\documentclass[12pt]{article}
\usepackage{amsmath,amsfonts,latexsym,mathrsfs, amsthm,amssymb}
\pagenumbering{arabic}
\makeatletter
  
  \@addtoreset{equation}{section}
\makeatother
\newtheorem{proposition}{Proposition}[section]
\newtheorem{theorem}{Theorem}[section]
\newtheorem{lemma}{Lemma}[section]
\newtheorem{remark}{Remark}[section]
\newtheorem{corollary}{Corollary}[section]
\newtheorem{definition}{Definition}[section]
\newtheorem{example}{Example}[section]
\usepackage{color}
\usepackage{ulem}

\date{ }

\begin{document}

\title{Stochastic optimal transport for the Langevin dynamics and its zero--mass limit
}

\author{
Toshio Mikami\thanks{Partially supported by JSPS KAKENHI Grant Number 24K06765.
}\\
Department of Mathematics, Tsuda University
}

\maketitle

\begin{abstract}
We introduce a stochastic optimal transport for the Langevin dynamics with positive mass and study its zero--mass limit.
The new aspect of this paper is that we only fix the initial and terminal probability distributions of the positions of particles under consideration, but not those of their velocities with Heisenberg's uncertainty principle in mind.
In the zero--mass limit, we show that the minimizer of our stochastic optimal transport is tight if and only if the initial momentum (=(mass)$\times$(velocity)) of a particle converges to zero.
We also show that the limit of a minimizer of our stochastic optimal transport is a minimizer of a standard stochastic optimal transport for continuous semimartingales.
\end{abstract} 

Keywords:  stochastic optimal transport, Langevin equation, zero--mass limit

AMS subject classifications:  93E20, 82C31, 49Q22

\section{Introduction}\label{sec:1}

The optimal mass transport theory plays a crucial role in many fields
e.g.,  information sciences and metric measure space
(see e.g., \cite{1-PC, 1-RR,1-V2} and the references therein).

For $d\ge 1$, let $\mathcal{P}(\mathbb{R}^d)$ denote the space of all Borel probability measures on $\mathbb{R}^d$ endowed with weak topology, and let
$$\mu_1(dx):=\mu(dx\times \mathbb{R}^d),\quad\mu_2(dy):=\mu(\mathbb{R}^d\times dy),\quad
\mu\in \mathcal{P}(\mathbb{R}^d\times \mathbb{R}^d),$$
$$\Pi(P,Q):=\{\mu\in \mathcal{P}(\mathbb{R}^d\times \mathbb{R}^d):
\mu_1=P,\mu_2=Q\},\quad P,Q\in \mathcal{P}(\mathbb{R}^d).$$
Let $AC([0,1];\mathbb{R}^d)$ and $P^{X}$ denote the space of all absolutely continuous functions from $[0,1]$ to $\mathbb{R}^d$ and the probability distribution of a random variable $X$ defined on a probability space, respectively.
In this paper, the probability space under consideration is not fixed.
We use the same notation $P$ for different probabilities when it is not confusing.

For $P_0,P_1\in \mathcal{P}(\mathbb{R}^d)$, the following is a typical example of the Monge--Kantorovich problem
that is a class of optimal mass transports:
\begin{equation}
T(P_0,P_1):=\inf\left\{E[|Y-X|^2]:P^X=P_0, P^Y=P_1\right\}.
\end{equation}
The following holds:
\begin{eqnarray}T(P_0,P_1)&=&\inf\left\{E\left[\int_0^1\left|\frac{d}{dt}X(t)\right|^2dt\right]:X(\cdot)\in AC([0,1];\mathbb{R}^d), {\rm a.s. }, \right.\label{0.1}\\
&&\qquad \left.P^{(X(0), X(1))}\in \Pi(P_0,P_1)\right\}.\nonumber
\end{eqnarray}
Indeed, for $\varphi (\cdot)\in AC([0,1];\mathbb{R}^d)$, by Jensen's inequality,
$$\int_0^1\left|\frac{d}{dt}\varphi (t)\right|^2dt\ge |\varphi (1)-\varphi (0)|^2,$$
where the equality holds if and only if 
$$\frac{d}{dt}\varphi (t)=\varphi (1)-\varphi (0),\quad dt{\rm -a.e.}$$ 
(see \cite{M09, M2021, 1-RR, 1-V2} and references therein).

If $T(P_0,P_1)$ is finite, then a minimizer of (\ref{0.1}) exists and satisfies the following:
\begin{equation}\label{1.2.0}
X(t)=X(0)+t\{X(1)-X(0)\},\quad 0\le t\le 1, \quad {\rm a.s.}.
\end{equation}
If, in addition, $P_0(dx)\ll dx$, then
there exists a convex function $\varphi$ on $\mathbb{R}^d$ such that 
\begin{equation}\label{1.3.0810}
X(1)=D\varphi(X(0)),\quad {\rm a.s.},
\end{equation}
where $D:=(\partial /\partial x_i)_{i=1}^d$.
In particular, $\{X(t)\}_{0\le t\le 1}$ is a measurable function of $(t,X(0))$
(see \cite{Bre1, Bre2,GangboMcCann,1-RR, 1-V2} and references therein).

The so-called Schr\"odinger's problem in the theory of stochastic processes 
was discussed in E. Schr\"odinger's papers \cite{S1, S2}.
It is the origin of stochastic optimal transport (SOT for short) (see \cite{M2021} and the references therein) and 
is also considered an entropic regularized optimal mass transport  in data science
nowadays (see \cite{1-PC} and the references therein).

Let
$$g(t,x):=\frac{1}{\sqrt{2\pi t}^d}\exp \left( -\frac{|x|^2}{2t}\right),\quad (t, x)\in (0,\infty )\times \mathbb{R}^d.$$
For Borel probability measures $\mu,\nu$ on a topological space $S$, let 
$$H(\mu\|\nu):=
\begin{cases}
\displaystyle \int_S \log\frac{d\mu}{d\nu}(x)d\mu (x),&\mu\ll\nu,\\
\infty, &\hbox{otherwise.}
\end{cases}
$$

For $P_0,P_1\in \mathcal{P}(\mathbb{R}^d)$, the following is a typical example of Schr\"odinger's problem:
\begin{eqnarray}\label{1.3.-1}
&\inf\left\{H(\mu(dx\hbox{ }dy)\|P_0(dx)g(1,y-x)dy):\mu\in \Pi(P_0,P_1)\right\}.
\end{eqnarray}
If there exists $\mu\in \Pi(P_0,P_1)$ and a Borel measurable function $q^\mu$ on  
$\mathbb{R}^d\times \mathbb{R}^d$ such that 
$$\mu (dx\hbox{ }dy)=q^\mu(x,y)P_0(dx)g(1,y-x)dy,$$
then $P_1(dy)$ has a density
$$p_1(y):=\int_{\mathbb{R}^d}q^\mu(x,y)P_0(dx)g(1,y-x).$$
In particular, (\ref{1.3.-1}) is equal to the following (see e.g., \cite{1-PC}):
\begin{eqnarray}
&&\inf\left\{E\left[\frac{|Y-X|^2}{2}\right]+H(P^{(X,Y)}\|P^{X}\times P^{Y}):
P^X=P_0, P^Y=P_1\right\}\label{1.6.0}\\
&&\qquad +\frac{d}{2}\log (2\pi )+\int_{\mathbb{R}^d}p_1(y)\log p_1(y)dy,\nonumber
\end{eqnarray} 
provided it is well defined.
The infimum in (\ref{1.6.0}) is called an entropic regularized optimal mass transport  in data science.
In information science, $H(P^{(X,Y)}\|P^{X}\times P^{Y})$
is called the mutual information of $X$ and $Y$ and is denoted by $I(X;Y)$ (see e.g., \cite{Cover}).

(\ref{1.3.-1}) is also equal to the following, which is an  SOT  analog of (\ref{0.1}):
\begin{eqnarray}
&&\inf\left\{H(P^{X(\cdot)}\|P^{X(0)+W(\cdot)}):dX(t)=u_X(t)dt +dW(t), \right.\nonumber\\
&&\qquad \left. P^{(X(0), X(1))}\in \Pi(P_0,P_1)\right\}\nonumber\\
&=&\inf\left\{E\left[\int_0^1\frac{1}{2}\left|u_X(t)\right|^2dt\right]:dX(t)=u_X(t)dt +dW(t), \right.\label{1.3.0}\\
&&\qquad \left. P^{(X(0), X(1))}\in \Pi(P_0,P_1)\right\},\nonumber
\end{eqnarray}
where $u_X(t)$ and $W(t)$ are, respectively, an $\mathbb{R}^d$--valued progressively measurable stochastic process
and an $\mathbb{R}^d$--valued  Brownian motion defined on the same filtered probability space
(see \cite{D91, F88}, \cite{J75}--\cite{MT06}, \cite{Z86}--\cite{Z86-3}
and the references therein).
 In this paper, we use the same notation $W$ for different  Brownian motions when it is not confusing.

Indeed, (\ref{1.3.-1}) is less than or equal to (\ref{1.3.0}) since if 
$$dX(t)=u_X(t)dt +dW(t),\quad dP^{X(\cdot)}\ll dP^{X(0)+W(\cdot)},$$
then 
\begin{eqnarray}
&&P^{(X(0),X(1))}(dxdy)\label{1.8.0}\\
&=&E\left[\frac{dP^{X(\cdot)}}{dP^{X(0)+W(\cdot)}}\biggl|(X(0), X(0)+W(1))=(x,y)\right]
P^{(X(0), X(0)+W(1))}(dxdy),\nonumber
\end{eqnarray}
and by Jensen's inequality,
\begin{eqnarray*}
&&H(P^{X(\cdot)}\|P^{X(0)+W(\cdot)})\\
&=&\int_{C([0,1];\mathbb{R}^d)}\left\{\frac{dP^{X(\cdot)}}{dP^{X(0)+W(\cdot)}} (\omega)\log\frac{dP^{X(\cdot)}}{dP^{X(0)+W(\cdot)}} (\omega)\right\}dP^{X(0)+W(\cdot)}(\omega)\\
&\ge& E\left[E\left[\frac{dP^{X(\cdot)}}{dP^{X(0)+W(\cdot)}}\biggl|(X(0), X(0)+W(1))\right]\right.\\
&&\qquad \times \left.\log \left(E\left[\frac{dP^{X(\cdot)}}{dP^{X(0)+W(\cdot)}}\biggl|(X(0), X(0)+W(1))\right]\right)\right]\\
&=&H(P^{(X(0),X(1))}\|P^{(X(0), X(0)+W(1))}).
\end{eqnarray*}
There exist functions $u,v$ on $\mathbb{R}^d$ such that 
$u(\cdot)+|\cdot|^2/2$ and $v(\cdot)+|\cdot|^2/2$ are convex on $\mathbb{R}^d$
and that
$$\mu_o (dx\hbox{ }dy):=\exp (-u(x)-v(y))g(1,y-x)P_0(dx)p_1(y)dy\in \Pi(P_0,P_1)$$
(see \cite{1-Jamison1974}).
The following also has a unique weak solution:
\begin{eqnarray*}
dX_o(t)&=&D_x \log \left(\int_{\mathbb{R}^d}g(1-t, y-X_o(t))\exp (-v(y))p_1(y)dy\right)dt +dW(t),\quad 0<t<1,\\
P^{X_o(0)}&=&P_0,
\end{eqnarray*}
and the following holds:
$$dP^{X_o(\cdot)}(\omega)=\exp (-u(\omega(0))-v(\omega(0)+\omega(1)))p_1(\omega(0)+\omega(1)) dP^{X_o(0)+W(\cdot)}(\omega)$$
(see \cite{J75}).
In particular, from (\ref{1.8.0}),
\begin{eqnarray*}
P^{(X_o(0),X_o(1))}&=&\mu_o,\\
H(P^{X_o(\cdot)}\|P^{X_o(0)+W(\cdot)})
&=&H(P^{(X_o(0),X_o(1))}\|P^{(X_o(0), X_o(0)+W(1))})\\
&=&H(\mu_o\|P_0(dx)g(1,y-x)dy).
\end{eqnarray*}
If (\ref{1.3.-1}) is finite, then $\mu_o$ is  a unique minimizer of (\ref{1.3.-1})
(see \cite{1-Jamison1974, 1-RT} and also \cite{M2021}),
which implies that (\ref{1.3.-1}) is greater than or equal to (\ref{1.3.0}) and $X_o$ is a unique minimizer of (\ref{1.3.0})
(see \cite{D91, F88, J75},
\cite{M08}--\cite{MT06}, \cite{Z86}--\cite{Z86-3} and the references therein).

\begin{remark}\label{rk1.1.0}
In \cite{M04}, we showed that if $P_0(dx)\ll dx$, and $P_0$ and $P_1$ have the second moments, then 
the zero--noise limit of $X_o$ exists, and the limit is a gradient of a convex function that satisfies 
(\ref{1.3.0810}).
It is a probabilistic proof of Monge's problem
(see \cite{Bre1,Bre2,L2,M09,M2021,1-MTsiam,1-PC,1-RR,1-V2} and the references therein
and also Remark \ref{rk1.3} given later).

\end{remark}

The Langevin equation is a generalization of the Newton equation of motion and describes the motion of a particle subject to friction and stochastic forcing.

Let  $k_B$ and $T$ denote Boltzmann's constant and the absolute temperature, respectively.
For $m, \gamma >0$ and a sufficiently smooth $U:\mathbb{R}^d\rightarrow\mathbb{R}$,
the following is a class of Langevin equations with positive mass: for $t\in (0,1)$,
\begin{equation}\label{0}
m\cdot d\left(\frac{d}{dt}X(t)\right)=\left\{-DU(X(t))-\gamma \left(\frac{d}{dt}X(t)\right)\right\}dt+\sqrt{2\gamma k_BT}dW(t)
\end{equation}
(see e.g., \cite{IW14} for the SDE).
Here $m$, $U$, and $\gamma$ denote 
the mass of a particle, an interaction potential function, and
the friction coefficient, respectively.
$\gamma dX(t)/dt$ and $\sqrt{2\gamma k_BT}dW(t)/dt$ are also 
a linear dissipation and a stochastic forcing, respectively
(see e.g., \cite{PAV}).
If $\gamma=0$, then (\ref{0}) is a class of Newton equations of motion.

In (\ref{0.1}) and (\ref{1.3.0}), $X(\cdot)\in AC([0,1];\mathbb{R}^d)$ and 
$X(\cdot)$ is a semimartingale, respectively.
Inspired by the Langevin equation, we are interested in studying  an SOT in the case where 
$X(\cdot)\in AC([0,1];\mathbb{R}^d)$ and  
 $dX(t)/dt$ is a semimartingale.
 
In this paper, we introduce an SOT for a class of Langevin dynamics with positive mass $m$ and study its zero--mass limit, i.e., the limit as $m\to 0$.
We consider the case where the initial and terminal probability distributions of the positions of  particles under consideration are fixed
and where the initial momentum converges to zero as $m\to 0$.
We emphasize that the initial velocity does not necessarily converge to zero as $m\to 0$.
Since we are interested in the zero--mass limit and since we assume that we know the initial and terminal probability distributions of the positions of particles,
we consider, with Heisenberg's uncertainty principle in mind, the case where we do not know the probability distributions of the momenta of particles.
Since the Langevin equation with positive mass is an SDE for the position and velocity of a particle, our problem is a new class of SOTs.
We refer readers to \cite{Birrell, FW-1, FW-2, Ishii} and the references therein for the zero--mass limit of the Langevin equation with a variable friction coefficient.


We describe our problem more precisely.
For notational simplicity, we consider the SDE for the position and momentum of a particle, instead of its velocity.
Let $\sigma:[0,1]\times \mathbb{R}^{2d}\rightarrow M(d,\mathbb{R})$
 be a bounded Borel measurable $d\times d$--matrix function.
For $m>0$, let $\mathcal{A}^m$ denote the set of $\mathbb{R}^{d}\times \mathbb{R}^{d}$--valued continuous semimartingales $\{Z(t)=(X(t),Y(t))\}_{0\le t\le 1}$ defined on a complete filtered probability space such that the following holds weakly:
\begin{eqnarray}
dX(t)&=&\frac{1}{m}Y(t)dt,\label{1.1}\\
dY(t)&=&\left\{u_X(t)-\frac{\gamma }{m}Y(t)\right\}dt+\sigma(t,Z(t))dW(t),\quad 0< t<1.\label{1.2}
\end{eqnarray}
Here  $\{u_X (t)\}_{0\le t\le 1}$ and $\{W(t)\}_{0\le t\le 1}$ are
a progressively measurable $\mathbb{R}^d$--valued stochastic process
and an $\mathbb{R}^d$--valued Brownian motion, respectively, defined on the same filtered probability space (see e.g., \cite{IW14}).
We omit the dependence of $Z\in \mathcal{A}^m$ on $m$ when it is not confusing.

\begin{remark}\label{rk1.1}
(i) For $m>0$, (\ref{1.1})--(\ref{1.2}) is equivalent to (\ref{1.1}) and the following:
\begin{eqnarray}\label{1.6}
m\cdot d\left(\frac{d}{dt}X(t)\right)&=&\left\{u_X(t)-\gamma \left(\frac{d}{dt}X(t)\right)\right\}dt\\
&&\qquad +\sigma\left(t,\left(X(t), m\frac{d}{dt}X(t)\right)\right)dW(t).\nonumber
\end{eqnarray}
(ii) For $\{Z(t)=(X(t),Y(t))\}_{0\le t\le 1}\in \mathcal{A}^m$,
$Y(t)$ is $\mathcal{F}^X_t:=\sigma [X(s);0\le s\le t ]$--measurable for $t\in (0,1]$, since
$$Y(t)=
m \times \lim_{h\downarrow 0}\frac{X(t)-X(t-h)}{h},\quad 0<t\le 1, \quad {\rm a.s..}
$$
In the same way, $Y(0)$ is $\mathcal{F}^X_{0+}:=\cap_{t>0}\mathcal{F}^X_t$--measurable.
\end{remark}

For $m>0$, $B\subset \mathcal{P}(\mathbb{R}^d)$, and $P,Q\in \mathcal{P}(\mathbb{R}^d)$, let
\begin{eqnarray*}
\mathcal{A}^m(B,P;Q)&:=&\{\{ Z(t)=(X(t),Y(t))\}_{0\le t\le 1}\in \mathcal{A}^m:
P^{(X(0), X(1))}\in \Pi(P,Q), P^{Y(0)}\in B\},\\
\mathcal{A}^m(P_0;P_1)&:=&\mathcal{A}^m(\mathcal{P}(\mathbb{R}^d),P_0;P_1).
\end{eqnarray*}
Let 
$$L:[0,1]\times \mathbb{R}^{2d} \times \mathbb{R}^d\longrightarrow [0,\infty )$$
be  Borel measurable.  
We also write
$$L(t,x,y;u):=L(t,z;u), \quad 
(t,u)\in [0,1]\times \mathbb{R}^d, z=(x,y)\in \mathbb{R}^d \times\mathbb{R}^d.$$

The following is an SOT for the Langevin dynamics when mass $m>0$.
\begin{definition}[SOT for the Langevin dynamics with positive mass\label{def1}]
For $m>0$, $B\subset \mathcal{P}(\mathbb{R}^d)$, and $P,Q\in \mathcal{P}(\mathbb{R}^d)$, let
\begin{eqnarray}
V^m(B, P_0;P_1)
&:=&\inf \left\{E\biggl[\int_0^1 L(t,Z(t);u_X (t))dt \biggr]:Z\in \mathcal{A}^m(B,P_0;P_1)\right\},\nonumber\\
\label{1.3}\\
V^m(P_0,P_1)&:=&V^m(\mathcal{P}(\mathbb{R}^d), P_0;P_1).\nonumber
\end{eqnarray}
If the set over which the infimum is taken is empty, we set the infimum to be infinite.
\end{definition}

If $0<m_n\to 0, n\to\infty$, then any sequence $\{Z_n=(X_n, Y_n)\in \mathcal{A}^{m_n}(P_0,P_1)\}_{n\ge 1}$
such that the initial momentum $Y_n(0)=m_n dX_n(t)/dt|_{t=0}$ does not converges to $0$ as $n\to\infty$ is not tight since
the following is not continuous in $t$:
\begin{equation}\label{1.14}
\lim_{n\to \infty}\exp\left(-\frac{\gamma t}{m_n}\right)
=
\begin{cases}
0,&t\in (0,1],\\
1, &t=0
\end{cases}
\end{equation}
(see (\ref{3.9}), (\ref{3.11}), and also (\ref{3.7}) for notation).
This is one of the reasons  we restrict a class of $Y(0)$ under consideration, when we study the zero--mass limit (see section \ref{sec:2} for more discussion).

\begin{remark}\label{rk1.3}
In (\ref{1.1})--(\ref{1.2}), if $\sigma(\cdot,\cdot)=0$,  then
$dX(\cdot)/dt\in AC([0,1];\mathbb{R}^d)$ and 
$$
m\frac{d^2}{dt^2}X(t)=u_X(t)-\gamma \frac{d}{dt}X(t).$$
In particular,  if 
$\sigma(\cdot,\cdot)=0$, then 
\begin{eqnarray}
\quad V^m(P_0,P_1)
&=&\inf \left\{E\biggl[\int_0^1 L\left(t,X(t), m\frac{d}{dt}X(t);\gamma \frac{d}{dt}X(t)+m\frac{d^2}{dt^2}X(t)\right)dt \biggr]:\right.\nonumber\\
\label{1.10}\\
&&\qquad\left.\frac{d}{dt}X(\cdot)\in AC([0,1];\mathbb{R}^d), {\rm a.s.}, 
P^{(X(0), X(1))}\in \Pi(P_0,P_1)\right\}.\nonumber
\end{eqnarray}
If $\sigma(\cdot,\cdot)=0$ and $L=|u|^2$, then for $m>0$,
\begin{equation}
V^m(P_0,P_1)\le \gamma^2 T(P_0,P_1),
\end{equation}
where the equality holds if we formally substitute $m=0$ in (\ref{1.10}) (see (\ref{0.1})--(\ref{1.2.0}) and also Remark \ref{rk1.1.0}).
Indeed,
\begin{eqnarray*}
&&\int_0^1 \left|\gamma \frac{d}{dt}X(t)+m\frac{d^2}{dt^2}X(t)\right|^2dt\\
&=&\int_0^1 \left\{\gamma^2\left| \frac{d}{dt}X(t)\right|^2+m^2\left| \frac{d^2}{dt^2}X(t)\right|^2\right\}dt+
\gamma  m\left(\left| \frac{d}{dt}X(1)\right|^2-\left| \frac{d}{dt}X(0)\right|^2\right).
\end{eqnarray*}
\end{remark}

We consider the case where $m=0$.
Substitute $m=0$ in 
(\ref{1.6}).
Then we formally obtain the following SDE:
\begin{eqnarray}
\gamma dX(t)&=& u_X(t)dt+\sigma_0(t,X(t))dW(t),\quad 0< t<1,\label{1.4}
\end{eqnarray}
where
$$\sigma_0(t,x):=\sigma (t,(x,0)), \quad (t,x,0)\in [0,1]\times \mathbb{R}^d\times \mathbb{R}^d.$$
We denote by $\mathcal{A}$ the set of $\mathbb{R}^{d}$--valued continuous semimartingales $\{X(t)\}_{0\le t\le 1}$ defined on a complete filtered probability space such that (\ref{1.4}) holds weakly.
Let 
\begin{eqnarray*}
L_0(t,x;u):&=&L(t,x,0;u), \quad (t,x,0,u)\in [0,1]\times \mathbb{R}^d \times\mathbb{R}^d \times \mathbb{R}^d,\\
\mathcal{A}(P,Q):&=&\{\{ X(t)\}_{0\le t\le 1}\in \mathcal{A}:P^{(X(0), X(1))}\in \Pi(P,Q)\},\quad P,Q\in \mathcal{P}(\mathbb{R}^d).
\end{eqnarray*}
The following is an SOT for the Langevin dynamics with mass $m=0$,
and is a well--known class of SOTs (see e.g., \cite{M08}--\cite{MT06} and the references therein).

\begin{definition}

\noindent
For $P_0,P_1\in \mathcal{P}(\mathbb{R}^d )$, let
\begin{equation}\label{1.12}
V^0(P_0,P_1)
:=\inf \left\{E\biggl[\int_0^1 L_0(t,X(t);u_X (t))dt \biggr]:X\in \mathcal{A}(P_0,P_1)\right\}.
\end{equation}
If the set over which the infimum is taken is empty, we set the infimum to be infinite.
\end{definition}

\begin{remark}
If $\sigma_0=0$ and $L=|u|^2$, then $V^0(P_0,P_1)=\gamma^2T(P_0,P_1)$
(see also Remarks \ref{rk1.1.0} and \ref{rk1.3}).

\end{remark}


For $m>0$ and a closed set $B\subset \mathcal{P}(\mathbb{R}^d)$, 
we show the existence of  minimizers of $V^m(B,P_0;P_1)$ when it is finite.
Let $\delta_0(dy)$ denote the delta measure on $\{0\}\subset \mathbb{R}^d$.
We also show that if $D^m\subset \mathcal{P}(\mathbb{R}^d)$ ``converges'' to $\delta_0$
as $m\to 0$, then
$V^{m}(D^m,P_0;P_1)$ and its minimizer converge to $V^0(P_0,P_1)$ and its minimizer as $m\to 0$ 
(see section \ref{sec:2} for an exact meaning of ``converge''). 

We will study the duality formula for $V^m(B,P_0;P_1)$ for $m>0$ somewhere else
(see Propositions \ref{pp2.2}--\ref{pp2.3} and Example \ref{ex2.1} in section \ref{sec:2} for some discussion).

In section \ref{sec:2}, we state our results.
In section \ref{sec:3}, we give technical lemmas.
In section \ref{sec:4}, we prove our results.


\section{Main results}\label{sec:2}

In this section, we state our results.
We discuss the zero--mass limit of the SOTs when $L(t,z;u)$ is not necessarily of polynomial growth in $u$ and when $L(t,z;u)$ is of polynomial growth in $u$.
We also discuss the duality formula for the SOT.

We state all assumptions before we state our results.
Let 
\begin{equation}
C_{r,R}:=\inf\left \{\frac{L(t,z;u)}{|u|^r}: t\in [0,1], z\in \mathbb{R}^{2d},u \in \mathbb{R}^d, |u|\ge R\right\},\quad r\ge 1, R>0,
\end{equation}
\begin{equation}
R_1(t,z;u)=R_1(L)(t,z;u):=L(t,z;u)-L(t,z;0),\quad (t,z,u)\in [0,1]\times \mathbb{R}^{2d}\times\mathbb{R}^d.
\end{equation}
For $\varepsilon_1\ge 0,\varepsilon_2\in (0,\infty]$, let
$$\Delta L(\varepsilon_1,\varepsilon_2):=\sup \frac{L(t_1,z_1;u)-L(t_2,z_2;u)}{1+L(t_2,z_2;u)},$$
where the supremum is taken over all $u\in \mathbb{R}^d $ and 
all $(t_1,z_1)$ and  $(t_2,z_2)\in [0,1]\times \mathbb{R}^{2d}$
for which $|t_1-t_2|\le\varepsilon_1$, $|z_1-z_2|< \varepsilon_2$.

We describe our assumptions.

\noindent
(A1)
(i) $\sigma:[0,1]\times\mathbb{R}^{2d}\rightarrow M(d,\mathbb{R})$ is bounded and continuous.

\noindent
(ii) 
\begin{equation}
\lim_{R\to\infty}C_{1,R}=\infty.
\end{equation}

\noindent
(iii)
$L:[0,1]\times\mathbb{R}^{2d}\times\mathbb{R}^d\rightarrow[0,\infty)$ is lower semicontinuous.

\noindent
(iv) For $(t,z)\in [0,1]\times\mathbb{R}^{2d}$, $L(t,z;\cdot)$ is convex.

\noindent
(A2) (i) $\sigma$ is a $d\times d$--identity matrix.

\noindent 
(ii) $L:[0,1]\times\mathbb{R}^{2d}\times\mathbb{R}^d
\rightarrow[0,\infty)$ is continuous.

\noindent
(iii) 
\begin{equation}\label{2.2}
R_1(t,z;ru)\le r^2R_1(t,z;u),\quad  t\in [0,1], z\in \mathbb{R}^{2d},u\in \mathbb{R}^d, 0<r<1.
\end{equation}
(iv) There exists $\varepsilon_0>0$ such that $\Delta L(\varepsilon_0,\infty)$ is finite.

\noindent
(A3)
(i) $\Delta L(\varepsilon_1,\varepsilon_2)\to
0$, as $\varepsilon_1,$ $\varepsilon_2\to 0$.

\noindent 
(ii)  For $(t,z)\in [0,1]\times\mathbb{R}^{2d}$, $L(t,z;\cdot)$ is strictly convex.

\noindent 
(iii) 
\begin{equation}
\lim_{R\to\infty}C_{2,R}>0.
\end{equation}

\noindent
(A4) 
(i) There exists $C>0$ and $r_0\ge 1$ such that the following holds:
\begin{equation}\label{2.5.0}
L(t,z;u+v)\le L(t,z;u)+C|v|(|u|^{r_0-1}+|v|^{r_0-1}), \quad  t\in [0,1], z\in \mathbb{R}^{2d},u,v \in \mathbb{R}^d.
\end{equation}
(ii)
\begin{equation}\label{2.7.0}
\lim_{R\to\infty}C_{r_0,R}>0.
\end{equation}

\begin{remark}\label{rk2.1}
(i) (A1, iv) and (A2, iii) imply that $D_uL (t,z;0)$ exists and is equal to $0$.
Indeed, from (\ref{2.2}), for any subgradient $p_0\in \partial_u L (t,z;0)$, 
$$\langle p_0,rp_0\rangle \le L (t,z;rp_0)-L (t,z;0)\le r^2 \{L (t,z;p_0)-L (t,z;0)\},\quad 0<r<1$$
(see e.g., \cite{1-V2} for subgradient and subdifferential).\\
(ii) The assumptions of Corollary \ref{thm2.2}
 are (A1, ii) and  (A2).
The following function satisfies (A1, ii)  and (A2, ii--iv), but not  (A1, iv): for $p\in (0,2)$, 
$$L=L(u)=|u|^2-|u|^p+1,\quad u\in  \mathbb{R}^d.$$
Indeed, $L (0)=1$ is a local maximum since $|u|^2<|u|^p$ if $0<|u|<1$.\\
(iii) (A2, ii, iv) implies that for $u\in \mathbb{R}^{d}$, $L (\cdot,\cdot;u)\in C_b ([0,1]\times\mathbb{R}^{2d})$
since 
$$L (t,z;u)\le L (t,0;u)+\triangle L(0,\infty)(1+L (t,0;u)),\quad (t, z)\in [0,1]\times \mathbb{R}^{2d}.$$
(iv) (A4) implies that $u\mapsto L(t,z;u)$ grows up of order $|u|^{r_0}$, as $|u|\to\infty$.
Indeed, substituting $u=0$ in (\ref{2.5.0}), the following holds:
$$
L(t,z;v)\le L(t,z;0)+C|v|^{r_0}, \quad  t\in [0,1], z\in \mathbb{R}^{2d},v \in \mathbb{R}^d.
$$
From (\ref{2.7.0}), for sufficientll large $R>0$,
$$0<C_{r_0,R}|v|^{r_0}\le L(t,z;v),\quad  t\in [0,1], z\in \mathbb{R}^{2d},
|v|\ge R.$$
$L=|u|^{r_0}$ satisfies (A4). Indeed, for $u,v \in \mathbb{R}^d$,
$$
|u+v|^{r_0}\le (|u|+|v|)^{r_0}\le  |u|^{r_0}+r_0|v|(|u|+|v|)^{r_0-1},
$$
since $[0,\infty)\ni x\mapsto (|u|+x)^{r_0}$ is convex if $r_0\ge 1$.
$$(|u|+|v|)^{r_0-1}\le 
\begin{cases}
2^{r_0-2}(|u|^{r_0-1}+|v|^{r_0-1}),&\quad r_0\ge 2,\\
|u|^{r_0-1}+|v|^{r_0-1},&\quad 1\le r_0<2.
\end{cases}
$$
since $[0,\infty)\ni x\mapsto x^{r_0-1}$ is convex if $r_0\ge 2$,
and since $[0,\infty)\ni x\mapsto (x+|v|)^{r_0-1}-x^{r_0-1}$ is nonincreasing if $1\le r_0<2$.\\
(v) Let $U\in UC_b ([0,1]\times \mathbb{R}^{2d};[0,\infty))$.
Let also $\{a_n,p_n\}_{n\ge 1}$ 
such that $a_n>0$ for at least one $n\ge 1$, that $a_n\ge 0, 2\le p_n<p_{n+1},  n\ge 1$, and such that 
 $$L_1(u):=\sum_{n=1}^\infty a_n |u|^{p_n}<\infty, \quad u\in\mathbb{R}^d.$$
The following is an example of $L$ that satisfies (A1, ii), (A2, ii--iv), and (A3): 
$$L (t,z;u)=L_1(u)+U(t,z),\quad (t, z,u)\in  [0,1]\times  \mathbb{R}^{2d}\times\mathbb{R}^d.$$
If there exists $n_0$ such that $a_n=0, n\ge n_0$, then
$L$ given above also satisfies  (A4).
\end{remark}

\subsection{Zero--mass limit: general cost function}

In this section, we do not assume that $u\mapsto L(t,z;u)$ is of polynomial growth and discuss the zero--mass limit of SOTs.

Modifying the idea in \cite{M08,M21} (see also \cite{M2021}), the following holds.

\begin{theorem}\label{pp1.1}
Suppose that (A1, i, ii) holds and that $m>0$.
Then for any closed set $B\subset \mathcal{P}(\mathbb{R}^d)$, 
any  $P_0,P_1\in\mathcal{P}(\mathbb{R}^d)$, and for any $\{Z_n=(X_n, Y_n)\}_{n\ge 1}\subset \mathcal{A}^m(B,P_0;P_1)$ such that 
\begin{equation}\label{2.4}
\sup_{n\ge 1}E\biggl[\int_0^1 L(t,Z_n(t);u_{X_n} (t))dt \biggr]<\infty,
\end{equation}
$$\{Q_n(dt\hbox{ }dz\hbox{ }du):=dtP^{(Z_n(t),u_{X_n} (t))}(dz\hbox{ }du)\}_{n\ge 1}$$ 
and $\{Z_n\}_{n\ge 1}$ are tight.
For any weak limit point $Q_\infty(dt\hbox{ }dz\hbox{ }du)$ of $\{Q_n(dt\hbox{ }dz\hbox{ }du)\}_{n\ge 1}$,
there exists $Z=(X,Y)\in \mathcal{A}^m(B,P_0;P_1)$ such that
\begin{eqnarray}
u_X(t)&=&E^{Q_\infty}[u|t,Z(t)],\quad dtdP{\rm -a.e.},\label{2.5}\\
Q_\infty(dt\hbox{ }dz\times \mathbb{R}^d)&=&dtP^{Z(t)}(dz),\label{2.6}
\end{eqnarray}
where $E^{Q_\infty}[u|t,z]$ denotes a conditional expectation of $u$ given $(t,z)$ under $Q_\infty$.\\
Suppose, in addition,  that (A1, iii, iv) holds. Then
there exist a weak limit point $Q_\infty$ of $\{Q_n\}_{n\ge 1}$ and $Z=(X,Y)\in \mathcal{A}^m(B,P_0;P_1)$ such that (\ref{2.5})--(\ref{2.6}) hold and such that 
\begin{equation}
\liminf_{n\to\infty}E\biggl[\int_0^1 L(t,Z_n(t);u_{X_n} (t))dt \biggr]
\ge E\biggl[\int_0^1 L(t,Z(t);u_{X} (t))dt \biggr].\label{1.9.1}
\end{equation}
In particular, $V^m(B,P_0;P_1)$ has a minimizer  $Z\in \mathcal{A}^m(B,P_0;P_1)$ such that (\ref{2.5})--(\ref{2.6}) hold 
for some $Q_\infty\in \mathcal{P}([0,1]\times \mathbb{R}^{2d}\times \mathbb{R}^{d})$, provided it is finite.
\end{theorem}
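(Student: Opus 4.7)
The plan is to adapt the Young-measure / relaxed-control framework from the authors' earlier work \cite{M08, M21, M2021}: encode the joint statistics of state and control in $Q_n$, extract a tight subsequence, and identify the weak limit as a controlled Langevin process via a mimicking argument.

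First I would establish tightness of both $\{Q_n\}$ and $\{Z_n\}$. Tightness of $Q_n$ in the control variable $u$ follows from the coercivity in (A1, ii) together with the uniform bound \eqref{2.4}: if $|u|\ge R$ then $L(t,z;u)\ge C_{1,R}|u|\ge C_{1,R}R$, so Chebyshev gives $Q_n(\{|u|\ge R\})\le K/(C_{1,R}R)\to 0$ uniformly in $n$. For tightness of $\{Z_n\}$ in $C([0,1];\mathbb{R}^{2d})$ I would solve the linear SDE \eqref{1.2} explicitly, obtaining
$$
Y_n(t)=e^{-\gamma t/m}Y_n(0)+\int_0^t e^{-\gamma(t-s)/m}u_{X_n}(s)\,ds+\int_0^t e^{-\gamma(t-s)/m}\sigma(s,Z_n(s))\,dW(s),
$$
substitute into \eqref{1.1} integrated up to $t=1$, and invert the resulting affine relation to express $Y_n(0)$ linearly in $X_n(0)$, $X_n(1)$, $u_{X_n}$, and a stochastic integral. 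Since $P^{X_n(0)}=P_0$ and $P^{X_n(1)}=P_1$ are fixed, $\sigma$ is bounded by (A1, i), and $u_{X_n}$ is $L^1$-bounded by \eqref{2.4} together with the coercivity, one deduces tightness of $\{Y_n(0)\}$ and then of the full path laws via Kolmogorov's criterion. Closedness of $B$ preserves $P^{Y(0)}\in B$ under weak limits, since evaluation at $t=0$ is continuous on $C([0,1];\mathbb{R}^{2d})$.

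Given a weak limit $Q_n\to Q_\infty$ and a jointly convergent subsequence $Z_n\to Z$ in $C([0,1];\mathbb{R}^{2d})$, I would invoke Skorohod's representation to reduce to almost-sure convergence on a common probability space. Disintegrating
$$
Q_\infty(dt\,dz\,du)=dt\,P^{Z(t)}(dz)\,q_t(z;du)
$$
and setting $u_X(t):=\int_{\mathbb{R}^d}u\,q_t(Z(t);du)=E^{Q_\infty}[u\mid t,Z(t)]$ makes \eqref{2.5} and \eqref{2.6} automatic by construction; the remaining task is to verify that $Z=(X,Y)$ satisfies \eqref{1.1}--\eqref{1.2} weakly with this drift. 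The kinematic constraint \eqref{1.1} passes to the limit pathwise because $X_n(t)=X_n(0)+(1/m)\int_0^t Y_n(s)\,ds$ is preserved under uniform convergence. For \eqref{1.2} I would verify the associated martingale problem against $C^2_b$ test functions, using continuity and boundedness of $\sigma$ from (A1, i); weak convergence of integrals of the form $\int_0^t f(s,Z_n(s))u_{X_n}(s)\,ds$ to $\int_0^t f(s,Z(s))u_X(s)\,ds$ is exactly what the Young-measure formulation provides, since these are continuous linear functionals of $Q_n$. Identifying the mimicked drift $u_X$ as a progressively measurable process driving a bona fide Brownian motion in the filtration of the limit is the stage I expect to be the main obstacle, and is essentially a martingale-problem / Stroock--Varadhan-style argument.

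Finally, under the additional assumptions (A1, iii, iv), lower semicontinuity of $L$ combined with $Q_n\to Q_\infty$ yields
$$
\liminf_{n\to\infty}E\!\left[\int_0^1 L(t,Z_n(t);u_{X_n}(t))\,dt\right]=\liminf_{n\to\infty}\int L\,dQ_n\ge \int L\,dQ_\infty,
$$
via the portmanteau lemma for nonnegative lower semicontinuous integrands. Convexity of $L(t,z;\cdot)$ in (A1, iv) and Jensen's inequality applied to the kernel $q_t(z;du)$ then give
$$
\int L\,dQ_\infty\ge \int_0^1 E\!\left[L\bigl(t,Z(t);u_X(t)\bigr)\right]dt,
$$
establishing \eqref{1.9.1}. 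Applying the whole argument to a minimizing sequence for $V^m(B,P_0;P_1)$ yields the desired minimizer.
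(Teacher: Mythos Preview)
Your overall architecture (tightness via coercivity and the explicit variation-of-constants formula, Young-measure encoding, Skorokhod, then lower semicontinuity plus Jensen) matches the paper. The gap is at the identification step. You take the weak \emph{path} limit $Z$ of $\{Z_n\}$ and propose to verify directly that $Z$ solves the martingale problem with the Markovian drift $u_X(t)=E^{Q_\infty}[u\mid t,Z(t)]$. But the Young measures $Q_n$ record only the one-time laws of $(Z_n(t),u_{X_n}(t))$; they do not carry the joint law of $u_{X_n}(r)$ with the path history $(Z_n(s_1),\dots,Z_n(s_k))$ that you must test against to pass the martingale property to the limit. In general the path limit $Z$ is a semimartingale whose drift may be genuinely path-dependent, and there is no reason its Markovian projection should coincide with $E^{Q_\infty}[u\mid t,z]$ a priori. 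So your sentence ``these are continuous linear functionals of $Q_n$'' is correct only after taking expectations, i.e.\ at the level of the Fokker--Planck equation, not at the level of the martingale problem.

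The paper resolves this by abandoning the path limit for the SDE identification. From $Q_n\to Q_\infty$ one obtains (only) the Fokker--Planck equation for the time-marginals $P^{Z_\infty(t)}$ with drift $E^{Q_\infty}[u\mid t,z]$; then the Bogachev--R\"ockner--Shaposhnikov superposition principle \cite{Bog21} furnishes a \emph{new} process $Z\in\mathcal{A}^m$ with that Markovian drift and the same one-time marginals $P^{Z(t)}=P^{Z_\infty(t)}$. The constraints $P^{X(i)}=P_i$ and $P^{Y(0)}\in B$ then follow from matching marginals at $t=0,1$, and the lower-semicontinuity step is exactly your Fatou/Jensen argument. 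In short: replace your ``Stroock--Varadhan-style'' step by ``Fokker--Planck in the limit $+$ superposition principle'', and note that the $Z$ in the statement is this superposition process, not the weak path limit.
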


For $m>0$ and $f:
[0,1]\rightarrow (\mathbb{R}\cup\{\infty\})^d$, let 
\begin{eqnarray}\label{2.9}
\Psi^{m} (f)(t):&=&
\int_0^t \frac{\gamma}{m}\exp\left(-\frac{\gamma (t-s)}{m} \right)f(s)ds, 
\end{eqnarray}
for $t\in [0,1]$ such that the r. h. s. of (\ref{2.9}) is well defined.

For $m>0$ and $Z=(X,Y)\in \mathcal{A}^m$, let
\begin{eqnarray}
U_X(t):&=&\int_0^t u_X(s)ds,\quad 0\le t\le1,\label{2.7}\\
M_{X}(t):&=&\int_0^t\sigma(s,Z(s))dW(s),\quad 0\le t\le1\label{2.8}
\end{eqnarray}
 (see (\ref{1.2}) for notation and also Remark \ref{rk1.1}).

For $X\in \mathcal{A}$, we also use notations which are similar to (\ref{2.7})--(\ref{2.8}) when it is not confusing (see (\ref{1.4}) for notation).
In particular, $M_{X}$ also denotes a martingale part of $X\in \mathcal{A}$ for which $\sigma(s,Z(s))$ is replaced by $\sigma_0(s,X(s))$ in (\ref{2.8}).

For $m>0$ and $t\in (0,1]$, $\Psi^{m} $ is Lipschitz continuous on $C([0,t];\mathbb{R}^d)$ (see Lemma \ref{lm3.1}) and the  following holds from (\ref{1.1})--(\ref{1.2}) (see Lemma \ref{lm3.2}): for 
 $Z=(X,Y)\in \mathcal{A}^m$ and $t\in [0,1]$,
\begin{eqnarray}
X(t)&=&X(0)+\frac{1}{\gamma}\Psi^{m} (U_X+M_X)(t)
+\frac{1}{\gamma}\left\{1-\exp \left(-\frac{\gamma}{m}t\right)\right\}Y(0), \quad \\
Y(t)&=&U_X(t)+M_X(t)-\Psi^{m} (U_X+M_X)(t)+\exp \left(-\frac{\gamma}{m}t\right)Y(0).
\end{eqnarray}
In particular, $(X(0), Y(0), U_X+M_X)\mapsto Z$  is Lipschitz continuous.

Let $d_{wk}$ denote a metric that induces the topology by weak convergence in $\mathcal{P}(\mathbb{R}^d)$, e.g., Prohorov metric.
The following also holds.

\begin{theorem}\label {thm2.1}
Suppose that (A1, i, ii) holds and that $\{m_n\}_{n\ge 1}$ is a sequence of positive real numbers 
that converges to $0$ as $n\to\infty$.
Then for any $\{B_n\}_{n\ge 1}\subset \mathcal{P}(\mathbb{R}^d)$ such that 
$$\lim_{n\to\infty}\left(\sup\{d_{wk}
(\delta_0, P): P\in B_n\}\right)=0,$$
any $P_0,P_1\in\mathcal{P}(\mathbb{R}^d)$, 
and for any $\{Z_n=(X_n, Y_n)\in \mathcal{A}^{m_n}(B_n,P_0;P_1)\}_{n\ge 1}$ such that 
(\ref{2.4}) holds,
$\{Q_n(dt\hbox{ }dz\hbox{ }du):=dtP^{(Z_n(t),u_{X_n} (t))}(dz\hbox{ }du)\}_{n\ge 1}$ and $\{Z_n\}_{n\ge 1}$ are tight.
For any weak limit point $\overline Q_\infty(dt\hbox{ }dz\hbox{ }du)$ of $\{Q_n(dt\hbox{ }dz\hbox{ }du)\}_{n\ge 1}$,
there exists $X\in \mathcal{A}(P_0,P_1)$ such that
\begin{eqnarray}
u_X(t)&=&E^{\overline Q_\infty}[u|t,X(t)],\quad dtdP{\rm -a.e.},\label{2.13} 
\\
\overline Q_\infty(dt\hbox{ }dz\times \mathbb{R}^d)&=&dtP^{X(t)}(dx)\delta_0(dy).\label{2.14}
\end{eqnarray}
Suppose, in addition,  that (A1, iii, iv) holds. Then
\begin{equation}\label{2.15}
\liminf_{n\to \infty }V^{m_n}(B_n,P_0;P_1)\ge V^0(P_0,P_1).
\end{equation}
\end{theorem}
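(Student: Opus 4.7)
The plan is to adapt the proof of Theorem \ref{pp1.1} to the $m_n\to 0$ regime, using the representation formulas for $X_n$ and $Y_n$ in terms of $\Psi^{m_n}$ and carefully tracking the vanishing of the velocity component. Throughout we exploit that $P^{Y_n(0)}\in B_n$ with $d_{wk}(\delta_0,B_n)\to 0$ forces $Y_n(0)\to 0$ in probability, together with the uniform cost bound (\ref{2.4}).

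\textbf{Tightness.} For $\{Q_n\}$, the $t$-marginal is trivially tight on $[0,1]$, and the $u$-marginal is uniformly integrable by the superlinearity in (A1, ii): for large $R$,
\[\int_{|u|\ge R}|u|\,Q_n(dt\,dz\,du)\le \frac{1}{C_{1,R}}\int L(t,z;u)\,Q_n(dt\,dz\,du),\]
which vanishes uniformly in $n$ as $R\to\infty$. For the $z$-marginal, the variation-of-parameters identity
\[Y_n(t)=e^{-\gamma t/m_n}Y_n(0)+\int_0^t e^{-\gamma(t-s)/m_n}u_{X_n}(s)ds+\int_0^t e^{-\gamma(t-s)/m_n}\sigma(s,Z_n(s))dW(s),\]
together with the boundedness of $\sigma$ and the uniform $L^1$ control on $u_{X_n}$ (from superlinearity and (\ref{2.4})), shows that $Y_n(t)$ stays in a tight family uniformly in $n,t$; tightness of $X_n(t)$ then follows from the representation $X_n=X_n(0)+(1/\gamma)\Psi^{m_n}(U_{X_n}+M_{X_n})+(1/\gamma)(1-e^{-\gamma t/m_n})Y_n(0)$. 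Process-level tightness of $\{Z_n\}$ in $C([0,1];\mathbb{R}^{2d})$ then comes from the Lipschitz continuity of $\Psi^{m_n}$ (Lemma \ref{lm3.1}) and standard modulus-of-continuity estimates on $U_{X_n}$ and $M_{X_n}$.

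\textbf{Identification of the limit.} This is where the main difficulty lies. Integrating (\ref{1.1})--(\ref{1.2}) yields the \emph{mass-independent} identity
\[\gamma(X_n(t)-X_n(0))=U_{X_n}(t)+M_{X_n}(t)-(Y_n(t)-Y_n(0)),\]
and combining this with an It\^o isometry on the stochastic-integral part of $Y_n$ gives $\int_0^1 E[|Y_n(t)|^2\wedge 1]\,dt\to 0$, so $Y_n\to 0$ in $(dt\otimes dP)$-probability. Consequently, the $y$-marginal of any weak limit $\overline Q_\infty$ of $\{Q_n\}$ is $\delta_0$, yielding (\ref{2.14}). Passing to the limit along a convergent subsequence (Skorohod representation) in the identity above, one obtains a continuous semimartingale $X$ with $\gamma(X(t)-X(0))=\int_0^t\bar u(s)ds+M_X(t)$, where the martingale part is $\int_0^t\sigma_0(s,X(s))dW(s)$ by continuity of $\sigma$ and $Y_n\to 0$, and where $\bar u(t)=E^{\overline Q_\infty}[u\,|\,t,X(t)]$ via the same disintegration-and-conditioning argument used in the proof of Theorem \ref{pp1.1}. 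Thus $X\in\mathcal{A}(P_0,P_1)$ and (\ref{2.13}) holds. The delicate point is the joint convergence required to identify the drift as a conditional expectation against the limiting random driving noise; as in Theorem \ref{pp1.1} this uses convexity of $L$ in $u$ to justify the passage without fixing a stronger mode of convergence for $u_{X_n}$.

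\textbf{Lower semicontinuity.} For (\ref{2.15}) take an almost-minimizing sequence $\{Z_n\}\subset\mathcal{A}^{m_n}(B_n,P_0;P_1)$ satisfying (\ref{2.4}). The lower semicontinuity and nonnegativity of $L$ in (A1, iii) give
\[\liminf_{n\to\infty}E\Bigl[\int_0^1 L(t,Z_n(t);u_{X_n}(t))dt\Bigr]\ge\int L(t,z;u)\,\overline Q_\infty(dt\,dz\,du),\]
and disintegrating $\overline Q_\infty=dt\,P^{X(t)}(dx)\delta_0(dy)K(du\,|\,t,x,0)$ by (\ref{2.14}), then applying Jensen's inequality via convexity in $u$ (A1, iv) together with (\ref{2.13}), yields
\[\int L(t,z;u)\,\overline Q_\infty(dt\,dz\,du)\ge E\Bigl[\int_0^1 L_0(t,X(t);u_X(t))dt\Bigr]\ge V^0(P_0,P_1),\]
completing the proof.
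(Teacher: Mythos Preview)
Your strategy matches the paper's: tightness from the cost bound, $Y_n\to 0$ from the variation-of-constants formula, the mass-independent identity $\gamma(X_n-X_n(0))=U_{X_n}+M_{X_n}-(Y_n-Y_n(0))$ to pass to an $x$-only limiting equation, the superposition principle to produce $X\in\mathcal{A}(P_0,P_1)$, and Fatou plus Jensen for (\ref{2.15}). Two places deserve more care, though.

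First, your $Y_n\to 0$ argument is stated incompletely: the It\^o isometry only kills the stochastic-integral part of $Y_n$. The drift part $\int_0^t e^{-\gamma(t-s)/m_n}u_{X_n}(s)\,ds$ needs its own estimate; Fubini gives $\int_0^1\!\int_0^t e^{-\gamma(t-s)/m_n}|u_{X_n}(s)|\,ds\,dt\le (m_n/\gamma)\int_0^1|u_{X_n}(s)|\,ds$, which combined with the uniform $L^1$ bound on $u_{X_n}$ does the job. The paper instead proves the stronger $\|Y_n\|_\infty\to 0$ in probability (Lemma \ref{lm3.4}) via Skorohod and the estimate $\|\Psi^{m}(f)-f\|_\infty\to 0$ from Lemma \ref{lm3.1}; your $(dt\otimes dP)$-convergence suffices for the argument but is a genuine (minor) simplification.

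Second, you cannot simply ``repeat the disintegration-and-conditioning argument of Theorem \ref{pp1.1}'': there the Fokker--Planck equation for $P^{Z_n(t)}$ is obtained from $\mathcal{L}^{m}_{t,z,u}$, which for $m=m_n\to 0$ contains $1/m_n$ terms that blow up. The paper's fix---and this is exactly the content of your mass-independent identity---is to apply It\^o's formula to $f(t,\overline X_n(t))$ with $\overline X_n:=X_n(0)+\gamma^{-1}(U_{X_n}+M_{X_n})$, obtaining a clean $x$-only weak equation (equation (\ref{4.7}) in the paper) with no $m_n$ in it; then one replaces $\overline X_n$ by $X_n$ using $X_n-\overline X_n=\gamma^{-1}(Y_n(0)-Y_n)\to 0$ and $a(t,Z_n)\to a(t,(X,0))$, and finally invokes the superposition principle on the limiting equation. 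Your sketch has the right object but blurs the logical order: the $X$ produced by the superposition principle is an SDE solution with $u_X(t)=E^{\overline Q_\infty}[u\mid t,X(t)]$ and the correct one-time marginals, not a priori the Skorohod limit of $X_n$.
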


For $m>0$ and $t\in [0,1]$, let
\begin{eqnarray}
K^m(t):&=&\frac{1}{\gamma}\Psi^{m} (1)(t)=\frac{1}{\gamma}
\left(1- \exp\left(-\frac{\gamma t}{m}\right)\right),\label{3.7}\\
f^m(t):&=&\gamma K^m (1-t)=1- \exp\left(-\frac{\gamma (1-t)}{m}\right),\label{2.12}\\
\varphi^m(t):&=&1-\int_t^1 f^m(s)^2ds.\label{3.27}
\end{eqnarray}

For $P_0,P_1\in\mathcal{P}(\mathbb{R}^d)$ and $X\in \mathcal{A}(P_0,P_1)$, let $u_X$ and  $W$ satisfy (\ref{1.4}).
For $m>0$, let $W^m$ be a Brownian motion such that 
\begin{equation}\label{2.24.00}
W(\varphi^m(t))-W(\varphi^m(0))=\int_0^t f^m(s)dW^m(s),\quad t\in [0,1]
\end{equation}
by the martingale representation theorem (see e.g., \cite{IW14}).
Then, under (A2, i), the following holds from (\ref{2.12}): for $t\in [0,1]$,
\begin{eqnarray}\label{3.23}
X(\varphi^m(t))-X(\varphi^m(0))
&=&\int_{0}^{t}K^m(1-s)(u_X^m(s)f^m(s)ds +d W^m(s)),\qquad
\end{eqnarray}
where 
\begin{equation}\label{2.26.00}
u_X^m(t):=u_X(\varphi^m(t)),\quad 0\le t\le 1.
\end{equation}
We define $Z^m=(X^m, Y^m)\in \mathcal{A}^{m}$ by the following: for $t\in [0,1]$,
\begin{eqnarray}
X^m(t)&:=&X(0)+\int_0^t \frac{1}{m}Y^m(s)ds, \label{2.21}\\
Y^m(t)&=&\frac{X(\varphi^m(0))-X(0)}{K^m(1)}
+\int_0^t \left\{u_X^m(s)f^m(s)- \frac{\gamma}{m}Y^m(s)\right\}ds+W^m(t).\nonumber\\
\label{2.24}
\end{eqnarray}
The following implies that $Z^m=(X^m, Y^m)$ converges to $(X, 0)$ as $m\to 0$.

\begin{theorem}\label{thm2.3}
Suppose that (A2, i) holds and that $m>0$.
Then  for any $P_0,P_1\in\mathcal{P}(\mathbb{R}^d)$, 
any $X\in\mathcal{A}(P_0,P_1)$, and for $Z^m=(X^m, Y^m)\in \mathcal{A}^{m}$ defined by (\ref{2.21})--(\ref{2.24}), the following holds:
\begin{equation}\label{2.29.00}
X^m(1)=X(1).
\end{equation}
In particular, $Z^m=(X^m, Y^m)\in \mathcal{A}^{m}(P_0,P_1)$.
Besides, 
$(X^m, Y^m)$ converges to $(X, 0)$, as $m\to 0$, locally uniformly on $[0,1)$, a.s..\\
Suppose, in addition, that (A2, ii--iv) holds and that
\begin{equation}
E\left[\int_0^1 L_0(t,X(t);u_X(t))dt\right]<\infty.
\end{equation}
Then 
\begin{equation}\label{2.29}
\sup_{m\in (0, \varepsilon_0\gamma/2]}E\left[\int_0^1 L(t,Z^m(t);u_X^m(t)f^m(t))dt\right]<\infty,
\end{equation}
and 
\begin{equation}\label{2.30}
\limsup_{m\to 0}E\left[\int_0^1 L(t,Z^m(t);u_X^m(t)f^m(t))dt\right]\le E\left[\int_0^1 L_0(t,X(t);u_X(t))dt\right].
\end{equation}
\end{theorem}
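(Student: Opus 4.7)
The plan is to establish the three conclusions in sequence, using the Duhamel representation displayed just before the theorem: for $Z\in\mathcal A^m$, $X(t)=X(0)+\tfrac{1}{\gamma}\Psi^m(U_X+M_X)(t)+K^m(t)Y(0)$. Applying this to $Z^m$ at $t=1$, the explicit choice of $Y^m(0)$ in (2.24) gives $K^m(1)Y^m(0)=X(\varphi^m(0))-X(0)$, so it suffices to identify $\tfrac{1}{\gamma}\Psi^m(U_{X^m}+M_{X^m})(1)$ with $X(1)-X(\varphi^m(0))$. Since $\tfrac{d}{ds}K^m(1-s)=-\tfrac{1}{m}e^{-\gamma(1-s)/m}$ while $K^m(0)=0$ and $(U_{X^m}+M_{X^m})(0)=0$, stochastic integration by parts yields
\[
\tfrac{1}{\gamma}\Psi^m(U_{X^m}+M_{X^m})(1)=\int_0^1 K^m(1-s)\,d(U_{X^m}+M_{X^m})(s)=\int_0^1 K^m(1-s)\bigl(u_X^m(s)f^m(s)ds+dW^m(s)\bigr),
\]
which by (3.23) at $t=1$ equals $X(1)-X(\varphi^m(0))$; combining gives $X^m(1)=X(1)$.

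For locally uniform convergence on $[0,1)$, I solve (2.24) explicitly,
\[
Y^m(t)=e^{-\gamma t/m}Y^m(0)+\int_0^t e^{-\gamma(t-s)/m}\bigl(u_X^m(s)f^m(s)ds+dW^m(s)\bigr).
\]
A direct computation gives $\varphi^m(0)\le 2m/\gamma\to 0$, so $Y^m(0)\to 0$ a.s.\ by continuity of $X$; the deterministic convolution has kernel $L^1$-mass $O(m)$, and the stochastic convolution is $O(\sqrt m)$ uniformly by Burkholder, so $\sup_{t\in[0,1]}|Y^m(t)|\to 0$ a.s. For $X^m$, the analogous Duhamel identity $X^m(t)-X(0)=K^m(t)Y^m(0)+\int_0^t K^m(t-s)\bigl(u_X^m(s)f^m(s)ds+dW^m(s)\bigr)$, together with $K^m\to 1/\gamma$ uniformly on $[\delta,1]$, the convergences $f^m\to 1$ on $[0,1)$ and $\varphi^m\to\mathrm{id}$, and the defining relation $\int_0^t f^m(s)dW^m(s)=W(\varphi^m(t))-W(\varphi^m(0))$ (which lets one compare $\int_0^t K^m(t-s)dW^m(s)$ with $W(t)/\gamma$ through the $L^2$-estimate $\int_0^t(\gamma^{-1}-K^m(t-s))^2ds=O(m)$), gives $\sup_{t\in[0,T]}|X^m(t)-X(t)|\to 0$ a.s.\ for each $T<1$.

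For the cost estimates, the drift for $Z^m$ is $u_X^m(t)f^m(t)$ with $f^m(t)\in[0,1]$, so (A2, iii) yields
\[
L(t,Z^m(t);u_X^m(t)f^m(t))\le f^m(t)^2 L(t,Z^m(t);u_X^m(t))+(1-f^m(t)^2)L(t,Z^m(t);0).
\]
By Remark 2.1 (iii) $L(\cdot,\cdot;0)$ is bounded, so the second piece integrates to $O(\varphi^m(0))\to 0$. For the first piece, (A2, iv) applied with $|t-\varphi^m(t)|\le\varphi^m(0)\le 2m/\gamma\le\varepsilon_0$ (valid for $m\le\varepsilon_0\gamma/2$) gives
\[
L(t,Z^m(t);u_X^m(t))\le(1+\Delta L(\varepsilon_0,\infty))\bigl(1+L_0(\varphi^m(t),X(\varphi^m(t));u_X(\varphi^m(t)))\bigr),
\]
and the substitution $s=\varphi^m(t)$ with $ds=f^m(t)^2dt$ converts $\int_0^1 f^m(t)^2 L_0(\varphi^m(t),\ldots)dt$ into $\int_{\varphi^m(0)}^1 L_0(s,X(s);u_X(s))ds$, bounded by the assumed finite $E[\int_0^1 L_0(t,X(t);u_X(t))dt]$. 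This proves (2.29). For (2.30), the integrand converges pointwise a.e.\ on $[0,1)\times\Omega$ to $L_0(t,X(t);u_X(t))$ by continuity (A2, ii) and the previous step, and a Vitali-type uniform integrability argument, based on the $L^1$-bound just derived together with the change-of-variables identity $E[\int_0^1 f^m(t)^2 L_0(\varphi^m(t),\ldots)dt]\to E[\int_0^1 L_0]$, upgrades pointwise to integral convergence and delivers the limsup inequality.

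The main obstacle is the sharp limsup bound (2.30): the natural upper bound via (A2, iv) carries the factor $1+\Delta L(\varepsilon_0,\infty)$, which is only finite and not small, so it cannot by itself close the estimate. The remedy is to argue via pointwise convergence of the integrand and uniform integrability of the majorants, extracting subsequences if necessary so that $u_X(\varphi^m(\cdot))\to u_X(\cdot)$ pointwise a.e.\ in $dt$ (using $L^1$-continuity of translation for each $\omega$), and exploiting the change of variables $s=\varphi^m(t)$, $ds=f^m(t)^2 dt$, to anchor the $L^1$-norms of the dominating majorants against the finite limit $E[\int_0^1 L_0]$.
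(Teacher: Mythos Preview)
Your argument for $X^m(1)=X(1)$ and for the bound (\ref{2.29}) matches the paper's. The gaps are in the locally uniform convergence and, to a lesser extent, in (\ref{2.30}).

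For the convergence of $Y^m$, the two justifications you give do not work as stated. Saying the kernel $e^{-\gamma(t-s)/m}$ has $L^1$-mass $O(m)$ only bounds the deterministic convolution by $O(m)\|u_X^m f^m\|_\infty$, and $u_X$ need not be bounded (under (A2,\,i) alone you only know $u_X\in L^1([0,1])$ a.s.). For the stochastic convolution, $t\mapsto\int_0^t e^{-\gamma(t-s)/m}\,dW^m(s)$ is \emph{not} a martingale in $t$ (the integrand depends on the upper limit), so Burkholder--Davis--Gundy does not apply directly; and even if you recover $L^2$ bounds, you still owe an argument for almost-sure convergence along the full family $m\to 0$, not just subsequences. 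The same issue recurs in your $X^m$ argument, where the $L^2$-estimate $\int_0^t(\gamma^{-1}-K^m(t-s))^2\,ds=O(m)$ gives convergence in probability, not a.s.

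The paper avoids all of this by an integration-by-parts step (Lemma~\ref{lm3.6}) that rewrites both $X^m(t)$ and $Y^m(t)$ purely in terms of the \emph{continuous path} $s\mapsto X(\varphi^m(s))$, with no stochastic integrals and no $u_X$ appearing: for $t<1$,
\[
X^m(t)=\Bigl(1-\tfrac{K^m(t)}{K^m(1)}\Bigr)X(0)+f^m(t)\,\Psi^m\!\Bigl(\tfrac{X(\varphi^m(\cdot))}{f^m(\cdot)^2}\Bigr)(t),
\]
and similarly for $Y^m$. One then feeds these into the pathwise smoothing estimates for $\Psi^m$ on continuous functions (Lemma~\ref{lm3.1}), which gives genuine a.s.\ convergence on $[0,t_0]$ for every $t_0<1$, for each fixed $\omega$.

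For (\ref{2.30}) you correctly spot that the $(1+\Delta L(\varepsilon_0,\infty))$ majorant alone is too crude, but your remedy (subsequences so that $u_X(\varphi^m(t))\to u_X(t)$ a.e.) is unnecessarily delicate. The paper performs the change of variables $s=\varphi^m(t)$ \emph{before} passing to the limit, obtaining
\[
\int_{\varphi^m(0)}^1 L\bigl((\varphi^m)^{-1}(s),\,Z^m((\varphi^m)^{-1}(s));\,u_X(s)\bigr)\,ds.
\]
Now the third argument $u_X(s)$ is frozen, independent of $m$; only the first two arguments move, and those converge by the already-established locally uniform convergence (note $(\varphi^m)^{-1}(s)\le s<1$). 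Continuity (A2,\,ii) then gives pointwise convergence of the integrand for a.e.\ $s$, and the $m$-independent majorant $(1+\Delta L(\varepsilon_0,\infty))(1+L_0(s,X(s);u_X(s)))$ lets ordinary dominated convergence close the argument, with no subsequences or Vitali needed.
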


Suppose that (A1, ii) holds.
For $m>0$ and $P_0,P_1\in \mathcal{P}(\mathbb{R}^d )$, let
\begin{equation}\label{2.3300}
C(m,P_0,P_1):=\frac{1}{f^m(0)}\inf\left\{R\varphi^m(0)+\frac{V^0(P_0,P_1)+1}{C_{1,R}
}+\{d\varphi^m(0)\}^{1/2}:R>0\right\}
\end{equation}
(see (\ref{2.12})--(\ref{3.27}) for notation).
If $V^0(P_0,P_1)$ is finite, then,  for $R>0$,
\begin{eqnarray}
C(m,P_0,P_1)
&\le& \frac{1}{f^m(0)}\left\{R\varphi^m(0)+\frac{V^0(P_0,P_1)+1}{C_{1,R}}+\{d\varphi^m(0)\}^{1/2}\right\}\qquad\\
&\to &\frac{V^0(P_0,P_1)+1}{C_{1,R}}, \quad m\to 0\nonumber\\
&\to &0, \quad R\to\infty.\nonumber
\end{eqnarray}
Besides, since $m\mapsto f^m(\cdot)$ is nonincreasing and $m\mapsto \varphi^m (\cdot)$ is nondecreasing,
\begin{equation}
C(m,P_0,P_1)\downarrow 0, \quad m\downarrow 0.
\end{equation}

Let 
$$\mathcal{P}_{1,C}(\mathbb{R}^d ):=\left\{P\in \mathcal{P}(\mathbb{R}^d ):\int_{\mathbb{R}^d}|x|P(dx)\le C\right\}, \quad  C\ge 0.$$
Then $\mathcal{P}_{1,C}(\mathbb{R}^d )$ is a closed subset of $\mathcal{P}(\mathbb{R}^d )$.

The following holds from Theorem \ref{thm2.3}.

\begin{corollary}\label{thm2.2}
Suppose that (A1, ii) and (A2) hold. Then
for any $P_0,P_1\in \mathcal{P}(\mathbb{R}^d )$ such that $V^0(P_0,P_1)$ is finite
and any $\{D^m\}_{m\in (0, \varepsilon_0\gamma/2]}$ such that 
\begin{equation}\label{2.36.0}
\mathcal{P}_{1,C(m,P_0,P_1)}(\mathbb{R}^d ) \subset D^m, \quad m\in (0, \varepsilon_0\gamma/2],
\end{equation}
$\{V^m(D^m,P_0;P_1)\}_{m\in (0, \varepsilon_0\gamma/2]}$ is bounded and the following holds: 
\begin{equation}\label{2.26}
\limsup_{m\to 0}V^m(D^m,P_0;P_1)\le V^0(P_0,P_1).
\end{equation}
\end{corollary}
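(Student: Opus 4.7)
The plan is to build admissible competitors $Z^m\in\mathcal{A}^m(D^m,P_0;P_1)$ from a near--minimizer of $V^0(P_0,P_1)$ by invoking the explicit construction of Theorem \ref{thm2.3}, and then to upper bound $V^m(D^m,P_0;P_1)$ by $E[\int_0^1 L(t,Z^m(t);u_X^m(t)f^m(t))dt]$. Theorem \ref{thm2.3} already guarantees such a $Z^m$ lies in $\mathcal{A}^m(P_0,P_1)$ and that (\ref{2.29})--(\ref{2.30}) hold; the remaining point is to verify that the initial momentum satisfies $P^{Y^m(0)}\in D^m$.

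Concretely, given $\varepsilon\in(0,1]$, I would fix $X\in\mathcal{A}(P_0,P_1)$ with $E[\int_0^1 L_0(t,X(t);u_X(t))dt]\le V^0(P_0,P_1)+\varepsilon$ and form $Z^m=(X^m,Y^m)$ via (\ref{2.21})--(\ref{2.24}). Setting $t=0$ in (\ref{2.24}) yields $Y^m(0)=(X(\varphi^m(0))-X(0))/K^m(1)$. Under (A2, i), the SDE (\ref{1.4}) gives
\[
\gamma\bigl(X(\varphi^m(0))-X(0)\bigr)=\int_0^{\varphi^m(0)}u_X(s)ds+W(\varphi^m(0)),
\]
so the martingale piece contributes at most $E[|W(\varphi^m(0))|]\le\sqrt{d\varphi^m(0)}$ by the Cauchy--Schwarz inequality.

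For the drift piece I would exploit the superlinearity encoded by (A1, ii): for any $R>0$ with $C_{1,R}>0$ one has pointwise $|u|\le R+L_0(t,x;u)/C_{1,R}$, since the inequality is trivial on $\{|u|<R\}$ and immediate from the definition of $C_{1,R}$ on $\{|u|\ge R\}$. Integrating against $dsdP$ over $[0,\varphi^m(0)]\subset[0,1]$ gives
\[
E\Bigl[\int_0^{\varphi^m(0)}|u_X(s)|ds\Bigr]\le R\varphi^m(0)+\frac{V^0(P_0,P_1)+1}{C_{1,R}}.
\]
Combining the two estimates and using the identity $\gamma K^m(1)=f^m(0)$, then taking the infimum over admissible $R$, yields $E[|Y^m(0)|]\le C(m,P_0,P_1)$ in the exact form (\ref{2.3300}). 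Hence $P^{Y^m(0)}\in\mathcal{P}_{1,C(m,P_0,P_1)}(\mathbb{R}^d)\subset D^m$, so $Z^m\in\mathcal{A}^m(D^m,P_0;P_1)$.

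Finally, (\ref{2.29}) gives uniform boundedness of $V^m(D^m,P_0;P_1)\le E[\int_0^1 L(t,Z^m(t);u_X^m(t)f^m(t))dt]$ for $m\in(0,\varepsilon_0\gamma/2]$, and (\ref{2.30}) yields $\limsup_{m\to 0}V^m(D^m,P_0;P_1)\le V^0(P_0,P_1)+\varepsilon$; letting $\varepsilon\downarrow 0$ closes the proof. I expect the only nontrivial step to be the first--moment estimate on $Y^m(0)$ with the exact constant $C(m,P_0,P_1)$: matching the three terms in the definition (\ref{2.3300}) is precisely where the superlinearity (A1, ii) enters, being used to convert the running cost $L_0$ into an $L^1$ bound on $u_X$ over the shrinking interval $[0,\varphi^m(0)]$; everything else is routine bookkeeping on top of Theorem \ref{thm2.3}.
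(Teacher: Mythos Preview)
Your proposal is correct and follows essentially the same route as the paper: pick a near--minimizer $X\in\mathcal{A}(P_0,P_1)$, build $Z^m$ via Theorem~\ref{thm2.3}, verify $E[|Y^m(0)|]\le C(m,P_0,P_1)$ by splitting $X(\varphi^m(0))-X(0)$ into its drift and Brownian parts and using the superlinearity bound $|u|\le R+L_0/C_{1,R}$, and then read off boundedness from (\ref{2.29}) and the limsup from (\ref{2.30}). Your explicit $\varepsilon\downarrow 0$ step is in fact a slight improvement in presentation over the paper, which fixes the slack at $+1$ to match the constant in (\ref{2.3300}) and leaves the final passage to $V^0(P_0,P_1)$ implicit in the phrase ``(\ref{2.30}) completes the proof.''
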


\begin{remark}\label{rk2.5}
For $m>0$ and $P_0,P_1\in \mathcal{P}(\mathbb{R}^d )$,
if $D_1\subset D_2\subset \mathcal{P}(\mathbb{R}^d )$, then 
\begin{equation}
V^m(D_2,P_0;P_1)\le V^m(D_1,P_0;P_1).
\end{equation}
\end{remark}

The following holds immediately from Theorem \ref {thm2.1} and Corollary \ref{thm2.2}.
We omit the proof.

\begin{corollary}\label{co2.1}
Suppose that (A1, ii, iv) and (A2) hold.
Then for any $P_0,P_1\in\mathcal{P}(\mathbb{R}^d)$ and 
any $\{D^m
\}_{m\in (0, \varepsilon_0\gamma/2]}\subset \mathcal{P}(\mathbb{R}^d)$ such that (\ref{2.36.0}) and 
\begin{equation}\label{2.39}
\sup\{d_{wk}(\delta_0, P): P\in D^m\}\to 0, \quad m\to 0,
\end{equation}
the following holds:
\begin{equation}
\lim_{m\to  0}V^{m}(D^m,P_0;P_1)=V^0(P_0,P_1).
\end{equation}
\end{corollary}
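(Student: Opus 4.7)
The plan is to sandwich $V^m(D^m,P_0;P_1)$ between the two bounds already available: the $\limsup$ bound from Corollary \ref{thm2.2} and the $\liminf$ bound from Theorem \ref{thm2.1}. First I would check that the standing assumptions suffice to invoke both results. The hypothesis (A2, i) that $\sigma$ is the identity gives boundedness and continuity of $\sigma$, so (A1, i) holds; the continuity of $L$ in (A2, ii) upgrades to lower semicontinuity, so (A1, iii) holds. Combined with (A1, ii, iv) and the full (A2) assumed here, every hypothesis of Theorem \ref{thm2.1} and Corollary \ref{thm2.2} is in force.

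For the upper bound, I would split into two cases. If $V^0(P_0,P_1)=\infty$, the inequality $\limsup_{m\to 0}V^m(D^m,P_0;P_1)\le V^0(P_0,P_1)$ is vacuous. If $V^0(P_0,P_1)<\infty$, then the assumption (\ref{2.36.0}) is exactly what Corollary \ref{thm2.2} needs, and it delivers
\begin{equation*}
\limsup_{m\to 0}V^m(D^m,P_0;P_1)\le V^0(P_0,P_1).
\end{equation*}

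For the lower bound, I would take any sequence $m_n\downarrow 0$ and apply Theorem \ref{thm2.1} with $B_n:=D^{m_n}$. Hypothesis (\ref{2.39}) is precisely the condition $\sup\{d_{wk}(\delta_0,P):P\in B_n\}\to 0$ required by that theorem, so (\ref{2.15}) yields
\begin{equation*}
\liminf_{n\to\infty}V^{m_n}(D^{m_n},P_0;P_1)\ge V^0(P_0,P_1).
\end{equation*}
Since the sequence $\{m_n\}$ was arbitrary, this gives the full $\liminf$ inequality as $m\to 0$.

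Combining the two inequalities yields the limit and its value $V^0(P_0,P_1)$, covering both the finite and infinite cases at once (in the infinite case the lower bound forces the limit to be $+\infty$, matching $V^0$). No genuine obstacle arises: the work has already been absorbed into Theorem \ref{thm2.1} (tightness, $\Gamma$--liminf of the cost under the zero--mass limit) and Corollary \ref{thm2.2} (explicit construction of near--optimizers $Z^m=(X^m,Y^m)$ via (\ref{2.21})--(\ref{2.24}) together with the estimate (\ref{2.30})). The only point requiring minor care is the verification that the matching of hypotheses between (A1) and (A2) is legitimate, and the handling of the $V^0=\infty$ case, which this argument accommodates automatically.
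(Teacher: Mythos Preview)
Your proposal is correct and matches the paper's approach exactly: the paper states that Corollary \ref{co2.1} follows immediately from Theorem \ref{thm2.1} and Corollary \ref{thm2.2} and omits the proof. Your argument simply fills in the routine verification that (A2, i--ii) supply (A1, i) and (A1, iii), and handles the sandwiching of $\liminf$ and $\limsup$, including the $V^0=\infty$ case, which is precisely what the paper leaves to the reader.
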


From Theorem \ref{pp1.1} and Corollary \ref{co2.1} stated above, 
Lemmas  \ref{lm3.3}--\ref{lm3.4} in section \ref{sec:3}, and Theorem 2.4 in \cite{2-JS}, p. 528, 
the first part of the following corollary can be proven in the same way as 
Lemma 3.1 in \cite{MT06}.
Indeed, 
for $Z=(X,Y)\in \mathcal{A}^m$ and $n\ge 2, \delta >0$, if 
$$\sup\left\{|Z(t)-Z(s)|: t,s\in [0,1], |t-s|\le \frac{1}{n}\right\}<\delta,$$
then under (A1, iv), by Jensen's inequality,
\begin{eqnarray}
&&\int_{1/n}^1
L\left(t,Z(t);n\int_{t-1/n}^t u_X(s)ds\right)dt\\
&\le& \int_{1/n}^1dt
\int_{t-1/n}^t nL\left(t,Z(t);u_X(s)\right)ds\nonumber\\
&\le& \left(1+\Delta L\left(\frac{1}{n}, \delta\right)\right)
\int_0^1 L\left(t,Z(t);u_X(t)\right)dt+\Delta L\left(\frac{1}{n}, \delta\right).\nonumber
\end{eqnarray}
The second part can also be  proven in the same way as  Proposition 2.2, (ii) in \cite{MT06}
(see also \cite{M21}, Proposition 1, (iii)).
Indeed, (A3, ii, iii) implies the uniqueness of the minimizer of $V^0(P_0,P_1)$.
We omit the proof.

\begin{corollary}\label {co2.3}
Suppose that (A1, ii, iv), (A2), and (A3, i) hold.
Then for any $P_0,P_1\in \mathcal{P}(\mathbb{R}^d )$ such that $V^0(P_0,P_1)$ is finite
and any $\{D^m\}_{m\in (0, \varepsilon_0\gamma/2]}\subset \mathcal{P}(\mathbb{R}^d)$ such that $D^m, m\in (0, \varepsilon_0\gamma/2]$ are closed and that (\ref{2.36.0}) and (\ref{2.39}) hold,
any weak limit point, as $m\to 0$, of  any minimizers of $V^{m}(D^m
, P_0;P_1)$ can be written as $(X, 0)$, where $X$ is a minimizer of $V^0(P_0,P_1)$.
Suppose, in addition, that (A3, ii, iii) holds. 
Then minimizers of $V^{m}(D^m, P_0;P_1)$ weakly converge to $(X, 0)$ as $m\to 0$,
where $X$ is the unique minimizer of $V^0(P_0,P_1)$.
\end{corollary}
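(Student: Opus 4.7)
The plan is to adapt the proofs of Lemma 3.1 and Proposition 2.2(ii) of \cite{MT06}, using the existence and limit results already established earlier in the paper. Let $\{Z^m = (X^m, Y^m)\}_{m \in (0, \varepsilon_0\gamma/2]}$ be any family of minimizers of $V^m(D^m, P_0; P_1)$; existence for each $m$ is ensured by Theorem \ref{pp1.1}, since $D^m$ is closed and $V^m(D^m, P_0; P_1)$ is finite by Corollary \ref{thm2.2}. Corollary \ref{co2.1} gives $V^m(D^m, P_0; P_1) \to V^0(P_0, P_1)$, so the costs $E\bigl[\int_0^1 L(t, Z^m; u_{X^m})\, dt\bigr]$ form a bounded family. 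Along any sequence $m_n \downarrow 0$, condition (\ref{2.4}) of Theorem \ref{thm2.1} is therefore met, giving tightness of $\{Z^{m_n}\}$ and of $\{Q_{m_n}(dt\, dz\, du) := dt\, P^{(Z^{m_n}(t), u_{X^{m_n}}(t))}(dz\, du)\}$, and showing that every weak subsequential limit has the form $((X, 0), \overline{Q}_\infty)$ for some $X \in \mathcal{A}(P_0, P_1)$ satisfying (\ref{2.13})--(\ref{2.14}).

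To prove the first conclusion, one must show such an $X$ minimizes $V^0(P_0, P_1)$. One inequality is trivial from $X \in \mathcal{A}(P_0, P_1)$, so the task reduces to bounding $E\bigl[\int_0^1 L_0(t, X(t); u_X(t))\, dt\bigr]$ above by $V^0(P_0, P_1)$. Following the argument of \cite{MT06}, I would time-mollify the drifts: set $u^{(k)}_X(t) := k \int_{t-1/k}^t u_X(s)\, ds$ and use Jensen in $u$ via (A1, iv) together with the uniform-continuity estimate (A3, i) to obtain, whenever the $1/k$-oscillation of $Z$ is at most $\delta$,
\begin{equation*}
\int_{1/k}^1 L\left(t, Z(t); u^{(k)}_X(t)\right) dt \leq (1 + \Delta L(1/k, \delta))\int_0^1 L(t, Z(t); u_X(t))\, dt + \Delta L(1/k, \delta).
\end{equation*}
This allows one to pass to the weak limit along $Q_{m_n} \to \overline{Q}_\infty$ using the lower semicontinuity of $L$ from (A1, iii) and \cite{2-JS}, Theorem 2.4, to conclude
\begin{equation*}
\liminf_{n \to \infty} \int L\, dQ_{m_n} \geq \int L\, d\overline{Q}_\infty.
\end{equation*}
Since $\overline{Q}_\infty$ concentrates on $\{y = 0\}$ by (\ref{2.14}), the right-hand side equals $\int L_0\, d\overline{Q}_\infty$; a final Jensen step using the conditional-expectation identity (\ref{2.13}) and (A1, iv) lower-bounds it by $E\bigl[\int_0^1 L_0(t, X(t); u_X(t))\, dt\bigr]$. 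Since $\int L\, dQ_{m_n} = V^{m_n}(D^{m_n}, P_0; P_1) \to V^0(P_0, P_1)$, the desired bound follows and $X$ minimizes $V^0$.

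For the second assertion, (A3, ii, iii) provides strict convexity in $u$ together with $|u|^2$-coercivity of $L_0$, which forces the minimizer $X$ of $V^0(P_0, P_1)$ to be unique by the argument of \cite{MT06}, Proposition 2.2(ii). The first part then says every weak subsequential limit of the family $\{Z^m\}$ equals $(X, 0)$, so tightness (Theorem \ref{thm2.1}) upgrades this to weak convergence of the full family to $(X, 0)$ as $m \to 0$. The main obstacle is the central lower-semicontinuity step: the drifts $u_{X^{m_n}}$ are only progressively measurable and the velocity components $Y^{m_n}$ become singular in the zero-mass limit, so passing from joint weak convergence of $(Z^{m_n}, u_{X^{m_n}})$ to an integrated control of $L(t, z; u)$ requires the mollification device together with (A3, i); this is where all the structural assumptions ultimately come to bear.
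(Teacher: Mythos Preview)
Your proposal is correct and follows essentially the same route as the paper's sketch: existence of minimizers via Theorem~\ref{pp1.1}, boundedness and convergence of values via Corollaries~\ref{thm2.2} and~\ref{co2.1}, tightness and the $(X,0)$ structure of limit points via Theorem~\ref{thm2.1} (equivalently Lemmas~\ref{lm3.3}--\ref{lm3.4}), and then the mollification inequality together with \cite{2-JS}, Theorem~2.4, to carry out the identification and lower-semicontinuity step in the manner of \cite{MT06}, Lemma~3.1; the second part via uniqueness from (A3,\,ii,\,iii) as in \cite{MT06}, Proposition~2.2(ii). One small correction: you invoke (A1,\,iii) for lower semicontinuity of $L$, but that hypothesis is not assumed in Corollary~\ref{co2.3}; the required continuity is supplied by (A2,\,ii) instead.
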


\subsection{Zero--mass limit: cost function of polynomial growth in $u$}

In this section, we study the zero--mass limit of SOTs in the case where $u\mapsto  L(t,z;u)$ is of  polynomial growth (see Remark \ref{rk2.1}, (iv)).

For $\{m_n\}_{n\ge 1}$ such that $0<m_n\to 0, n\to\infty$ and $P_0,P_1\in\mathcal{P}(\mathbb{R}^d)$, and for $\{(X_n, Y_n)\in \mathcal{A}^{m_n}(P_0,P_1)\}_{n\ge 1}$ such that $\{E[dX_n(t)/dt|_{t=0}]\}_{n\ge 1}$ is bounded, 
$$\{P^{Y_n(0)}\}_{n\ge 1}\subset\mathcal{P}_{1,C(m_n,P_0,P_1)}(\mathbb{R}^d)$$ for sufficiently large $n\ge 1$.
Indeed, from (\ref{2.3300}),
\begin{equation}
\frac{C(m,P_0,P_1)}{\sqrt m}
\ge\frac{\{d\varphi^m(0)\}^{1/2}}{f^m(0)\sqrt m} 
\to \sqrt{\frac{3d}{2\gamma}},\quad m\to 0.
\end{equation}
We show that (\ref{2.26}) holds under additional assumption (A4),
even when $E[dX(t)/dt|_{t=0}]$ is bounded for $(X,Y)$ under consideration.

Under (A4),
for $m>0$, $P_0,P_1\in\mathcal{P}(\mathbb{R}^d)$, and $X\in \mathcal{A}(P_0,P_1)$, 
taking a different probability space, e.g., a product probability space,  if necessary, take 
$\mathcal{Y}^m(0)$, defined on the same probability space as $\{X(t)\}_{0\le t\le 1}$,
that is independent of $X$ and 
such that as $m\to 0$,
\begin{equation}\label{2.42.0}
\mathcal{Y}^m(0)\to 0, \quad {\rm a.s.},\quad 
E[|\mathcal{Y}^m(0)|^{r_0}]\to 0.
\end{equation}

We define $\mathcal{Z}^m=(\mathcal{X}^m, \mathcal{Y}^m)\in \mathcal{A}^{m}$ by the following: for $t\in [0,1]$,
\begin{eqnarray}
\mathcal{X}^m(t)&:=&X(0)+\int_0^t \frac{1}{m}\mathcal{Y}^m(s)ds, \label{2.42}
\\
\mathcal{Y}^m(t)&=&\mathcal{Y}^m(0)+
\int_0^t \left\{\left(u_{X}^m(s)+\beta^m(0)\right)f^m(s)- \frac{\gamma}{m}\mathcal{Y}^m(s)\right\}ds
+W^m(t),\nonumber\\
\label{2.43}
\end{eqnarray}
where
\begin{equation}\label{2.44}
\beta^m(0):=\frac{\gamma(X(\varphi^m(0))-X(0)-K^m(1)\mathcal{Y}^m(0))}{1-\varphi ^m(0)}
\end{equation}
(see (\ref{3.7})--(\ref{2.24.00}) and (\ref{2.26.00}) for notation).

The following implies that $\mathcal{Z}^m=(\mathcal{X}^m, \mathcal{Y}^m)$ 
converges to $(X, 0)$ as $m\to 0$.

\begin{theorem}\label{thm2.6}
Suppose that (A2, i) holds and that $m>0$.
Then  for any $P_0,P_1\in\mathcal{P}(\mathbb{R}^d)$, any $X\in\mathcal{A}(P_0,P_1)$, and 
any $\mathcal{Y}^m(0)$ that is independent of $X$ and such that (\ref{2.42.0}) holds, and for $\mathcal{Z}^m=(\mathcal{X}^m, \mathcal{Y}^m)\in \mathcal{A}^{m}$ defined by (\ref{2.42})--(\ref{2.43}), the following holds:
\begin{equation}
\mathcal{X}^m(1)=X(1).
\end{equation}
In particular, $\mathcal{Z}^m=(\mathcal{X}^m, \mathcal{Y}^m)\in \mathcal{A}^{m}(P_0,P_1)$.
Besides,
$(\mathcal{X}^m,\mathcal{Y}^m)$ converges to $(X,0)$, as $m\to 0$, locally uniformly on $[0,1)$, a.s..\\
Suppose, in addition, that (A2, ii--iv), and (A4) hold and that
\begin{equation}
E\left[\int_0^1 L_0(t,X(t);u_X(t))dt\right]<\infty.
\end{equation}
Then 
\begin{equation}\label{2.48}
\sup_{m\in (0, \varepsilon_0\gamma/2]}E\left[\int_0^1 L(t,Z^m(t);(u_X^m(t)+\beta^m(0))f^m(t))dt\right]<\infty,
\end{equation}
and 
\begin{equation}\label{2.49}
\limsup_{m\to 0}E\left[\int_0^1 L(t,Z^m(t);(u_X^m(t)+\beta^m(0))f^m(t))dt\right]\le E\left[\int_0^1 L_0(t,X(t);u_X(t))dt\right].
\end{equation}
\end{theorem}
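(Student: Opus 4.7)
My plan is to follow the blueprint of Theorem \ref{thm2.3}, whose proof already handles the case $\mathcal{Y}^m(0)=0$. The definition (\ref{2.44}) of $\beta^m(0)$ is precisely the drift correction needed so that the terminal identity $\mathcal{X}^m(1)=X(1)$ survives when a nonzero initial momentum $\mathcal{Y}^m(0)$ is prescribed, and assumption (A4) will be used to absorb the cost of this correction via the polynomial growth bound (\ref{2.5.0}).

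\textbf{Terminal identity and pathwise convergence.} I first apply Lemma \ref{lm3.2} to the SDE (\ref{2.42})--(\ref{2.43}) to obtain
$$\mathcal{X}^m(1)=X(0)+K^m(1)\mathcal{Y}^m(0)+\tfrac1\gamma\Psi^m(U_{\mathcal{X}^m}+W^m)(1),$$
where $U_{\mathcal{X}^m}(t)=\int_0^t(u_X^m(s)+\beta^m(0))f^m(s)\,ds$. A stochastic integration by parts using $\partial_s\exp(-\gamma(1-s)/m)=(\gamma/m)\exp(-\gamma(1-s)/m)$ rewrites the last term as $\int_0^1 K^m(1-s)\,d(U_{\mathcal{X}^m}+W^m)(s)$; combined with $K^m(1-s)=f^m(s)/\gamma$, $\int_0^1 K^m(1-s)f^m(s)\,ds=(1-\varphi^m(0))/\gamma$, and the identity (\ref{3.23}) at $t=1$, namely $X(1)-X(\varphi^m(0))=\int_0^1 K^m(1-s)(u_X^m(s)f^m(s)\,ds+dW^m(s))$, the substitution (\ref{2.44}) will yield $\mathcal{X}^m(1)=X(1)$. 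The locally uniform convergence of $(\mathcal{X}^m,\mathcal{Y}^m)$ to $(X,0)$ on $[0,1)$ follows from the same representation, exactly as in Theorem \ref{thm2.3}: $K^m(t)\to 1/\gamma$ and $\exp(-\gamma t/m)\to 0$ locally uniformly on $(0,1]$, $\Psi^m$ is an approximate identity by Lemma \ref{lm3.1}, $W^m\to W$ and $u_X^m\to u_X$ via (\ref{2.24.00}) and $\varphi^m(t)\to t$, and $\mathcal{Y}^m(0),\beta^m(0)\to 0$ a.s.\ by (\ref{2.42.0}) and the continuity of $X$ at $0$.

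\textbf{Cost bounds.} Assumption (A4, ii) together with $E[\int_0^1 L_0(t,X(t);u_X(t))\,dt]<\infty$ implies $E[\int_0^1|u_X(s)|^{r_0}\,ds]<\infty$; a Jensen/BDG estimate of $X(\varphi^m(0))-X(0)=\gamma^{-1}\int_0^{\varphi^m(0)}u_X(s)\,ds+\gamma^{-1}W(\varphi^m(0))$, together with (\ref{2.42.0}) and $1-\varphi^m(0)\ge 1/2$ for $m$ small, then gives $E[|\beta^m(0)|^{r_0}]\to 0$. Writing $(u_X^m+\beta^m(0))f^m=u_X^m f^m+\beta^m(0)f^m$ and applying (\ref{2.5.0}) with $u=u_X^m f^m$, $v=\beta^m(0)f^m$,
$$L(t,\mathcal{Z}^m(t);(u_X^m+\beta^m(0))f^m(t))\le L(t,\mathcal{Z}^m(t);u_X^m(t)f^m(t))+C|\beta^m(0)|\bigl(|u_X^m(t)|^{r_0-1}+|\beta^m(0)|^{r_0-1}\bigr);$$
the expectation of the second term tends to $0$ by H\"older's inequality and the uniform bound $\sup_m E[\int_0^1|u_X^m(t)|^{r_0}\,dt]<\infty$ (obtained via the change of variable $s=\varphi^m(t)$ and $f^m\ge f^m(0)\to 1$). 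For the first term I reproduce the argument of Theorem \ref{thm2.3}: decompose $L=L(\cdot,\cdot;0)+R_1$, apply (A2, iii) to get $R_1(t,\mathcal{Z}^m(t);f^m(t)u_X^m(t))\le f^m(t)^2 R_1(t,\mathcal{Z}^m(t);u_X^m(t))$, use (A2, iv) (admissible for $m\le\varepsilon_0\gamma/2$ since $|t-\varphi^m(t)|\le 3m/(2\gamma)\le\varepsilon_0$) to replace $(t,\mathcal{Z}^m(t))$ by $(\varphi^m(t),X(\varphi^m(t)),0)$ at the cost of a factor $1+\Delta L(\varepsilon_0,\infty)$, and finally perform the change of variable $s=\varphi^m(t)$, giving
$$E\!\left[\int_0^1 f^m(t)^2 L_0(\varphi^m(t),X(\varphi^m(t));u_X(\varphi^m(t)))\,dt\right]=E\!\left[\int_{\varphi^m(0)}^1 L_0(s,X(s);u_X(s))\,ds\right].$$
This will establish (\ref{2.48}). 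For (\ref{2.49}), continuity (A2, ii) together with the pointwise convergences above yields $L(t,\mathcal{Z}^m(t);(u_X^m+\beta^m(0))f^m(t))\to L_0(t,X(t);u_X(t))$ $dt\,dP$--a.e., while the explicit majorant constructed above converges in $L^1(dt\,dP)$, so the reverse Fatou lemma will give (\ref{2.49}).

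\textbf{Main obstacle.} The most delicate step is establishing (\ref{2.49}) without assumption (A3, i): the argument requires producing a dominating sequence that itself converges in $L^1$ to the correct limit, and the change-of-variable identity above is what makes this possible, by converting the $m$-dependent integrand into an integral over $[\varphi^m(0),1]$ that tends monotonically to $\int_0^1 L_0(s,X(s);u_X(s))\,ds$.
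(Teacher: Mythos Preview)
Your overall strategy matches the paper's: reduce to Theorem~\ref{thm2.3} and use (A4) to absorb the extra drift $\beta^m(0)$. For the pathwise convergence the paper takes a shortcut you might prefer: instead of redoing the representation argument, it writes $\mathcal{X}^m-X^m$ and $\mathcal{Y}^m-Y^m$ explicitly from (\ref{3.9})--(\ref{3.11}) and (\ref{4.13.0})--(\ref{4.19}), shows both are bounded by a constant times $|X(\varphi^m(0))-X(0)|+|\mathcal{Y}^m(0)|$, and then invokes Theorem~\ref{thm2.3} directly. This sidesteps your claim ``$u_X^m\to u_X$'', which is not literally true pointwise since $u_X$ is only measurable.

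There is, however, a genuine gap in your cost--bound step. The inequality ``$f^m\ge f^m(0)$'' is backwards: $f^m(t)=1-\exp(-\gamma(1-t)/m)$ is \emph{decreasing} in $t$, so $f^m(t)\le f^m(0)$ and $f^m(t)\downarrow 0$ as $t\uparrow 1$. Consequently the change of variable $s=\varphi^m(t)$ gives
\[
\int_0^1|u_X^m(t)|^{r_0}\,dt=\int_{\varphi^m(0)}^1\frac{|u_X(s)|^{r_0}}{f^m((\varphi^m)^{-1}(s))^2}\,ds,
\]
and the weight $1/f^m(\cdot)^2$ blows up near $s=1$ (like a constant times $(1-s)^{-2/3}$); for an integrable $|u_X|^{r_0}$ with enough mass near $s=1$ this quantity need not be uniformly bounded in $m$, or even finite for fixed $m$. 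The paper avoids this by reversing the order of your two reductions: it applies (A2,~iii) \emph{first}, pulling out the factor $f^m(t)^2$ from $R_1$, and only \emph{then} applies (A4) to $L\bigl(t,\mathcal{Z}^m(t);u_X(\varphi^m(t))+\beta^m(0)\bigr)$ (see (\ref{4.35.1}) and the line following it). The resulting correction term already carries the factor $f^m(t)^2$, which is exactly the Jacobian of $\varphi^m$, so the change of variable yields $E\bigl[|\beta^m(0)|\int_{\varphi^m(0)}^1|u_X(s)|^{r_0-1}\,ds\bigr]$ with no singular weight; H\"older together with $E[|\beta^m(0)|^{r_0}]\to 0$ then finishes. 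Your ordering can be salvaged only by retaining the factor $f^m(t)^{r_0}$ that you discarded, and even then only when $r_0\ge 2$.
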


The following easily holds from Theorem \ref{thm2.6}.
Indeed, if $\{Y_n\}_{n\ge 1}$ is $L^{r_0}$--convergent, then taking a different probability space if necessary,
one can assume that $\{Y_n\}_{n\ge 1}$ is convergent a.s. by Skorokhod's theorem.
We omit the proof.

\begin{corollary}\label{co2.4}
Suppose that (A2) and (A4)  hold. Then
for any $P_0,P_1\in \mathcal{P}(\mathbb{R}^d )$ such that $V^0(P_0,P_1)$ is finite
and any $\{D^m \}_{m\in (0, \varepsilon_0\gamma/2]}\subset \mathcal{P}(\mathbb{R}^d)$ such that 
\begin{equation}\label{2.52}
\inf\{E[|Y|^{r_0}]:P^Y\in D^m\}\to 0, \quad m\to 0,
\end{equation}
$\{V^m(D^m,P_0;P_1)\}_{m\in (0, \varepsilon_0\gamma/2]}$ is bounded and the following holds: 
\begin{equation}\label{2.50}
\limsup_{m\to 0}V^m(D^m
,P_0;P_1)\le V^0(P_0,P_1).
\end{equation}
\end{corollary}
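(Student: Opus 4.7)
The plan is to use Theorem~\ref{thm2.6} as an admissibility--preserving vehicle and then invoke its asymptotic estimate. Fix $\varepsilon>0$ and select a near--minimizer $X^{\varepsilon}\in \mathcal{A}(P_0,P_1)$ of $V^0(P_0,P_1)$, so that $E\bigl[\int_0^1 L_0(t,X^{\varepsilon}(t);u_{X^{\varepsilon}}(t))dt\bigr]\le V^0(P_0,P_1)+\varepsilon$; this is available since $V^0(P_0,P_1)<\infty$ by hypothesis. Because $\limsup_{m\to 0}$ in (\ref{2.50}) can be verified along arbitrary sequences, fix $m_n\downarrow 0$ and, for each $n$, use (\ref{2.52}) to choose $Q_n\in D^{m_n}$ with $\int_{\mathbb{R}^d}|y|^{r_0}Q_n(dy)\to 0$; in particular $Q_n\Rightarrow \delta_0$. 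Enlarging the underlying probability space to a product if necessary, Skorokhod's representation theorem furnishes random variables $\mathcal{Y}^{m_n}(0)$ with $P^{\mathcal{Y}^{m_n}(0)}=Q_n$, each independent of $X^{\varepsilon}$ and of the Brownian motions $W^{m_n}$ built via (\ref{2.24.00}), and satisfying $\mathcal{Y}^{m_n}(0)\to 0$ almost surely. The matching $L^{r_0}$--convergence $E[|\mathcal{Y}^{m_n}(0)|^{r_0}]=\int |y|^{r_0} Q_n(dy)\to 0$ then secures (\ref{2.42.0}).

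Feed $(X^{\varepsilon},\mathcal{Y}^{m_n}(0))$ into the explicit construction (\ref{2.42})--(\ref{2.44}) of Theorem~\ref{thm2.6} to obtain $\mathcal{Z}^{m_n}=(\mathcal{X}^{m_n},\mathcal{Y}^{m_n})\in \mathcal{A}^{m_n}(P_0,P_1)$, and note that by construction $P^{\mathcal{Y}^{m_n}(0)}=Q_n\in D^{m_n}$, so in fact $\mathcal{Z}^{m_n}\in \mathcal{A}^{m_n}(D^{m_n},P_0;P_1)$. Under (A2,~i), matching (\ref{2.43}) against (\ref{1.1})--(\ref{1.2}) identifies the control as $u_{\mathcal{X}^{m_n}}(t)=(u_{X^{\varepsilon}}^{m_n}(t)+\beta^{m_n}(0))f^{m_n}(t)$. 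Consequently, for $n$ large enough that $m_n\le \varepsilon_0\gamma/2$,
\[
V^{m_n}(D^{m_n},P_0;P_1)\le E\left[\int_0^1 L\bigl(t,\mathcal{Z}^{m_n}(t);(u_{X^{\varepsilon}}^{m_n}(t)+\beta^{m_n}(0))f^{m_n}(t)\bigr)dt\right].
\]
Applying (\ref{2.48}) gives an eventual (and hence full, by combining with finiteness on the compact complement) bound on $\{V^m(D^m,P_0;P_1)\}_{m\in(0,\varepsilon_0\gamma/2]}$, while (\ref{2.49}) yields
\[
\limsup_{n\to\infty} V^{m_n}(D^{m_n},P_0;P_1)\le E\left[\int_0^1 L_0(t,X^{\varepsilon}(t);u_{X^{\varepsilon}}(t))dt\right]\le V^0(P_0,P_1)+\varepsilon.
\]
Letting $\varepsilon\downarrow 0$, together with the arbitrariness of the sequence $\{m_n\}$, delivers (\ref{2.50}).

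The only genuine obstacle is bookkeeping: arranging a single enlarged probability space carrying $X^{\varepsilon}$, the driving Brownian motions $W^{m_n}$ appearing in (\ref{2.24.00})--(\ref{2.43}), and the prescribed independent initial velocities $\mathcal{Y}^{m_n}(0)$ with marginals $Q_n\in D^{m_n}$, so that both Skorokhod's theorem and the construction of Theorem~\ref{thm2.6} apply simultaneously. Once this setup is in place the argument is mechanical, which is why the authors chose to omit it; the substantive analytic content, namely the stability of the cost under the approximation $\mathcal{Y}^{m_n}(0)\to 0$ when $L$ has polynomial growth of order $r_0$, has already been absorbed into Theorem~\ref{thm2.6}.
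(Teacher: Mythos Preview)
Your proposal is correct and follows exactly the route the paper sketches: choose a near--minimizer $X^\varepsilon\in\mathcal{A}(P_0,P_1)$, pick $Q_n\in D^{m_n}$ with vanishing $r_0$--th moment via (\ref{2.52}), invoke Skorokhod to upgrade $L^{r_0}$--convergence to almost sure convergence on an enlarged product space, and then feed this into the construction of Theorem~\ref{thm2.6} to obtain an admissible competitor in $\mathcal{A}^{m_n}(D^{m_n},P_0;P_1)$ whose cost is controlled by (\ref{2.48})--(\ref{2.49}). This is precisely the argument the paper alludes to in its one--line hint before omitting the proof.
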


The following holds from Theorem \ref {thm2.1} and Corollary \ref{co2.4}.
We omit the proof.

\begin{corollary}\label {co2.5}
Suppose that (A1, iv), (A2), and (A4) with $r_0>1$ hold.
Then for any $P_0,P_1\in\mathcal{P}(\mathbb{R}^d)$ and 
any $\{D^m\}_{m\in (0, \varepsilon_0\gamma/2]}\subset \mathcal{P}(\mathbb{R}^d)$ such that 
(\ref{2.39}) and (\ref{2.52}) hold,
\begin{equation}
\lim_{n\to \infty}V^{m}(D^m,P_0;P_1)=V^0(P_0,P_1).
\end{equation}
\end{corollary}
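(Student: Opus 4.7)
My plan is to split the equality $\lim_{m\to 0} V^m(D^m,P_0;P_1) = V^0(P_0,P_1)$ into the two one-sided bounds and obtain them respectively from Corollary \ref{co2.4} (upper) and Theorem \ref{thm2.1} (lower), with the strictness $r_0 > 1$ in (A4) serving only to upgrade (A4) into (A1, ii).

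For the upper bound, I would first dispose of the trivial case $V^0(P_0,P_1) = \infty$, and otherwise invoke Corollary \ref{co2.4} directly: its hypotheses are (A2), (A4), and (\ref{2.52}), all of which are present, so
\[
\limsup_{m\to 0} V^m(D^m, P_0; P_1) \le V^0(P_0, P_1).
\]

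For the lower bound, I would fix an arbitrary sequence $m_n \downarrow 0$, set $B_n := D^{m_n}$, and apply Theorem \ref{thm2.1}. Hypothesis (\ref{2.39}) is exactly the required convergence $\sup\{d_{wk}(\delta_0,P):P\in B_n\}\to 0$, so the remaining task is to check (A1, i)--(A1, iv). Three of these are immediate: (A1, i) because (A2, i) makes $\sigma$ the identity, hence bounded and continuous; (A1, iii) because (A2, ii) gives full continuity, which is stronger than lower semicontinuity; (A1, iv) is postulated. The substantive point is (A1, ii), and this is where $r_0 > 1$ enters. By (A4, ii) there exist $R_0 > 0$ and $c_0 > 0$ with $C_{r_0, R_0} \ge c_0$, so $L(t,z;u) \ge c_0 |u|^{r_0}$ whenever $|u| \ge R_0$, and consequently
\[
C_{1, R} \;\ge\; c_0\, R^{r_0 - 1} \;\longrightarrow\; \infty, \qquad R \to \infty,
\]
which is precisely (A1, ii); the strict inequality $r_0 > 1$ is essential here. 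Then (\ref{2.15}) yields $\liminf_{n\to\infty} V^{m_n}(D^{m_n}, P_0; P_1) \ge V^0(P_0, P_1)$, and since $\{m_n\}$ was arbitrary, $\liminf_{m\to 0} V^m(D^m, P_0; P_1) \ge V^0(P_0, P_1)$.

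Combining the two bounds delivers the claim. I do not foresee any serious obstacle: the only non-bookkeeping step is the one-line implication (A4) with $r_0 > 1$ $\Rightarrow$ (A1, ii) above, while everything else is a direct assembly of Theorem \ref{thm2.1} and Corollary \ref{co2.4}. The reason this corollary is genuinely distinct from Corollary \ref{co2.1} is that the hypothesis (\ref{2.52}) imposed on $D^m$ is an $L^{r_0}$-condition on the initial momenta rather than a weak-convergence one, and the present argument shows that the polynomial-growth hypothesis (A4) is what makes this weaker tightness assumption sufficient.
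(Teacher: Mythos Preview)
Your proposal is correct and follows exactly the approach the paper indicates (it writes ``The following holds from Theorem \ref{thm2.1} and Corollary \ref{co2.4}. We omit the proof.''). You have supplied precisely the details the paper omits, in particular the verification that (A4) with $r_0>1$ implies (A1, ii) via $C_{1,R}\ge c_0 R^{r_0-1}\to\infty$, which is indeed the only non-bookkeeping step needed to invoke Theorem \ref{thm2.1} for the lower bound.
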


From Theorem \ref{pp1.1} and Corollary \ref{co2.5} stated above, 
Lemmas  \ref{lm3.3}--\ref{lm3.4} in section \ref{sec:3}, and Theorem 2.4 in \cite{2-JS}, p. 528, 
the following holds in the same way as Corollary \ref{co2.3}.
We omit the proof.

\begin{corollary}\label {co2.6}
Suppose that (A1, iv), (A2), (A3, i), and (A4) with $r_0>1$ hold.
Then for any $P_0,P_1\in \mathcal{P}(\mathbb{R}^d )$ such that $V^0(P_0,P_1)$ is finite and 
any $\{D^m\}_{m\in (0, \varepsilon_0\gamma/2]}\subset \mathcal{P}(\mathbb{R}^d)$ 
such that $D^m, m\in (0, \varepsilon_0\gamma/2]$ are closed and
 that (\ref{2.39}) and (\ref{2.52}) hold,
any weak limit point, as $m\to 0$, of
any minimizers of $V^m(D^m,P_0;P_1)$ can be written as $(X, 0)$, where $X$ is a minimizer of
$V^0(P_0,P_1)$. 
Suppose, in addition, that (A3, ii, iii) holds. Then minimizers of $V^m(D^m,P_0;P_1)$
weakly converge to $(X, 0)$ as $m\to 0$, where $X$ is the unique minimizer of $V^0(P_0,P_1)$.
\end{corollary}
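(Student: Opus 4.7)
The plan is to mimic the argument used for Corollary \ref{co2.3}, with Corollary \ref{co2.5} replacing Corollary \ref{co2.1} as the upper-bound ingredient. By Theorem \ref{pp1.1} applied with $B = D^m$ (which is closed by hypothesis) and (A1, ii) coming from (A4, ii), for every $m \in (0, \varepsilon_0 \gamma / 2]$ there exists a minimizer $Z^m = (X^m, Y^m) \in \mathcal{A}^m(D^m, P_0; P_1)$ of $V^m(D^m, P_0; P_1)$. Corollary \ref{co2.5} gives $\lim_{m\to 0} V^m(D^m, P_0; P_1) = V^0(P_0, P_1) < \infty$, so in particular
$$\sup_{m \in (0,\varepsilon_0\gamma/2]} E\Bigl[\int_0^1 L(t, Z^m(t); u_{X^m}(t))\,dt\Bigr] < \infty.$$

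Fix any sequence $m_n \downarrow 0$ and set $Z_n := Z^{m_n}$. Theorem \ref{thm2.1} then yields tightness of $\{Z_n\}$ and of $\{Q_n(dt\,dz\,du) := dt\,P^{(Z_n(t), u_{X_n}(t))}(dz\,du)\}$, and identifies every weak limit point of $\{Q_n\}$ as some $\overline{Q}_\infty$ associated with some $X \in \mathcal{A}(P_0, P_1)$ satisfying (\ref{2.13})--(\ref{2.14}). Since (\ref{2.14}) forces the $y$-marginal of the position-momentum limit law to be $\delta_0$, any subsequential weak limit of $\{Z_n\}$ has the form $(X, 0)$.

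The key step, which I expect to be the main obstacle, is the lower-semicontinuity inequality
$$\liminf_{n\to\infty} E\Bigl[\int_0^1 L(t, Z_n(t); u_{X_n}(t))\,dt\Bigr] \geq E\Bigl[\int_0^1 L_0(t, X(t); u_X(t))\,dt\Bigr].$$
To obtain it I would apply the averaging device described just before Corollary \ref{co2.3}: for each $k \geq 2$, replace $u_{X_n}(t)$ by its running mean $k\int_{t-1/k}^t u_{X_n}(s)\,ds$, exploit convexity of $L(t, z; \cdot)$ from (A1, iv) via Jensen's inequality, and control the space-time perturbation through the modulus $\Delta L(\cdot, \cdot)$ from (A3, i). The path regularity of $Z_n$ supplied by Lemmas \ref{lm3.3}--\ref{lm3.4}, combined with Theorem 2.4 of \cite{2-JS}, allows one to pass to the weak limit jointly in $(Z_n, u_{X_n})$ and identify the martingale part of the limit, so that letting $k \to \infty$ and using (\ref{2.13})--(\ref{2.14}) yields the displayed inequality. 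Together with the upper bound from Corollary \ref{co2.5}, this forces $X$ to be a minimizer of $V^0(P_0, P_1)$.

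Finally, under the additional (A3, ii, iii), strict convexity together with the quadratic coercivity from (A3, iii) makes $V^0(P_0, P_1)$ admit a \emph{unique} minimizer, by the Jensen-type argument of Proposition 2.2, (ii) of \cite{MT06} or Proposition 1, (iii) of \cite{M21}. Since every weak limit point of $\{Z^m\}_{m \downarrow 0}$ was shown above to be $(X, 0)$ with this unique $X$, the full family $\{Z^m\}$ must converge weakly to $(X, 0)$, completing the proof. The genuine difficulty is concentrated in the lower-semicontinuity step, whose validity depends on correctly coupling the running-mean approximation (via (A1, iv) and (A3, i)) with the joint convergence of the $(Z_n, u_{X_n})$-laws to $\overline{Q}_\infty$ and with the identification of the limit SDE structure through (\ref{2.13})--(\ref{2.14}).
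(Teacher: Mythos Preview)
Your proposal is correct and mirrors the paper's own argument: the paper explicitly states that Corollary~\ref{co2.6} ``holds in the same way as Corollary~\ref{co2.3}'' by combining Theorem~\ref{pp1.1}, Corollary~\ref{co2.5}, Lemmas~\ref{lm3.3}--\ref{lm3.4}, and Theorem~2.4 of \cite{2-JS}, with the running-mean/Jensen device under (A1,\,iv) and (A3,\,i), and the uniqueness step under (A3,\,ii,\,iii) referred to \cite{MT06,M21}. You have reconstructed exactly this route, including the observation that (A4) with $r_0>1$ furnishes (A1,\,ii).
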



\subsection{Duality formula}
In this section, we discuss the duality formula for $V^m(B, P_0;P_1)$.

By the convex duality, the following holds  in the same way as \cite{M21} (see also \cite{M2021,MT06} and the reference therein).
We give the proof for completeness.

\begin{proposition}\label{pp2.2}
Suppose that (A1) holds and that $m>0$.
Then  for any closed convex subset $B\subset \mathcal{P}(\mathbb{R}^d)$
and any $P_0,P_1\in \mathcal{P}(\mathbb{R}^d )$, 
\begin{equation}
V^m(B, P_0;P_1)
=\sup\left \{\int_{\mathbb{R}^d}f(x)P_1(dx)-V^m(B, P_0;\cdot)^*(f):f\in C_b (\mathbb{R}^d)\right\},
\end{equation}
where 
\begin{eqnarray*}
V^m(B, P_0;\cdot)^*(f)
&:=&\sup\left\{\int_{\mathbb{R}^d}f(x)P(dx)
-V^m(B,P_0;P):P\in  \mathcal{P}(\mathbb{R}^d)\right\}\\
&=&\sup \left\{E\biggl[f(X(1))-\int_0^1 L(t,Z(t);u_X (t))dt \biggr]:\right.\\
&&\qquad \left.Z\in \mathcal{A}^m, 
P^{X(0)}=P_0, P^{Y(0)}\in B\right\}.
\end{eqnarray*}
\end{proposition}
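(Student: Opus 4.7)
The plan is to establish the duality by verifying that $P \mapsto V^m(B, P_0; P)$ is convex and lower semicontinuous on $\mathcal{P}(\mathbb{R}^d)$ for the weak topology, and then invoking the Fenchel--Moreau biconjugation theorem against the pairing $\langle f, P \rangle := \int_{\mathbb{R}^d} f(x) P(dx)$ for $f \in C_b(\mathbb{R}^d)$. The second equality in the statement for $V^m(B, P_0; \cdot)^*(f)$ is immediate: unfolding $V^m(B, P_0; P)$ as the infimum over $Z \in \mathcal{A}^m(B, P_0; P)$ and then taking the supremum over $P \in \mathcal{P}(\mathbb{R}^d)$ removes the constraint $P^{X(1)} = P$, leaving only $P^{X(0)} = P_0$ and $P^{Y(0)} \in B$.

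For convexity, given $P^1, P^2 \in \mathcal{P}(\mathbb{R}^d)$, $\lambda \in (0,1)$, and $\varepsilon > 0$, I would pick $\varepsilon$-almost minimizers $Z^i = (X^i, Y^i) \in \mathcal{A}^m(B, P_0; P^i)$ living on possibly different filtered probability spaces. Forming a product probability space carrying independent copies of the $Z^i$ along with an independent Bernoulli variable $\xi$ with $P(\xi = 1) = \lambda$, I set $Z := \xi Z^1 + (1-\xi) Z^2$ and $W := \xi W^1 + (1-\xi) W^2$; the latter is a Brownian motion since pathwise it coincides with $W^1$ or $W^2$. A direct verification shows $Z \in \mathcal{A}^m$, driven by $W$ with drift $u_X := \xi u_{X^1} + (1-\xi) u_{X^2}$, terminal law $\lambda P^1 + (1-\lambda) P^2$, and $P^{Y(0)} = \lambda P^{Y^1(0)} + (1-\lambda) P^{Y^2(0)} \in B$ by convexity of $B$. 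Conditioning on $\xi$ reduces the cost to the $\lambda$-combination of the two original costs, whence convexity of $V^m(B, P_0; \cdot)$.

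For lower semicontinuity, let $P_n \to P_1$ weakly with $\liminf_n V^m(B, P_0; P_n) < +\infty$. After passing to a subsequence achieving the liminf, I would pick $Z_n \in \mathcal{A}^m(B, P_0; P_n)$ with costs at most $V^m(B, P_0; P_n) + 1/n$, so that the uniform bound (\ref{2.4}) holds. Theorem \ref{pp1.1} then yields a further subsequence along which $\{Q_n\}$ and $\{Z_n\}$ are tight, together with a limit $Z = (X, Y) \in \mathcal{A}^m$ satisfying the cost inequality (\ref{1.9.1}). Using that $B$ is weakly closed and that the marginal evaluation maps $Z \mapsto (P^{X(0)}, P^{X(1)}, P^{Y(0)})$ are continuous for weak convergence, the limit inherits $P^{(X(0), X(1))} \in \Pi(P_0, P_1)$ and $P^{Y(0)} \in B$, hence $Z \in \mathcal{A}^m(B, P_0; P_1)$ and $V^m(B, P_0; P_1) \le \liminf_n V^m(B, P_0; P_n)$.

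With convexity and lower semicontinuity in hand, extending $V^m(B, P_0; \cdot)$ by $+\infty$ to the space of finite signed measures equipped with the locally convex topology paired with $C_b(\mathbb{R}^d)$ and applying the Fenchel--Moreau theorem gives $V^m(B, P_0; \cdot) = (V^m(B, P_0; \cdot)^*)^*$, which is the announced identity. The main obstacle is the lower-semicontinuity step: one has to combine the tightness machinery of Theorem \ref{pp1.1} with the closedness of $B$ and the continuity of the marginal maps to guarantee that the weak limit process remains in $\mathcal{A}^m(B, P_0; P_1)$; once this is in place, the convexity argument is a routine mixture construction and the biconjugation step is standard convex analysis.
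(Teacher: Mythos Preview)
Your argument is correct, and the overall strategy---verify that $P\mapsto V^m(B,P_0;P)$ is convex and weakly lower semicontinuous, then invoke biconjugation against $C_b(\mathbb{R}^d)$---is exactly what the paper does. The genuine difference is in the convexity step. You build a Bernoulli mixture: on a product space you randomize between the two near-minimizers with an independent coin $\xi$, check that $W:=\xi W^1+(1-\xi)W^2$ is a Brownian motion for the enlarged filtration, and read off the convex combination of costs by conditioning on $\xi$. The paper instead forms the convex combination $Q_\lambda=(1-\lambda)\,dt\,P^{(Z_0(t),u_{X_0}(t))}+\lambda\,dt\,P^{(Z_1(t),u_{X_1}(t))}$ of the occupation measures, applies the Bogachev--R\"ockner--Shaposhnikov superposition principle to produce $Z_\lambda\in\mathcal{A}^m$ with Markovian drift $u_{X_\lambda}(t)=E^{Q_\lambda}[u\mid t,Z_\lambda(t)]$, and then uses Jensen's inequality (via (A1,\,iv)) on this conditional expectation to bound the cost. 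Your route is more elementary---it avoids the superposition principle entirely---at the cost of producing a non-Markovian control depending on $\xi$; the paper's route simply reuses the machinery already set up for Theorem~\ref{pp1.1} and yields a process of the same Markovian form as the minimizers there. One small point on lower semicontinuity: Theorem~\ref{pp1.1} as stated assumes $Z_n\in\mathcal{A}^m(B,P_0;P_1)$ for a \emph{fixed} $P_1$, whereas you have $Z_n\in\mathcal{A}^m(B,P_0;P_n)$ with $P_n\to P_1$; you should instead invoke Lemma~\ref{lm3.3} (which is stated for tight families $\{P_{1,n}\}$) and rerun the argument of the proof of Theorem~\ref{pp1.1}, which is precisely what the paper indicates.
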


The following proposition explains why we have to consider a family $\{V^m(B, P_0;P_1), B\subset \mathcal{P}(\mathbb{R}^d)\}$ of SOTs when we consider the zero--mass limit.

\begin{proposition}\label{pp2.3}
Suppose that (A2,i) holds and that $m>0$ and $L=L(t;u)$, $(t,u)\in [0,1]\times \mathbb{R}^d$.
Then for any $P_0\in  \mathcal{P}(\mathbb{R}^d)$ and any $f\in  C_b (\mathbb{R}^d)$,
\begin{eqnarray}
&&V^m(\mathcal{P}(\mathbb{R}^d ), P_0;\cdot)^*(f)\label{2.34.1}\\
&=&\sup\left\{E\biggl[f(\eta (1))-\int_0^1 L(t;u_\eta (t))dt \biggr]:
d\eta (t) =K^m(1-t)(u_\eta(t)dt+dW(t))\right\}\notag
\end{eqnarray}
(see (\ref{3.7}) for notation), which does not depend on $P_0$,
where $u_\eta(\cdot)$ is a progressively measurable stochastic process defined on the same filtered probability space as $W(\cdot)$.
In particular, $V^m(P_0,P_1)$ does not necessarily converge to $V^0(P_0,P_1)$ as $m\to 0$.
\end{proposition}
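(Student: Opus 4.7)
The plan is to exploit the linearity of (\ref{1.1})--(\ref{1.2}) under assumption (A2, i) to collapse the $Z = (X,Y)$ formulation into a scalar controlled SDE for a single process $\eta$, and then to read off the non-convergence claim from the resulting $P_0$-independence.

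First I would derive an explicit formula for $X(1)$. Starting from the representation displayed just above Theorem \ref{thm2.3}, with $M_X = W$ thanks to $\sigma \equiv I$, an integration by parts on $\Psi^m(U_X + W)(1)$ (observing that the kernel $(\gamma/m)\exp(-\gamma(1-s)/m)$ is precisely the $s$-derivative of $\exp(-\gamma(1-s)/m)$) yields $\int_0^1 f^m(s)(u_X(s)\,ds + dW(s))$, and hence
\begin{equation*}
X(1) \;=\; X(0) + K^m(1)\,Y(0) + \int_0^1 K^m(1-s)\bigl(u_X(s)\,ds + dW(s)\bigr).
\end{equation*}
Defining $\eta(t) := X(0) + K^m(1)Y(0) + \int_0^t K^m(1-s)(u_X(s)\,ds + dW(s))$, one has $\eta(1) = X(1)$ and $d\eta(t) = K^m(1-t)(u_X(t)\,dt + dW(t))$, so any admissible $Z$ for the left-hand side produces an admissible $\eta$ for the right-hand side with identical payoff; this gives $\le$ in (\ref{2.34.1}).

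For the reverse inequality, I would start with any admissible $\eta$ (with driver $W$ and control $u_\eta$), enlarge its probability space by an independent $X(0) \sim P_0$, and set $Y(0) := (\eta(0) - X(0))/K^m(1)$, which is legitimate since $K^m(1) > 0$. Solving the linear system (\ref{1.1})--(\ref{1.2}) with this initial data, driver $W$, and control $u_X := u_\eta$---explicit via the Lipschitz formula noted after Lemma \ref{lm3.2}---produces $Z \in \mathcal{A}^m$ with $P^{X(0)} = P_0$ and $P^{Y(0)} \in \mathcal{P}(\mathbb{R}^d)$; by the Step~1 identity $X(1) = \eta(1)$, and since the cost integrals coincide, $\ge$ follows, completing (\ref{2.34.1}).

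Since the right-hand side of (\ref{2.34.1}) manifestly does not involve $P_0$, Proposition \ref{pp2.2} applied with the closed convex set $B = \mathcal{P}(\mathbb{R}^d)$ forces $V^m(P_0, P_1) = V^m(\mathcal{P}(\mathbb{R}^d), P_0; P_1)$ to be independent of $P_0$. However $V^0(P_0, P_1)$ is generically $P_0$-sensitive---for instance when $L = |u|^2/2$ and $\sigma_0 = I$, the control $u \equiv 0$ gives $X(1) = X(0) + W(1)/\gamma$, so $V^0(P_0, P_1) = 0$ holds iff $P_1 = P_0 \ast \mathcal{N}(0, (1/\gamma^2) I)$, which pins $P_0$ down once $P_1$ is fixed. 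Thus $V^m(P_0, P_1) \to V^0(P_0, P_1)$ cannot hold for all $(P_0, P_1)$. The one delicate point is the integration-by-parts step, which must treat the stochastic part of $U_X + W$ correctly; this is standard but deserves to be written out cleanly so that the boundary terms at $s = 0$ (where $U_X(0) + W(0) = 0$) and $s = 1$ are accounted for unambiguously.
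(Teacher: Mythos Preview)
Your proof is correct and follows essentially the same route as the paper. The only cosmetic difference is in how the auxiliary process $\eta$ is introduced: the paper sets $\eta(t) := X(t) + K^m(1-t)Y(t)$ and reads off the SDE $d\eta = K^m(1-t)(u_X\,dt + dW)$ directly from It\^o's formula, whereas you construct $\eta$ as the stochastic integral and verify $\eta(1)=X(1)$ via the integration-by-parts identity (which is exactly (\ref{3.9}) of Lemma~\ref{lm3.2} at $t=1$). These are two computations of the same identity, and your reverse-inequality step (enlarging the space by $X(0)\sim P_0$, setting $Y(0)=(\eta(0)-X(0))/K^m(1)$, and solving the linear system) matches the paper verbatim. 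Your concrete example for the non-convergence claim is a bit more explicit than the paper's appeal to Schr\"odinger's problem, but the logic is identical.
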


Let 
\begin{eqnarray}
H(t,z;p)&:=&\sup\{\langle p,u\rangle -L(t,z;u):u\in\mathbb{R}^d\},\quad (t,z,p)\in [0,1]\times \mathbb{R}^{2d}\times\mathbb{R}^{d},\nonumber\\
\\
a(t,z)&:=&\sigma(t,z)\sigma(t,z)^*,\label{2.56}
\end{eqnarray}
where $\sigma(t,z)^*$ denotes the transpose of $\sigma(t,z)$.

For $f\in C_b(\mathbb{R}^d)$,
let $\psi^m(t,z)=\psi^m(t,z;f)$ be a classical solution of the following,
provided it exists (see e.g., \cite{M2021}): for $(t,z=(x,y))\in (0,1)\times \mathbb{R}^d\times\mathbb{R}^d$,
\begin{eqnarray}
0&=&\frac{\partial\psi^m(t,z)}{\partial t}+\frac{1}{2}Trace(a(t,z)D^2_y\psi^m(t,z))\label{2.33.0}\\
&&\qquad 
+\left\langle \frac{1}{m} D_x\psi^m(t,z)-\frac{\gamma}{m}D_y\psi^m(t,z), y\right\rangle+H(t,z;D_y\psi^m(t,z)),\nonumber\\
\quad \psi^m(1,z)&=&f(x).\label{2.34.0}
\end{eqnarray}

In the following example, we show that the following holds:
\begin{equation}\label{2.33.-1}
V^m(\mathcal{P}(\mathbb{R}^d ),P_0;\cdot)^*(f)=\sup\{\psi^m(0,x,y;f):y\in\mathbb{R}^d \},
\end{equation}
which does not depend on either $P_0\in \mathcal{P}(\mathbb{R}^d)$ or $x\in  \mathbb{R}^d$.

\begin{example}\label{ex2.1}
Suppose that (A2, i) holds and that $L=|u|^2/2, u\in\mathbb{R}^d$. 
For $f\in C_b^2 (\mathbb{R}^d)$ and $x\in \mathbb{R}^d$, 
let 
\begin{equation}
\phi(t,x):=
\begin{cases}
\displaystyle \log \int_{\mathbb{R}^d}\exp (f(y)) \frac{\gamma^d}{\sqrt{2\pi (1-t)}^d}
\exp \left(-\frac{\gamma^{2}|x-y|^2}{2(1-t)}\right)dy,&t\in [0,1),\\
f(x),&t=1.
\end{cases}\label{2.33}
\end{equation}
Then $\phi\in C_b^{1,2}([0,1]\times \mathbb{R}^d)$, and 
$\phi(t,x)=\phi(t,x;f)$ is a classical solution of the following:
\begin{eqnarray}
\frac{\partial\phi(t,x)}{\partial t}+\frac{1}{2\gamma^2}\triangle_x\phi(t,x)+\frac{1}{2\gamma^2}|D_x\phi(t,x)|^2&=&0,
 \quad (t,x)\in (0,1)\times\mathbb{R}^d,\nonumber\\
 \label{2.37}\\
 \phi(1,x)&=&f(x), \quad x\in \mathbb{R}^d.
\end{eqnarray}
Let
\begin{equation}
\psi^m(t,z):=\phi(\varphi^m (t),x+K^m(1-t)y),\quad t\in [0,1], z=(x,y)\in \mathbb{R}^d\times \mathbb{R}^d\label{2.36}
\end{equation}
(see (\ref{3.7})--(\ref{3.27}) for notation).
Then  from (\ref{2.33})--(\ref{2.36}), $\psi^m(\cdot,\cdot)=\psi^m(\cdot,\cdot;f)\in C^{1,2} ([0,1]\times \mathbb{R}^{2d})$, satisfies (\ref{2.33.0})--(\ref{2.34.0}) with $a=$ an identity matrix, $H=H(p)=|p|^2/2$, and 
\begin{equation}\label{2.41}
\sup\{\psi^m(0,x,y;f):y\in\mathbb{R}^d \} 
=\sup\{\phi(\varphi^m (0),y;f):y\in\mathbb{R}^d \},\quad x\in\mathbb{R}^d.
\end{equation}

We show that (\ref{2.33.-1}) holds. 
For $Z\in \mathcal{A}^m$,  
\begin{eqnarray}\label{5.10}
&&E[f(X(1))]-E\left[\int_0^1 \frac{1}{2}|u_X(t)|^2dt\right]\\
&\le& E[\phi(\varphi^m (0),X(0)+K^m(1)Y(0);f)]\le\sup\{\phi(\varphi^m (0),y;f):y\in\mathbb{R}^d \},\nonumber
\end{eqnarray}
where the equality holds in the first inequality if $D_y\psi^m(t,Z(t))=u_X(t)$.
Indeed, from (\ref{2.33.0}), by the It\^o formula, 
\begin{eqnarray*}
&&d\psi^m(t,Z(t))\\
&=&\left(\frac{\partial\psi^m(t,Z(t))}{\partial t}+\frac{1}{2}\triangle_y\psi^m(t,Z(t))
+\left\langle \frac{1}{m}Y(t),D_x\psi^m(t,Z(t))\right\rangle\right.\\
&&\qquad \left.+\left\langle u_X(t)-\frac{\gamma}{m}Y(t),D_y\psi^m(t,Z(t))\right\rangle\right)dt\\
&&\qquad +\langle D_y\psi^m(t,Z(t)), dW(t)\rangle\\
&=&\left(-\frac{1}{2}|D_y\psi^m(t,Z(t))|^2+\langle u_X(t), D_y\psi^m(t,Z(t))\rangle\right)dt
+\langle D_y\psi^m(t,Z(t)), dW(t)\rangle\\
&=&\left(-\frac{1}{2}|D_y\psi^m(t,Z(t))-u_X(t)|^2+\frac{1}{2}|u_X(t)|^2\right)dt
+\langle D_y\psi^m(t,Z(t)), dW(t)\rangle.
\end{eqnarray*}

For $y\in\mathbb{R}^d$, 
 from (\ref{5.10}),
\begin{eqnarray}\label{2.67}
&&V^m(\mathcal{P}(\mathbb{R}^d ), P_0;\cdot)^*(f)\\
&\ge &\sup \left\{E\biggl[f(X(1))-\int_0^1 \frac{1}{2}|u_X(t)|^2dt \biggr]:
Z\in \mathcal{A}^m, 
P^{X(0)}=P_0, Y(0)=\frac{y-X(0)}{K^m(1)}\right\}\nonumber\\
&=&\phi(\varphi^m (0),y;f).\nonumber
\end{eqnarray}
Indeed, the following has a unique strong solution:
\begin{eqnarray*}
dX(t)&=&\frac{1}{m}Y(t)dt,\\
dY(t)&=&\left\{D_y\psi^m(t,Z(t))-\frac{\gamma }{m}Y(t)\right\}dt+dW(t),\quad 0< t<1,
\end{eqnarray*}
since
$$D^2_y\psi^m(t,z)=K^m(1-t)^2D^2\phi(\varphi^m (t),x+K^m(1-t)y),\quad 
t\in [0,1], z=(x,y)\in \mathbb{R}^d\times \mathbb{R}^d
$$
is bounded.

Since $y$ is arbitrary in (\ref{2.67}), (\ref{2.41})--(\ref{2.67}) imply (\ref{2.33.-1}).
\end{example}

We will discuss a meaningful characterization of  $V^m(B,P_0;\cdot)^*(f)$ somewhere else.

\section{Lemmas}\label{sec:3}

In this section, we give technical lemmas.

The following lemma will be used in the proofs of Lemmas \ref{lm3.3}, \ref{lm3.4}, and Theorem \ref{thm2.3}.
We give the proof for completeness  (see (\ref{2.9})  for notation).

\begin{lemma}\label{lm3.1}
Let $m>0$.
(i) For $t\in [0,1]$ and $f\in C([0,t];\mathbb{R}^d)$,
\begin{equation}\label{3.1}
\Psi^{m} (f)(t)\le \|f\|_{\infty,t}\left(1- \exp\left (-\frac{\gamma t}{m} \right) \right),
\end{equation}
where $\|f\|_{\infty,t}:=\sup\{|f(s)|:0\le s\le t\}$.
In particular, for  $f\in C([0,1];\mathbb{R}^d)$,
\begin{equation}\label{3.2}
\|\Psi^{m} (f)\|_{\infty}\le \|f\|_{\infty},
\end{equation}
where $\|f\|_{\infty}:=\|f\|_{\infty,1}$. The following also holds:
\begin{equation}\label{3.3.0}
\lim_{\tilde m\to m}\|\Psi^{\tilde m} (f)-\Psi^{m} (f)\|_{\infty}=0, \quad m>0.
\end{equation}
(ii) For $t\in [0,1], \delta\in (0,1)$,  and $f\in C([0,t];\mathbb{R}^d)$, 
\begin{eqnarray}\label{3.3}
&&\left|\Psi^{m} (f)(t)-\left(1-\exp\left (-\frac{\gamma t}{m} \right)\right)f(t)\right|\\
& \le& \left\{2\|f\|_{\infty,t}\exp\left (-\frac{\gamma \delta}{m} \right)
+\sup_{t-t\wedge\delta\le s\le t}|f(t)-f(s)|\right\}\left(1-\exp\left (-\frac{\gamma t}{m} \right)\right),\nonumber
\end{eqnarray}
\begin{equation}\label{3.4.0}
\exp\left (-\frac{\gamma t}{m} \right)|f(t)|\le \|f\|_{\infty,t\wedge\delta}+\exp\left (-\frac{\gamma \delta}{m} \right)\|f\|_{\infty,t},
\end{equation}
where $t\wedge\delta:=\min (t,\delta )$.
In particular, for $f\in C([0,1];\mathbb{R}^d)$ such that $f(0)=0$,
\begin{equation}\label{3.4}
\|\Psi^{m} (f)-f\|_{\infty}\le 3\|f\|_{\infty}\exp\left (-\frac{\gamma \delta}{m} \right)
+2\sup_{t,s\in [0,1], |t-s|\le\delta}|f(t)-f(s)|\to 0,
\end{equation}
as $m\to 0$ and then $\delta \to 0$.

\end{lemma}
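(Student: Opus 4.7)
The plan is to derive all six estimates from direct manipulations of the exponential kernel $\rho^m(r) := (\gamma/m)\exp(-\gamma r/m)$, viewed as a probability density on $[0,\infty)$, together with the observation that
$$\Psi^m(f)(t) = \int_0^t \rho^m(t-s)\, f(s)\,ds, \qquad \bigl(1 - e^{-\gamma t/m}\bigr) f(t) = \int_0^t \rho^m(t-s)\, f(t)\,ds.$$

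For (3.1) I would move the absolute value inside the defining integral, bound $|f(s)|$ by $\|f\|_{\infty,t}$, and compute the explicit mass $1 - e^{-\gamma t/m}$ of $\rho^m$ on $[0,t]$; (3.2) then follows by taking $t=1$. For (3.3.0) I would write the difference as $\int_0^t (\rho^{\tilde m} - \rho^m)(t-s)\, f(s)\,ds$ and estimate it, uniformly in $t \in [0,1]$, by $\|f\|_\infty\,\|\rho^{\tilde m} - \rho^m\|_{L^1(\mathbb{R}_+)}$ via the substitution $r = t-s$. Since $\rho^{\tilde m}$ and $\rho^m$ are probability densities on $[0,\infty)$ with $\rho^{\tilde m} \to \rho^m$ pointwise as $\tilde m \to m$, Scheff\'e's lemma forces $\|\rho^{\tilde m} - \rho^m\|_{L^1} \to 0$, yielding the claim.

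For (3.3) I would rewrite the left side as $\int_0^t \rho^m(t-s)(f(s) - f(t))\,ds$ and split the integration at $s = t - t\wedge\delta$. On the inner piece $[t - t\wedge\delta, t]$, bound $|f(s)-f(t)|$ by the stated oscillation and the mass of $\rho^m$ there by $1 - e^{-\gamma t/m}$; on the outer piece $[0, t - t\wedge\delta]$, bound $|f(s)-f(t)|$ by $2\|f\|_{\infty,t}$ and check directly that the mass of $\rho^m$ there equals $e^{-\gamma(t\wedge\delta)/m} - e^{-\gamma t/m}$, which factors as at most $e^{-\gamma\delta/m}\bigl(1 - e^{-\gamma t/m}\bigr)$ (trivially zero when $t\le \delta$). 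For (3.4.0), a case split suffices: if $t\le\delta$ then $\|f\|_{\infty, t\wedge\delta} = \|f\|_{\infty, t}$ already absorbs $|f(t)|$; if $t>\delta$ then $e^{-\gamma t/m}\le e^{-\gamma\delta/m}$ handles the bound through the second term.

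Finally for (3.4) I would apply the triangle inequality $|\Psi^m(f)(t) - f(t)| \le |\Psi^m(f)(t) - (1 - e^{-\gamma t/m}) f(t)| + e^{-\gamma t/m}|f(t)|$. The first summand is controlled by (3.3) with $\|f\|_{\infty,t}\le \|f\|_\infty$, the inner oscillation being majorized by $\sup_{|r-s|\le \delta}|f(r)-f(s)|$ since on the inner piece $|t-s|\le \delta$. For the second summand, invoke $f(0)=0$ to rewrite $\|f\|_{\infty, t\wedge\delta} = \sup_{0\le s\le t\wedge\delta}|f(s) - f(0)| \le \sup_{|r-s|\le\delta}|f(r)-f(s)|$ and combine with (3.4.0). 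Adding the two yields the claimed bound $3\|f\|_\infty e^{-\gamma\delta/m} + 2\sup_{|r-s|\le\delta}|f(r)-f(s)|$, and sending first $m\to 0$ (killing the exponential) then $\delta\to 0$ (by uniform continuity of $f$ on $[0,1]$) gives the convergence. The only delicate point is (3.3.0): one must resist differentiating $\Psi^m$ in $m$, since the resulting pointwise estimate carries a $1/m$-type blow-up, and instead work at the level of the $L^1$-distance between exponential kernels, which is perfectly behaved thanks to Scheff\'e.
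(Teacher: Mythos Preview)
Your proof is correct and, apart from the treatment of (\ref{3.3.0}), matches the paper's argument step for step: both prove (\ref{3.1}) by pulling $\|f\|_{\infty,t}$ out of the kernel integral, both prove (\ref{3.3}) by writing $\Psi^m(f)(t)-(1-e^{-\gamma t/m})f(t)=\int_0^t\rho^m(t-s)(f(s)-f(t))\,ds$ and splitting at $t-t\wedge\delta$, both handle (\ref{3.4.0}) by the same two-case distinction, and both assemble (\ref{3.4}) from (\ref{3.3}) and (\ref{3.4.0}) via the triangle inequality and $f(0)=0$.

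The one genuine difference is (\ref{3.3.0}). The paper does \emph{not} appeal to Scheff\'e; it writes $\rho^{\tilde m}(r)-\rho^m(r)=(\gamma/\tilde m-\gamma/m)e^{-\gamma r/\tilde m}+(\gamma/m)(e^{-\gamma r/\tilde m}-e^{-\gamma r/m})$ and bounds each summand pointwise on $r\in[0,1]$ by a constant multiple of $|1/\tilde m-1/m|$, then integrates over $[0,t]\subset[0,1]$. This is exactly the kind of ``differentiate in $m$'' estimate you caution against, and it works fine here because $m>0$ is fixed: the resulting bound $\gamma(1+\gamma/m)\,|1/\tilde m-1/m|$ is perfectly well-behaved as $\tilde m\to m$. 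Your Scheff\'e argument is a clean alternative that avoids tracking constants and would generalize more readily (e.g.\ to other approximate-identity kernels), at the mild cost of invoking an external lemma; the paper's approach is completely elementary and gives an explicit rate. Either route is sound.
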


\begin{proof}
(\ref{3.1}) is true, since 
\begin{equation}\label{3.6}
\int_0^t \frac{\gamma}{m}\exp\left(-\frac{\gamma (t-s)}{m} \right)ds
=\left(1-\exp\left (-\frac{\gamma t}{m} \right)\right).
\end{equation}
 (\ref{3.3.0}) can be proven easily from the following: for $s,t\in [0,1]$ such that $ s\le t$,
\begin{eqnarray*}
&&\left|\frac{\gamma}{m_{n_{k}}}\exp\left(-\frac{\gamma(t-s)}{m_{n_{k}}}\right)-
\frac{\gamma}{m}\exp\left(-\frac{\gamma(t-s)}{m_{n_{k}}}\right)\right|
\le\gamma\left|\frac{1}{m_{n_k}}-\frac{1}{m}\right|, \\
&&\left|\exp\left(-\frac{\gamma(t-s)}{m_{n_{k}}}\right)-
\exp\left(-\frac{\gamma(t-s)}{m}\right)\right|\\
& \le& \exp\left(-\frac{\gamma(t-s)}{\max(m_{n_{k}},m)}\right)
\left(1-\exp\left(-\left|\frac{1}{m}-\frac{1}{m_{n_k}}\right|\gamma(t-s)\right)\right)
\le \left|\frac{1}{m}-\frac{1}{m_{n_k}}\right|\gamma.
\end{eqnarray*}

We prove  (\ref{3.3}).
From (\ref{3.6}),
\begin{eqnarray*}
&&\Psi^{m} (f)(t)-\left(1-\exp\left (-\frac{\gamma t}{m} \right)\right)f(t)\\
&=&\left(\int_0^{t-t\wedge\delta}+ \int_{t-t\wedge\delta}^t\right)\frac{\gamma}{m}
\exp\left(-\frac{\gamma (t-s)}{m} \right)(f(s)-f(t))ds,
\end{eqnarray*}
\begin{eqnarray*}
&&\int_0^{t-t\wedge\delta}\frac{\gamma}{m}\exp\left(-\frac{\gamma (t-s)}{m} \right)|f(s)-f(t)|ds\\
&\le& 2\|f\|_{\infty,t}\exp\left (-\frac{\gamma (t\wedge\delta ) }{m}\right) \left(1-\exp\left (-\frac{\gamma (t-t\wedge\delta)}{m} \right)\right)\\
&&\begin{cases}
=0,&t\wedge\delta=t,\\
\le \displaystyle 2\|f\|_{\infty,t}\exp\left (-\frac{\gamma\delta}{m} \right)\left(1-\exp\left (-\frac{\gamma t}{m} \right)\right),&t\wedge\delta=\delta,
\end{cases}
\end{eqnarray*}
\begin{eqnarray*}
&&\int_{t-t\wedge\delta}^t\frac{\gamma}{m}\exp\left(-\frac{\gamma (t-s)}{m} \right)|f(s)-f(t)|ds\\
& \le& \sup_{t-t\wedge\delta\le s\le t}|f(s)-f(t)|\left(1-\exp\left (-\frac{\gamma (t\wedge\delta )}{m} \right)\right).
\end{eqnarray*}
The following implies (\ref{3.4.0}):
\begin{eqnarray*}
\exp\left (-\frac{\gamma t}{m} \right)|f(t)|
\le 
\begin{cases}
|f(t\wedge\delta)|,&t\wedge\delta=t,\\
\displaystyle \exp\left (-\frac{\gamma \delta}{m} \right)|f(t)|,&t\wedge\delta=\delta.
\end{cases}
\end{eqnarray*}
 \end{proof}


The following lemma plays a crucial role in the proofs of Lemmas \ref{lm3.3}, \ref{lm3.4}, \ref{lm3.6} and  Theorem \ref{thm2.3} (see (\ref{2.9})--(\ref{2.8}) and (\ref{3.7}) for notation).

\begin{lemma}\label{lm3.2}
For $m>0$, $Z=(X,Y)\in \mathcal{A}^m$, and $t\in[0,1]$,
\begin{eqnarray}
\quad X(t)&=&X(0)+\frac{1}{\gamma}\Psi^{m} (U_X+M_X+Y(0))(t)\label{3.8}\\
&=&X(0)+K^m(t)Y(0)+\int_0^tK^m(t-s)(u_X(s)ds+ \sigma(s,Z(s))dW(s)),\label{3.9}\\
\quad Y(t)&=&\exp \left(-\frac{\gamma}{m}t\right)Y(0)+U_X(t)+M_X(t)-\Psi^{m} (U_X+M_X)(t)\label{3.10}\\
&=&\exp \left(-\frac{\gamma}{m}t\right)Y(0)+\int_{0}^{t}\exp\left(-\frac{\gamma (t-s)}{m}\right)(u_X(s)ds +\sigma(s,Z(s))dW(s)).\nonumber\\
\label{3.11}
\end{eqnarray}
\end{lemma}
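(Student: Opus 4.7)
The plan is to first solve the linear SDE (\ref{1.2}) for $Y$ explicitly via an integrating factor, then derive the corresponding expression for $X$ using (\ref{1.1}), and finally reconcile the two representations in each line via integration by parts and stochastic Fubini.

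First I would apply It\^o's formula to the process $t\mapsto \exp(\gamma t/m)Y(t)$. Because the exponential weight is chosen precisely to cancel the drag term $-(\gamma/m)Y\,dt$ in (\ref{1.2}), this gives
\begin{equation*}
d\bigl(\exp(\gamma t/m)Y(t)\bigr)=\exp(\gamma t/m)\bigl(u_X(t)\,dt+\sigma(t,Z(t))\,dW(t)\bigr).
\end{equation*}
Integrating from $0$ to $t$ and multiplying by $\exp(-\gamma t/m)$ yields (\ref{3.11}) directly. To pass from (\ref{3.11}) to (\ref{3.10}), I would perform integration by parts on $\int_0^t \exp(-\gamma(t-s)/m)\,dg(s)$ with $g=U_X+M_X$; since the exponential weight is of finite variation in $s$, the boundary term contributes $g(t)$ and the integral against $d_s\exp(-\gamma(t-s)/m)=(\gamma/m)\exp(-\gamma(t-s)/m)\,ds$ is, by definition (\ref{2.9}), exactly $\Psi^m(U_X+M_X)(t)$.

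Next, to recover the $X$-formulas, I would substitute (\ref{3.11}) into $X(t)=X(0)+(1/m)\int_0^t Y(s)\,ds$, which follows from (\ref{1.1}). The deterministic part contributes $(1/m)\int_0^t \exp(-\gamma s/m)Y(0)\,ds = K^m(t)Y(0)$ by direct computation and the definition (\ref{3.7}). For the driving part, stochastic Fubini gives
\begin{equation*}
\frac{1}{m}\int_0^t\!\!\int_0^s\exp(-\gamma(s-r)/m)\bigl(u_X(r)\,dr+\sigma(r,Z(r))\,dW(r)\bigr)\,ds
=\int_0^t K^m(t-r)\bigl(u_X(r)\,dr+\sigma(r,Z(r))\,dW(r)\bigr),
\end{equation*}
after evaluating the inner $s$-integral $\int_r^t (1/m)\exp(-\gamma(s-r)/m)\,ds = K^m(t-r)$. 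This yields (\ref{3.9}). Finally, (\ref{3.8}) follows by noting that $\Psi^m(Y(0))(t)=\gamma K^m(t)Y(0)$ since $Y(0)$ is constant in $s$, together with the identity $(1/\gamma)\Psi^m(U_X+M_X)(t)=\int_0^t K^m(t-r)(u_X(r)\,dr+\sigma(r,Z(r))\,dW(r))$ obtained by the same Fubini exchange applied to $U_X(s)=\int_0^s u_X(r)\,dr$ and $M_X(s)=\int_0^s\sigma(r,Z(r))\,dW(r)$.

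The only nontrivial technical point is the application of the stochastic Fubini theorem to the $dW$-integral; this is legitimate because $\sigma$ is bounded by assumption (A1,i) (and in any case $M_X$ is a square-integrable continuous martingale by the definition of $\mathcal{A}^m$), so the iterated integrals against $dW$ are well defined and can be exchanged. Everything else is bookkeeping with the explicit exponential kernel and the definition of $\Psi^m$ and $K^m$.
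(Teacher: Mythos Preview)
Your argument is correct. The route you take differs from the paper's in the order and in one technical device. The paper first integrates (\ref{1.2}) over $[0,t]$ and uses (\ref{1.1}) to obtain the linear ODE
\[
\frac{d}{dt}\bigl(X(t)-X(0)\bigr)=-\frac{\gamma}{m}\bigl(X(t)-X(0)\bigr)+\frac{1}{m}\bigl(U_X(t)+M_X(t)+Y(0)\bigr),
\]
solves it to get (\ref{3.8}), derives (\ref{3.9}) from (\ref{3.8}) by a single integration by parts against the semimartingale $U_X+M_X$ (this is the identity you also use at the end), and then recovers (\ref{3.10})--(\ref{3.11}) by differentiating. You instead solve the $Y$-equation directly via the integrating factor to get (\ref{3.11}), then pass to (\ref{3.10}) by integration by parts, and finally obtain (\ref{3.9})--(\ref{3.8}) by integrating $Y/m$ and invoking stochastic Fubini. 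The paper's ordering has the minor advantage that it never needs stochastic Fubini (the only interchange is the integration-by-parts identity $\int_0^t\{-\frac{d}{ds}K^m(t-s)\}(U_X(s)+M_X(s))\,ds=\int_0^t K^m(t-s)\,d(U_X+M_X)(s)$, with vanishing boundary terms), while your approach is perhaps the more natural ``solve the linear SDE first'' route; your justification of Fubini via boundedness of $\sigma$ is adequate.
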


\begin{proof}
We  prove (\ref{3.8}).
Integrating (\ref{1.2}) in $t$, from (\ref{1.1}),
\begin{equation*}
\frac{d}{dt} (X(t)-X(0))=-\frac{\gamma}{m} (X(t)-X(0))+\frac{1}{m}(U_X(t)+M_X(t)+Y(0)),
\end{equation*}
which implies  (\ref{3.8}).

(\ref{3.8}) and the following imply (\ref{3.9}) (see (\ref{3.7}) for notation): by the integration by parts,
\begin{eqnarray}\label{3.11.0}
\frac{1}{\gamma}\Psi^{m} (U_X+M_X)(t)&=&\int_0^t \frac{1}{m}\exp\left(-\frac{\gamma (t-s)}{m} \right)\left(U_X(s)+M_X(s)\right)ds\qquad\\
&=&\int_0^t \left\{-\frac{d}{ds}K^m(t-s)\right\}\left(U_X(s)+M_X(s)\right)ds\nonumber\\
&=&\int_0^t K^m(t-s)\left(u_X(s)ds+ \sigma(s,Z(s))dW(s)\right),\nonumber
\end{eqnarray}
since $K^m(0)=0$ and $U_X(0)=M_X(0)=0$.

Differentiate (\ref{3.8}) and we obtain (\ref{3.10}) from (\ref{1.1}).

From (\ref{3.10}) and (\ref{3.11.0}), we obtain  (\ref{3.11}), since
$$\exp\left(-\frac{\gamma (t-s)}{m}\right)=1-\gamma K^m(t-s),\quad 0\le s\le t\le 1.$$
\end{proof}

The following lemma plays a crucial role in the proofs of Lemma \ref{lm3.4}, Theorem \ref{pp1.1}, and Theorem \ref{thm2.1}.

\begin{lemma}\label{lm3.3}
Suppose that (A1, i, ii) holds and that $\{m_n\}_{n\ge 1}$ is a bounded sequence of positive numbers.
Then for any tight families $\{P_{0,n}\}_{n\ge 1}, \{P_{1,n}\}_{n\ge 1}\subset\mathcal{P}(\mathbb{R}^d)$ and any $\{Z_n=(X_n, Y_n)\in\mathcal{A}^{m_n}(P_{0,n},P_{1,n})\}_{n\ge 1}$ such that (\ref{2.4}) holds,
$\{Q_n (dt\hbox{ }dz\hbox{ }du):=dtP^{(Z_n(t),u_{X_n}(t))}(dz\hbox{ }du)\}_{n\ge 1}$ 
and $\{(Y_n(0), U_{X_n},M_{X_n})\}_{n\ge 1}$ are tight.
If there exists a positive constant $C$ such that $m_n\ge C, n\ge 1$, 
then $\{Z_n\}_{n\ge 1}$ is tight.
If $m_n\to 0$ as $n\to\infty$, then
$\{Z_n\}_{n\ge 1}$ is tight if and only if 
$$\lim_{n\to\infty}Y_n(0)=0,\quad \hbox{in law}.$$


\end{lemma}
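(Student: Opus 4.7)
The plan is to extract uniform bounds from (\ref{2.4}) via the superlinearity (A1, ii), convert them through the representations of Lemma \ref{lm3.2} into tightness of each building block, and then use Lemma \ref{lm3.1} to handle $\Psi^{m_n}$ in the two regimes for $m_n$.

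First I would establish uniform integrability of $u_{X_n}$. Since $|u|\le R+L(t,z;u)/C_{1,R}$ on $\{|u|\ge R\}$, (A1, ii) and (\ref{2.4}) give
\begin{equation*}
\sup_n E\left[\int_0^1 |u_{X_n}(t)|\,\mathbf 1_{\{|u_{X_n}(t)|\ge R\}}\,dt\right]\le \frac{1}{C_{1,R}}\sup_n E\left[\int_0^1 L(t,Z_n(t);u_{X_n}(t))\,dt\right]\longrightarrow 0.
\end{equation*}
This yields tightness of the $(t,u)$-marginal of $\{Q_n\}$ and equicontinuity of $\{U_{X_n}\}$ (with $U_{X_n}(0)=0$), hence tightness of $\{U_{X_n}\}$ in $C([0,1];\mathbb R^d)$; tightness of $\{M_{X_n}\}$ in $C([0,1];\mathbb R^d)$ follows from $\|\sigma\|_\infty<\infty$ by BDG/Kolmogorov. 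For $\{Y_n(0)\}$ I would evaluate (\ref{3.9}) at $t=1$ and solve:
\begin{equation*}
Y_n(0)=\frac{1}{K^{m_n}(1)}\left(X_n(1)-X_n(0)-\int_0^1 K^{m_n}(1-s)(u_{X_n}(s)\,ds+\sigma(s,Z_n(s))\,dW(s))\right);
\end{equation*}
since $\{m_n\}$ is bounded above, $K^{m_n}(1)$ is uniformly bounded below by a positive constant, and $|K^{m_n}|\le 1/\gamma$ makes the drift integral uniformly $L^1$-bounded and the stochastic integral uniformly $L^2$-bounded, while $P_{0,n},P_{1,n}$ are tight. Plugging these tight families into (\ref{3.8}) and (\ref{3.10}) with $\|\Psi^{m_n}(f)\|_\infty\le \|f\|_\infty$ (Lemma \ref{lm3.1}(i)) yields $\|X_n\|_\infty\le |X_n(0)|+\gamma^{-1}(|Y_n(0)|+\|U_{X_n}\|_\infty+\|M_{X_n}\|_\infty)$ and $\|Y_n\|_\infty\le |Y_n(0)|+2(\|U_{X_n}\|_\infty+\|M_{X_n}\|_\infty)$, so the law of $Z_n(t)$ is uniformly tight in $(t,z)$ and tightness of $\{Q_n\}$ on $[0,1]\times\mathbb R^{2d}\times\mathbb R^d$ is complete.

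In the regime $m_n\ge C>0$ I would prove $C$-tightness of $\{Z_n\}$ via a Lipschitz estimate. Direct differentiation gives $|K^{m_n\prime}(t)|\le 1/m_n\le 1/C$ and $|(\Psi^{m_n}(f))'(t)|\le (2\gamma/m_n)\|f\|_\infty\le (2\gamma/C)\|f\|_\infty$, so (\ref{3.8})--(\ref{3.10}) make $X_n$ and the absolutely continuous part of $Y_n$ Lipschitz with random Lipschitz constant linear in the already-tight $|Y_n(0)|,\|U_{X_n}\|_\infty,\|M_{X_n}\|_\infty$; the remaining modulus of $Y_n$ is controlled by the moduli of $U_{X_n}$ and $M_{X_n}$. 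The probabilistic Arzelà--Ascoli criterion then finishes this case.

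The delicate case is $m_n\to 0$, and the engine is Lemma \ref{lm3.1}(ii): the bound (\ref{3.4}) converges to $0$ as $m\to 0$ and then $\delta\to 0$ uniformly on any equicontinuous, uniformly bounded family of $f$ with $f(0)=0$. Tightness of $(U_{X_n},M_{X_n})$ confines these processes to such families with probability close to $1$, so
\begin{equation*}
\bigl\|\Psi^{m_n}(U_{X_n}+M_{X_n})-(U_{X_n}+M_{X_n})\bigr\|_\infty\longrightarrow 0\quad\text{in probability.}
\end{equation*}
Substituting this into (\ref{3.10}) leaves $Y_n(t)-\exp(-\gamma t/m_n)Y_n(0)$ as an object that converges to a tight limit in $C$, so $C$-tightness of $\{Y_n\}$ is equivalent to $C$-tightness of $\{\exp(-\gamma\cdot/m_n)Y_n(0)\}$. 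By (\ref{1.14}) the pointwise limit of the deterministic kernel is $\mathbf 1_{\{0\}}$: if $Y_n(0)$ did not converge to $0$ in law, a subsequential weak limit of $\exp(-\gamma\cdot/m_n)Y_n(0)$ in $C([0,1])$ would take a nonzero value at $t=0$ (the weak limit of $Y_n(0)$) but vanish on $(0,1]$, contradicting continuity; conversely if $Y_n(0)\to 0$ in law, $\sup_t|\exp(-\gamma t/m_n)Y_n(0)|=|Y_n(0)|\to 0$, so $\{Y_n\}$ is tight, and $\{X_n\}$ follows from (\ref{3.8}) because $|K^{m_n}(t)Y_n(0)|\le |Y_n(0)|/\gamma\to 0$ while $\Psi^{m_n}(U_{X_n}+M_{X_n})\to U_{X_n}+M_{X_n}$ in $C$. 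The main obstacle is precisely the transfer from the deterministic statement of Lemma \ref{lm3.1}(ii) to the random, $n$-dependent input $U_{X_n}+M_{X_n}$; this requires isolating a compact subset of $C([0,1];\mathbb R^d)$ on which both the sup norm and the modulus of continuity are uniformly bounded in $n$, and then applying the two-parameter limit in (\ref{3.4}).
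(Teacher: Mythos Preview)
Your argument is correct and hits every claim in the lemma; the key inputs (superlinearity via $C_{1,R}$, the representations (\ref{3.8})--(\ref{3.11}), and the estimates of Lemma~\ref{lm3.1}) are exactly the ones the paper uses. The organization, however, differs from the paper's in a way worth noting.

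The paper, after establishing tightness of $(X_n(0),Y_n(0),U_{X_n},M_{X_n})$ just as you do, passes to a weakly convergent subsequence and invokes Skorokhod's theorem to upgrade to almost--sure convergence of copies $(\tilde X_k(0),\tilde Y_k(0),\tilde U_k,\tilde M_k)$. It then treats both regimes by feeding these a.s.\ limits into (\ref{3.8}) and (\ref{3.10}): for $m_n\ge C$ it uses the continuity of $m\mapsto\Psi^m$ recorded in (\ref{3.3.0}), and for $m_n\to 0$ it applies (\ref{3.4}) directly to the fixed limit $\tilde U+\tilde M$. Your route avoids Skorokhod entirely: in the bounded--below case you differentiate $\Psi^{m_n}$ to get a Lipschitz bound $|(\Psi^{m_n}f)'|\le (2\gamma/C)\|f\|_\infty$ and apply the probabilistic Arzel\`a--Ascoli criterion; in the $m_n\to 0$ case you exploit that (\ref{3.4}) is uniform over equicontinuous bounded sets and use tightness of $U_{X_n}+M_{X_n}$ to confine the input to such a set with high probability. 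Your approach is more elementary and makes the role of (\ref{3.4}) as a uniform statement explicit, at the cost of the two--parameter $\delta,m$ limit you flag at the end; the paper's approach trades that for the Skorokhod embedding but then only needs (\ref{3.4}) pointwise on a single continuous path. Either way, the equivalence of tightness of $\{Y_n\}$ with $Y_n(0)\to 0$ in law is settled by the same discontinuity (\ref{1.14}).
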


\begin{proof}
We first prove that $\{Q_n (dt\hbox{ }dz\hbox{ }du)\}_{n\ge 1}$ is tight.
$[0,1]$ is compact.
For $k\ge 1$ and $R>0$, let 
$$B_{k,R}:=\{z\in \mathbb{R}^{k}: |z|\le R\}.$$ 
Then the following holds uniformly in $n$:
\begin{eqnarray}
Q_n ([0,1]\times B_{2d,R}^c\times \mathbb{R}^{d})&=&\int_0^1 P(|Z_n(t)|> R)dt
\to 0, \quad R\to \infty,\label{1.10.1}\\
Q_n ([0,1]\times \mathbb{R}^{2d}\times B_{d,R}^c)&=&\int_0^1 P(|u_{X_n}(t)|> R)dt\to 0, 
\quad R\to \infty.\qquad\label{1.11.1}
\end{eqnarray}

We first prove (\ref{1.10.1}). For $t\in [0,1]$,
\begin{eqnarray*}
|Z_n(t)|& \le& |X_n(t)|+|Y_n(t)|,\\
|X_n(t)|& \le& |X_n(0)|+ \frac{1}{\gamma}\left(\int_0^1|u_{X_n}(s)|ds
+\sup_{0\le \alpha\le 1}\left|\int_0^\alpha\sigma (s, Z_n(s))dW(s)\right|+|Y_n(0)|\right),\\
|Y_n(t)|& \le& |Y_n(0)|+ 2\left(\int_0^1|u_{X_n}(s)|ds
+\sup_{0\le \alpha\le 1}\left|\int_0^\alpha\sigma (s, Z_n(s))dW(s)\right|\right)
\end{eqnarray*}
from (\ref{3.2}), (\ref{3.8}), and (\ref{3.10}).

$\{|X_n(0)|\}_{n\ge 1}$ is tight since $Z_n=(X_n, Y_n)\in\mathcal{A}^{m_n}(P_{0,n},P_{1,n}), n\ge 1$.

$\{|Y_n(0)|\}_{n\ge 1}$ is also tight from (A1, i, ii) since $Z_n=(X_n, Y_n)\in\mathcal{A}^{m_n}(P_{0,n},P_{1,n}), n\ge 1$.
Indeed, substituting $t=1$ in (\ref{3.8}),
\begin{eqnarray}\label{3.14}
&&\frac{1}{\gamma}\left(1-\exp\left(-\frac{\gamma}{m_n}\right)\right)
|Y_n(0)|\\
&\le& |X_n(1)|+|X_n(0)| +\frac{1}{\gamma}\left(1-\exp\left(-\frac{\gamma}{m_n}\right)\right)\nonumber\\
&&\qquad \times \left(\int_0^1|u_{X_n}(s)|ds
+\sup_{0\le t\le 1}\left|\int_0^t\sigma (s, Z_n(s))dW(s)\right|\right)
\nonumber
\end{eqnarray}
from (\ref{3.7}) and (\ref{3.1}).
The expectation of the third line in (\ref{3.14}) is bounded in $n$, since for  a measurable set $B\subset [0,1]$,
\begin{eqnarray}\label{4.3}
\int_B |u_{X_n}(s)|ds& \le& 
R|B|+\frac{1}{C_{1,R}}\int_0^1 L(t,Z_n(t);u_{X_n} (t))dt,\quad R>0,
\end{eqnarray}
where $|B|$ denote the Lebesgue measure  of $B$, and since
\begin{equation}\label{3.18}
E\left[\sup_{0\le t\le 1}\left|\int_0^t\sigma (s, Z_n(s))dW(s)\right|^2\right]
\le 4E\left[\int_0^1Trace (a (s, Z_n(s)))ds\right]
\end{equation}
by Doob's inequality (see (\ref{2.56}) for notation) .

The tightness of $\{|X_n(0)|\}_{n\ge 1}$ and $\{|Y_n(0)|\}_{n\ge 1}$ and  (\ref{4.3})--(\ref{3.18}) 
also imply the tightness of $\{\sup_{0\le s\le 1} |X_n(s)|\}_{n\ge 1}$ and
$\{\sup_{0\le s\le 1} |Y_n(s)|\}_{n\ge 1}$, and  (\ref{1.10.1}) holds.

From (\ref{4.3}), (\ref{1.11.1}) can be proven by Chebychev's inequality:
$$\int_0^1 P(|u_{X_n}(t)|> R)dt
\le \int_0^1\frac{1}{R}E[|u_{X_n}(t)|]dt.
$$

From (\ref{4.3})--(\ref{3.18}), and (A1, i), 
$\{( X_n(0),Y_n(0), U_{X_n},M_{X_n}),0\le t\le 1\}_{n\ge 1}$ is also tight 
 (see \cite{2-JS}, p. 356, Theorem 4.5 and p. 363, Theorem 5.10).
 $\{m_n\}_{n\ge 1}$ is  bounded by assumtion.
Take a convergent subsequence $\{m_{n_k}\}_{k\ge 1}$ and a weakly convergent subsequence 
$\{(X_{n_k}(0), Y_{n_k}(0), U_{X_{n_k}},M_{X_{n_k}})\}_{k\ge 1}$,
and denote the limit by $m$ and  $(X(0), Y(0),U,M)$, respectively.
By Skorokhod's theorem, on a probability space, 
there exist $\mathbb{R}^d\times\mathbb{R}^d\times C([0,1];\mathbb{R}^d)\times C([0,1];\mathbb{R}^d)$--valued random variables $(\tilde X_{k}(0), \tilde Y_{k}(0), \tilde U_{k},\tilde M_{k}), k\ge 1$ and
$(\tilde X(0), \tilde Y(0),\tilde U,\tilde M)$ such that
\begin{eqnarray*}
P^{(\tilde X_{k}(0), \tilde Y_{k}(0), \tilde U_{k},\tilde M_{k})}&=&
P^{(X_{n_k}(0), Y_{n_k}(0), U_{X_{n_k}},M_{X_{n_k}})},\quad k\ge 1\\
P^{(\tilde X(0), \tilde Y(0),\tilde U,\tilde M)}&=&P^{(X(0), Y(0),U,M)},\\
(\tilde X_{k}(0), \tilde Y_{k}(0), \tilde U_{k},\tilde M_{k}) &\to& (\tilde X(0), \tilde Y(0),\tilde U,\tilde M),\quad k\to\infty, \quad {\rm a.s..}
\end{eqnarray*}

Define $\{(\tilde X_{k},\tilde Y_{k})\}_{k\ge 1}$ by (\ref{3.8}) and (\ref{3.10})
with $(m,X(0), Y(0), U_{X},M_{X})$ replaced by $(m_k,\tilde X_{k}(0), \tilde Y_{k}(0), \tilde U_{k},\tilde M_{k})$.

If  $m_n\ge C, n\ge 1$, then 
$m\ne 0$ and $\{(\tilde X_{k},\tilde Y_{k})\}_{k\ge 1}$ is also convergent a.s. from Lemmas \ref{lm3.1}  and \ref{lm3.2}.
Indeed, 
\begin{eqnarray*}
&&\|\Psi^{m_{n_k}}(\tilde U_{k}+\tilde M_{k})-\Psi^{m}(\tilde U+\tilde M)\|_\infty\\
& \le& \|\Psi^{m_{n_k}}(\tilde U_{k}+\tilde M_{k}-(\tilde U+\tilde M))\|_\infty+
\|\Psi^{m_{n_k}}(\tilde U+\tilde M)-\Psi^{m}(\tilde U+\tilde M)\|_\infty\to 0,
\end{eqnarray*}
as $k\to\infty$,
from (\ref{3.2})--(\ref{3.3.0}).

Since $\Psi^{m_{n_k}}$ is Lipschitz continuous on $C([0,1];\mathbb{R}^d)$ from Lemma \ref{lm3.1},
the law of $(\tilde X_{k},\tilde Y_{k})$ is the same as $(X_{n_k}, Y_{n_k})$. 
Since  the space $C([0,1];\mathbb{R}^d)$ with the supnorm and $\mathbb{R}^d$ are Polish,
$\{Z_n\}_{n\ge 1}$ is tight. 

If $m_n\to 0$ as $n\to\infty$, then $m=0$ and
\begin{eqnarray}\label{3.18.1}
&&\|\Psi^{m_{n_k}}(\tilde U_{k}+\tilde M_{k})-(\tilde U+\tilde M)\|_\infty\\
& \le& \|\Psi^{m_{n_k}}(\tilde U_{k}+\tilde M_{k}-(\tilde U+\tilde M))\|_\infty
+
\|\Psi^{m_{n_k}}(\tilde U+\tilde M)-(\tilde U+\tilde M)\|_\infty\to 0,\nonumber
\end{eqnarray}
as $k\to\infty$, from (\ref{3.2}) and (\ref{3.4}).
In the same way as the case where $m\ne 0$,
(\ref{3.18.1}) together with (\ref{1.14}), (\ref{3.8}), and (\ref{3.10}) completes the proof.
\end{proof}

For $Z=(X,Y)\in \mathcal{A}^{m}$, let
\begin{equation}\label{3.19.0}
\overline X(t):=X(0)+\frac{1}{\gamma}(U_X(t)+M_X(t)),\quad 0\le t\le 1.
\end{equation}
The following lemma plays a crucial role  in the proofs of Theorem \ref{thm2.1} and Corollary \ref{co2.3}.

\begin{lemma}\label{lm3.4}
Suppose that (A1, i, ii) holds and that $\{m_n\}_{n\ge 1}$ is a sequence of positive real numbers 
that converges to $0$ as $n\to\infty$.
Then, for tight families $\{P_{0,n}\}_{n\ge 1}, \{P_{1,n}\}_{n\ge 1}\subset\mathcal{P}(\mathbb{R}^d)$,
 and $\{Z_n=(X_n, Y_n)\in\mathcal{A}^{m_n}(P_{0,n},P_{1,n})\}_{n\ge 1}$ such that (\ref{2.4}) holds
 and that $Y_n(0)\to 0$ as $n\to\infty$ weakly,
\begin{eqnarray}
\lim_{n\to\infty}\|Y_n\|_\infty&=&0,\quad {\rm in\hbox{ }Prob.},\label{3.19}\\
\lim_{n\to\infty}\|X_{n}-\overline X_{n}\|_\infty&=&0, \quad {\rm in\hbox{ }Prob.}.\label{3.20}
\end{eqnarray}
\end{lemma}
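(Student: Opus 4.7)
The plan is to decompose $Y_n$ and $X_n-\overline X_n$ using Lemma \ref{lm3.2} and then estimate each piece by combining Lemma \ref{lm3.1} with the tightness established in Lemma \ref{lm3.3}.

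From (\ref{3.10}) with $m=m_n$ and $Z=Z_n$,
\begin{equation*}
Y_n(t)=\exp\!\left(-\tfrac{\gamma t}{m_n}\right)Y_n(0)+\bigl(I-\Psi^{m_n}\bigr)(U_{X_n}+M_{X_n})(t),
\end{equation*}
so $\|Y_n\|_\infty\le |Y_n(0)|+\|(I-\Psi^{m_n})(U_{X_n}+M_{X_n})\|_\infty$. Analogously, from (\ref{3.8}) and the definition (\ref{3.19.0}) of $\overline X_n$, together with $\Psi^{m_n}(Y_n(0))(t)=Y_n(0)(1-\exp(-\gamma t/m_n))$,
\begin{equation*}
\gamma\bigl(X_n(t)-\overline X_n(t)\bigr)=\bigl(\Psi^{m_n}-I\bigr)(U_{X_n}+M_{X_n})(t)+Y_n(0)\bigl(1-\exp(-\tfrac{\gamma t}{m_n})\bigr),
\end{equation*}
so $\gamma\|X_n-\overline X_n\|_\infty\le \|(I-\Psi^{m_n})(U_{X_n}+M_{X_n})\|_\infty+|Y_n(0)|$. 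Hence it suffices to show that $|Y_n(0)|\to 0$ in probability and that $\|(I-\Psi^{m_n})(U_{X_n}+M_{X_n})\|_\infty\to 0$ in probability.

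The first is immediate: weak convergence of $Y_n(0)$ to the constant $0$ is equivalent to convergence in probability. For the second, fix $n$ and set $f_n:=U_{X_n}+M_{X_n}$, which satisfies $f_n(0)=0$ pathwise. By the last assertion of Lemma \ref{lm3.1}, for every $\delta\in(0,1)$,
\begin{equation*}
\|(I-\Psi^{m_n})f_n\|_\infty\le 3\|f_n\|_\infty\exp\!\left(-\tfrac{\gamma\delta}{m_n}\right)+2\,\omega(\delta;f_n),
\end{equation*}
where $\omega(\delta;f):=\sup\{|f(t)-f(s)|:|t-s|\le\delta\}$ is the modulus of continuity.

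Now the main subtlety: $\|f_n\|_\infty$ and $\omega(\delta;f_n)$ are random, so we cannot pass $m_n\to 0$ then $\delta\to 0$ in a pathwise uniform way. This is handled by the tightness of $\{f_n\}$ in $C([0,1];\mathbb{R}^d)$ established in Lemma \ref{lm3.3}. Given $\varepsilon,\eta>0$, Prohorov's theorem yields a compact set $K_\eta\subset C([0,1];\mathbb{R}^d)$ with $P(f_n\in K_\eta)\ge 1-\eta$ for all $n$. By Arzel\`a--Ascoli there is a constant $C_\eta<\infty$ with $\|f\|_\infty\le C_\eta$ for $f\in K_\eta$, and a $\delta=\delta(\varepsilon,\eta)>0$ such that $\omega(\delta;f)\le \varepsilon/4$ uniformly on $K_\eta$. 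Since $m_n\to 0$, choose $N$ with $3C_\eta\exp(-\gamma\delta/m_n)\le \varepsilon/2$ for all $n\ge N$. On the event $\{f_n\in K_\eta\}$ the displayed bound gives $\|(I-\Psi^{m_n})f_n\|_\infty\le \varepsilon$, so $P(\|(I-\Psi^{m_n})f_n\|_\infty>\varepsilon)\le \eta$ for all $n\ge N$. Since $\eta$ is arbitrary, (\ref{3.19}) and (\ref{3.20}) follow.

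The only real obstacle is the passage from the pathwise estimate (\ref{3.4}) to a probabilistic statement; this is resolved cleanly by truncating to the compact set supplied by tightness, which is why having tightness of $U_{X_n}+M_{X_n}$ (and not just of its marginals) from Lemma \ref{lm3.3} is essential.
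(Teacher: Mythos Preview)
Your proof is correct and follows the same decomposition via Lemma \ref{lm3.2} and the same key estimate (\ref{3.4}) from Lemma \ref{lm3.1} as the paper. The only difference is in the last step: the paper passes to a weakly convergent subsequence, invokes Skorokhod's theorem to upgrade to almost sure convergence of $(Y_{n_k}(0),U_{X_{n_k}},M_{X_{n_k}})$ on a new probability space, applies (\ref{3.4}) pathwise there, and then uses the subsequence principle to conclude convergence in probability along the full sequence; you instead use Prohorov's theorem together with Arzel\`a--Ascoli to obtain, for each $\eta>0$, a compact set in $C([0,1];\mathbb{R}^d)$ carrying mass $\ge 1-\eta$ uniformly in $n$, on which the estimate (\ref{3.4}) can be applied uniformly. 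Both arguments extract the same information from the tightness of $\{U_{X_n}+M_{X_n}\}$ supplied by Lemma \ref{lm3.3}; yours is slightly more self-contained in that it avoids changing probability spaces and the subsequence bookkeeping, while the paper's route makes the identity $\gamma(X_n-\overline X_n)=-Y_n+Y_n(0)$ (equation (\ref{3.25.0})) more visible and thereby deduces (\ref{3.20}) directly from (\ref{3.19}).
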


\begin{proof}
From Lemmas \ref{lm3.3}, $\{(X_{n},Y_{n}, U_{X_{n}},M_{X_{n}})\}_{n\ge 1}$ is tight.
Take a weakly convergent subsequence
$\{(X_{n_k},Y_{n_k}, U_{X_{n_k}},M_{X_{n_k}})\}_{k\ge 1}$.
In the same way as the proof of Lemmas \ref{lm3.3},
by Skorokhod's theorem, taking a different probability space if necessary, we assume that the convergence is almost sure. 

From (\ref{3.4}) and (\ref{3.10}), 
\begin{eqnarray}\label{3.21.0}
&&\|Y_{n_k}\|_\infty\\
&\le&  \|\Psi^{m_{n_k}}(U_{X_{n_k}}+M_{X_{n_k}})-(U_{X_{n_k}}+M_{X_{n_k}})\|_\infty+|Y_{n_k}(0)|\to 0,\quad k\to\infty, {\rm a.s.},\nonumber
\end{eqnarray}
which also implies that the following holds:
\begin{equation}\label{3.22.0}
\lim_{k\to\infty}\|X_{n_k}-\overline X_{n_k}\|_\infty=0, \quad {\rm a.s.}.
\end{equation}
Indeed,  from (\ref{3.8}) and (\ref{3.10}),
\begin{eqnarray}\label{3.25.0}
X_{n}(t)-\overline X_{n}(t)
&=&\frac{1}{\gamma}\left\{\Psi^{m_{n}}(U_{X_{n}}+M_{X_{n}}+Y_{n}(0))(t)-(U_{X_{n}}(t)+M_{X_{n}}(t))\right\}\nonumber\\
\\
&=&\frac{1}{\gamma} (-Y_{n}(t)+Y_{n}(0)).\nonumber
\end{eqnarray}
Since (\ref{3.21.0})--(\ref{3.22.0}) hold for any weakly convergent subsequence and
since the convergence to $0$ a.s. implies that in probability, (\ref{3.19})--(\ref{3.20}) hold.
\end{proof}

The following lemma will be used in the proofs of Theorem \ref{thm2.3}
 and Corollary \ref{thm2.2}, and is given the proof for completeness
 (see (\ref{2.12})--(\ref{3.27}) for notation).

\begin{lemma}\label{lm3.5}
For $m>0$,
\begin{equation}
0\le \varphi^m(t)-t\le \frac{2m}{\gamma},\quad 0\le t\le 1.
\end{equation}
Equivalently,
\begin{equation}
0\le t-(\varphi^m)^{-1}(t)\le \frac{2m}{\gamma},\quad  \varphi^m(0)\le t\le 1.
\end{equation}
\end{lemma}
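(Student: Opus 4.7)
The plan is to rewrite the difference $\varphi^m(t)-t$ as a single explicit integral and then bound the integrand from both sides. From the definition (\ref{3.27}),
\begin{equation*}
\varphi^m(t)-t=(1-t)-\int_t^1 f^m(s)^2\,ds=\int_t^1\bigl(1-f^m(s)^2\bigr)\,ds,
\end{equation*}
so everything reduces to estimating $1-f^m(s)^2$ on $[t,1]$.

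For the lower bound, I would use (\ref{2.12}), which immediately gives $0\le f^m(s)\le 1$, hence $1-f^m(s)^2\ge 0$, and so $\varphi^m(t)-t\ge 0$. For the upper bound, I would factor
\begin{equation*}
1-f^m(s)^2=(1-f^m(s))(1+f^m(s))\le 2(1-f^m(s))=2\exp\!\left(-\frac{\gamma(1-s)}{m}\right),
\end{equation*}
and then integrate:
\begin{equation*}
\int_t^1 \exp\!\left(-\frac{\gamma(1-s)}{m}\right)ds=\frac{m}{\gamma}\left(1-\exp\!\left(-\frac{\gamma(1-t)}{m}\right)\right)\le\frac{m}{\gamma}.
\end{equation*}
Combining these yields $\varphi^m(t)-t\le 2m/\gamma$, which completes the first inequality.

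For the equivalent second formulation, I would note that $(\varphi^m)'(t)=f^m(t)^2>0$ for $t\in[0,1)$, so $\varphi^m$ is strictly increasing on $[0,1]$ with $\varphi^m(1)=1$, hence its inverse $(\varphi^m)^{-1}$ is well defined on $[\varphi^m(0),1]$. Setting $t=\varphi^m(s)$ in the first inequality turns $\varphi^m(t)-t\in[0,2m/\gamma]$ into $s-(\varphi^m)^{-1}(\varphi^m(s))\cdots$—more precisely, for $t\in[\varphi^m(0),1]$, letting $s=(\varphi^m)^{-1}(t)\in[0,1]$, we get $t-(\varphi^m)^{-1}(t)=\varphi^m(s)-s\in[0,2m/\gamma]$. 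There is no real obstacle here; the whole argument is just two elementary bounds on the integrand and a change of variable, and no deeper property of the Langevin dynamics is needed.
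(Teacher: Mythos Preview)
Your proof is correct and follows essentially the same approach as the paper: both rewrite $\varphi^m(t)-t=\int_t^1(1-f^m(s)^2)\,ds$, observe the integrand is nonnegative, and bound $1-f^m(s)^2\le 2\exp(-\gamma(1-s)/m)$ (you via the factorization $(1-f^m)(1+f^m)$, the paper via direct expansion). You are in fact more explicit than the paper in carrying out the final integration and in justifying the equivalence with the inverse formulation.
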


\begin{proof}
For $t\in [0,1]$,
\begin{eqnarray}\label{3.29}
\varphi^m(t)-t&=&\int_t^1 (1-f^m(s)^2)ds\ge 0.
\end{eqnarray}
The following together with (\ref{3.29}) completes the proof: for $s\in [0,1]$,
\begin{equation}
1-f^m(s)^2=2\exp\left(-\frac{\gamma (1-s)}{m}\right)-\exp\left(-\frac{2\gamma (1-s)}{m}\right)\le 2\exp\left(-\frac{\gamma (1-s)}{m}\right).
\end{equation}
\end{proof}

The following lemma plays a crucial role in the proof of Theorem \ref{thm2.3}.
\begin{lemma}\label{lm3.6}
Suppose that (A2, i) holds and that $m>0$.
Then  for any $P_0,P_1\in\mathcal{P}(\mathbb{R}^d)$, 
any $X\in\mathcal{A}(P_0,P_1)$, and for $Z^m=(X^m, Y^m)\in \mathcal{A}^{m}$ defined by (\ref{2.21})--(\ref{2.24}), the following holds:
for $t\in [0,1)$,
\begin{eqnarray}
X^m(t)&=&\left(1-\frac{K^m(t)}{K^m(1)}\right)X(0)
+f^m(t)\Psi^m\left(\frac{X(\varphi^m(\cdot))}{f^m(\cdot)^2}\right)(t),\label{3.26}\\
Y^m(t)&=&-\frac{\exp(-\gamma t/m)}{K^m(1)}X(0)
+\frac{X(\varphi^m(t))}{K^m(1-t)}
-\gamma\Psi^m\left(\frac{X(\varphi^m(\cdot))}{f^m(\cdot)^2}\right)(t).\qquad\label{2.27}
\end{eqnarray}
\end{lemma}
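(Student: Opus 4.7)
The plan is to reduce everything to the two explicit resolvent formulas of Lemma \ref{lm3.2} and then perform two Itô integration-by-parts with deterministic integrands.

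First I would apply Lemma \ref{lm3.2} to $Z^m=(X^m,Y^m)\in\mathcal{A}^m$, in which (under (A2, i)) the martingale part is $W^m$ and the drift in the $Y^m$-equation is $u_X^m(s)f^m(s)$. This yields
\begin{align*}
X^m(t)&=X(0)+K^m(t)Y^m(0)+\int_0^tK^m(t-s)\bigl(u_X^m(s)f^m(s)\,ds+dW^m(s)\bigr),\\
Y^m(t)&=e^{-\gamma t/m}Y^m(0)+\int_0^te^{-\gamma(t-s)/m}\bigl(u_X^m(s)f^m(s)\,ds+dW^m(s)\bigr).
\end{align*}
Next I would invoke (\ref{3.23}) together with $K^m(1-r)=f^m(r)/\gamma$, which identifies the stochastic ``forcing'' with the differential of the semimartingale $G(s):=X(\varphi^m(s))-X(\varphi^m(0))$ via $u_X^m(s)f^m(s)\,ds+dW^m(s)=\frac{\gamma}{f^m(s)}\,dG(s)$. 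Substituting this in,
$$\int_0^te^{-\gamma(t-s)/m}\bigl(u_X^m f^m\,ds+dW^m\bigr)=\gamma\int_0^t\frac{e^{-\gamma(t-s)/m}}{f^m(s)}\,dG(s),$$
and integration by parts (legitimate because the integrand is deterministic of bounded variation, so no cross-variation arises) gives, using $G(0)=0$,
$$\gamma\frac{G(t)}{f^m(t)}-\gamma\int_0^tG(s)\frac{d}{ds}\!\left(\frac{e^{-\gamma(t-s)/m}}{f^m(s)}\right)ds.$$

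The decisive step is the algebraic identity
$$\frac{d}{ds}\!\left(\frac{e^{-\gamma(t-s)/m}}{f^m(s)}\right)=\frac{\gamma}{m}\cdot\frac{e^{-\gamma(t-s)/m}}{f^m(s)^2},$$
which follows from the quotient rule together with the elementary identity $f^m(s)+e^{-\gamma(1-s)/m}=1$. After this substitution the $G(s)$-integral becomes precisely $\gamma\,\Psi^m(X(\varphi^m(\cdot))/f^m(\cdot)^2)(t)$, plus affine terms coming from the $X(\varphi^m(0))$-piece of $G$; the latter collapse neatly with $\gamma/f^m(t)=1/K^m(1-t)$ and, using $f^m(0)=\gamma K^m(1)$ and the initial condition $Y^m(0)=(X(\varphi^m(0))-X(0))/K^m(1)$, the residual term involving $X(\varphi^m(0))$ cancels and the residual term involving $X(0)$ reduces exactly to $-e^{-\gamma t/m}X(0)/K^m(1)$. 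This yields (\ref{2.27}).

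For (\ref{3.26}) I would run the same scheme with kernel $K^m(t-s)/f^m(s)$ in place of $e^{-\gamma(t-s)/m}/f^m(s)$. The parallel key identity is
$$-\gamma\,\frac{d}{ds}\!\left(\frac{K^m(t-s)}{f^m(s)}\right)=f^m(t)\cdot\frac{\gamma}{m}\cdot\frac{e^{-\gamma(t-s)/m}}{f^m(s)^2},$$
which is again a direct quotient-rule calculation, this time exploiting $(1-e^{-\gamma(1-t)/m})e^{-\gamma(t-s)/m}=e^{-\gamma(t-s)/m}-e^{-\gamma(1-s)/m}$, i.e.\ the telescoping of the two exponentials. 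Because $K^m(0)=0$, the boundary contribution at $s=t$ vanishes and the boundary contribution at $s=0$ combines with $K^m(t)Y^m(0)$ to produce exactly $(1-K^m(t)/K^m(1))X(0)$, finishing (\ref{3.26}). (Alternatively, once (\ref{2.27}) is established, one can verify (\ref{3.26}) by differentiating its right-hand side in $t$ and checking it equals $Y^m(t)/m$, since $X^m$ is absolutely continuous.)

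The main obstacle is bookkeeping rather than analysis: tracking the three affine pieces produced by the integration-by-parts (the $G(t)/f^m(t)$ endpoint term, the $-X(\varphi^m(0))$-piece of $G$, and the initial-value $Y^m(0)$-piece) and checking that, after inserting $f^m(0)=\gamma K^m(1)$ and $Y^m(0)=(X(\varphi^m(0))-X(0))/K^m(1)$, all $X(\varphi^m(0))$-terms cancel and only the stated coefficient of $X(0)$ survives. The two derivative identities themselves are routine once one recognizes the ``unifying'' cancellation $f^m(s)+e^{-\gamma(1-s)/m}=1$.
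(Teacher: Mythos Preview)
Your proposal is correct and follows essentially the same route as the paper. Both start from the resolvent formulas of Lemma \ref{lm3.2}, rewrite the forcing via (\ref{3.23}) as an integral against $dX(\varphi^m(\cdot))$, and then integrate by parts against the deterministic kernels $K^m(t-s)/K^m(1-s)$ and $\exp(-\gamma(t-s)/m)/K^m(1-s)$; your two derivative identities are exactly the paper's (\ref{3.39})--(\ref{3.40}) rewritten with $f^m(s)=\gamma K^m(1-s)$, and the cancellation of the $X(\varphi^m(0))$-terms is handled the same way.
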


\begin{proof}

From (\ref{2.21})--(\ref{2.24}), and (\ref{3.9}), 
\begin{eqnarray}
X^m(t)&=&X(0)+K^m(t)\frac{X(\varphi^m(0))-X(0)}{K^m(1)}\label{4.13.0}\\
&&\qquad +\int_{0}^{t} K^m(t-s)(u_X^m(s)f^m(s)ds +d W^m(s)).\nonumber
\end{eqnarray}

From (\ref{4.13.0}),
 the following implies (\ref{3.26}) (see (\ref{3.7})--(\ref{2.12})
for notation): for $t\in [0,1)$,
\begin{eqnarray}\label{3.40.0}
&&\int_{0}^{t} K^m(t-s)(u_X^m(s)f^m(s)ds +d W^m(s))\\
&=&-\frac{K^m(t)}{K^m(1)}X(\varphi^m(0))
+f^m(t)\int_{0}^{t}
\frac{\gamma}{m}\exp\left(-\frac{\gamma (t-s)}{m}\right)\frac{X(\varphi^m(s))}{f^m(s)^2}ds.\nonumber
 \end{eqnarray}
We prove (\ref{3.40.0}).  
 From (\ref{3.23}), by the integration by parts,
\begin{eqnarray}
&&\int_{0}^{t} K^m(t-s)(u_X^m(s)f^m(s)ds +d W^m(s))\label{4.15.0}\\
&=&\int_{0}^{t} \frac{K^m(t-s)}{K^m(1-s)}dX(\varphi^m(s))\nonumber\\
 &=&\left[\frac{K^m(t-s)}{K^m(1-s)}X(\varphi^m(s))\right]_{s=0}^t
 -\int_{0}^{t} \frac{d}{ds}\left\{\frac{K^m(t-s)}{K^m(1-s)}\right\}X(\varphi^m(s))ds\nonumber\\
 &=&-\frac{K^m(t)}{K^m(1)}X(\varphi^m(0))
 -\int_{0}^{t} \frac{d}{ds}\left\{\frac{K^m(t-s)}{K^m(1-s)}\right\}X(\varphi^m(s))ds,\nonumber
 \end{eqnarray}
 \begin{eqnarray}\label{3.39}
&& -\frac{d}{ds}\left\{\frac{K^m(t-s)}{K^m(1-s)}\right\}\\
&=&\frac{1}{m}\left\{\frac{\exp(-\gamma (t-s)/m)K^m(1-s)-K^m(t-s)\exp(-\gamma (1-s)/m)}{K^m(1-s)^2}\right\},\nonumber
\end{eqnarray}
\begin{eqnarray}\label{3.40}
&&\exp\left(-\frac{\gamma (t-s)}{m}\right)K^m(1-s)-K^m(t-s)\exp\left(-\frac{\gamma (1-s)}{m}\right)\\
&=&\exp\left(-\frac{\gamma (t-s)}{m}\right)K^m(1-t).\nonumber
\end{eqnarray}

From (\ref{2.21})--(\ref{2.24}) and (\ref{3.11}), the following implies (\ref{2.27}) (see (\ref{3.7})--(\ref{2.12})
for notation): for $t\in [0,1)$,
\begin{eqnarray}
&&\int_{0}^{t}\exp\left(-\frac{\gamma (t-s)}{m}\right)(u_X^m(s)f^m(s)ds +d W^m(s))\label{4.23}\\
 &=&\frac{X(\varphi^m(t))}{K^m(1-t)}-\frac{\exp(-\gamma t/m)}{K^m(1)}X(\varphi^m(0))
-\gamma\Psi^m\left(\frac{X(\varphi^m (\cdot))}{f^m(\cdot)^2}\right)(t).\nonumber
\end{eqnarray}
Indeed,
\begin{eqnarray}
Y^m(t)&=&\exp\left(-\frac{\gamma t}{m}\right)\frac{X(\varphi^m(0))-X(0)}{K^m(1)}\label{4.19}\\
&&\qquad +\int_{0}^{t} \exp\left(-\frac{\gamma (t-s)}{m}\right)(u_X^m(s)f^m(s)ds +d W^m(s)).\nonumber
\end{eqnarray}

We prove (\ref{4.23}).
From   (\ref{3.23}), in the same way as (\ref{4.15.0}), by the integration by parts,
\begin{eqnarray}
&&\int_{0}^{t}\exp\left(-\frac{\gamma (t-s)}{m}\right)(u_X^m(s)f^m(s)ds +d W^m(s))\\
&=&\left[\frac{\exp(-\gamma (t-s)/m)}{K^m(1-s)}X(\varphi^m(s))\right]_{s=0}^t
 -\int_{0}^{t} \frac{d}{ds}\left\{\frac{\exp(-\gamma (t-s)/m)}{K^m(1-s)}\right\}X(\varphi^m(s))ds \nonumber\\
 &=&\frac{X(\varphi^m(t))}{K^m(1-t)}-\frac{\exp(-\gamma t/m)}{K^m(1)}X(\varphi^m(0))
-\gamma\Psi^m\left(\frac{X(\varphi^m (\cdot))}{f^m(\cdot)^2}\right)(t)\nonumber
\end{eqnarray}
from (\ref{3.39})--(\ref{3.40}).
Indeed, from (\ref{3.7}),
$$\exp\left(-\frac{\gamma (t-s)}{m}\right)=-\gamma K^m(t-s)+1,\quad 0\le s\le t\le 1,$$
$$\frac{d}{ds}\left\{\frac{1}{K^m(1-s)}\right\}=\frac{1}{mK^m(1-s)^2}
\exp\left(-\frac{\gamma (t-s)}{m}\right)\exp\left(-\frac{\gamma (1-t)}{m}\right).
$$

\end{proof}

\section{Proofs of main results}\label{sec:4}

In this section, we prove our results.

For $m>0$, $t\in [0,1], z=(x,y)
\in  \mathbb{R}^{d}\times \mathbb{R}^{d}, u\in \mathbb{R}^{d}$, and $f\in C^{1,2}([0,1]\times \mathbb{R}^{2d})$,
let 
\begin{eqnarray}
\mathcal{L}^m_{t,z,u}f(t,z)
&:=&\frac{\partial }{\partial t}f(t, z)+\langle D_x f(t, z),\frac{1}{m}y\rangle+\langle D_y f(t, z),u-\frac{\gamma}{m}y\rangle\\
&&\qquad +\frac{1}{2}Trace (a(t,z)D_y^2f(t, z))\nonumber
\end{eqnarray}
(see (\ref{2.56}) for notation), where $D_x:=(\partial/\partial x_i)_{i=1}^d$ and $D_y^2:=(\partial/\partial y_i\partial y_j)_{i,j=1}^d$.

Modifying the idea in \cite{M08,M21} (see also \cite{M2021} and the reference therein), 
we prove Theorem \ref{pp1.1} by Bogachev--R\"ockner--Shaposhnikov's superposition principle (see \cite{Bog21}).

\begin{proof}[Proof of Theorem \ref{pp1.1}]
From Lemma \ref{lm3.3}, $\{Q_n\}_{n\ge 1}$ and $\{Z_n\}_{n\ge 1}$ are tight.
Take weakly convergent subsequences $\{Q_{n_k}\}_{k\ge 1}$ and $\{Z_{n_k}\}_{k\ge 1}$ of $\{Q_n\}_{n\ge 1}$
 and $\{Z_n\}_{n\ge 1}$, respectively.
Let $Q_\infty$ and $Z_\infty$ denote the limits of $Q_{n_k} $ and $\{Z_{n_k}\}_{k\ge 1}$
as $k\to\infty$, respectively.
Then 
\begin{equation}\label{4.2}
Q_\infty(dt\hbox{ }dz\times \mathbb{R}^d)=dtP^{Z_\infty(t)}(dz),
\end{equation}
since 
$$\int_{[0,1]\times \mathbb{R}^{2d}}f(t,z)Q_{n_k}(dt\hbox{ }dz\times \mathbb{R}^d)
=\int_0^1 E[f(t,Z_{n_k}(t))]dt,\quad f\in C_b([0,1]\times \mathbb{R}^{2d}).
$$

We prove (\ref{2.5})--(\ref{2.6}).
For any $f\in C_0^{2}(\mathbb{R}^{2d})$ and $t\in [0,1]$, from (\ref{4.2}),
\begin{eqnarray}\label{4.3.0}
&&\int_{ \mathbb{R}^{2d}}f(z)P^{Z_\infty(t)}(dz)-
\int_{ \mathbb{R}^{2d}}f(z)P^{Z_\infty(0)}(dz)\\
&=&\int_{[0,t]\times \mathbb{R}^{2d}}
\left\{\mathcal{L}^m_{s,z, E^{Q_\infty}[u|s,z]}f(z)\right\}dsP^{Z_\infty(s)}(dz).\nonumber
\end{eqnarray}
Indeed, for any $\varphi\in C_0^{1,2}([0,1]\times \mathbb{R}^{2d})$, by the It\^o formula,  
\begin{eqnarray}\label{4.4}
0&=&\int_{[0,1]}dt
E\left[\mathcal{L}^m_{t,Z_{n_k}(t),u_{X_{n_k}}(t)}\varphi(t,Z_{n_k}(t))\right]\\
&=&\int_{[0,1]\times \mathbb{R}^{2d}\times \mathbb{R}^{d}}
\mathcal{L}^m_{t,z,u}\varphi(t,z)Q_{n_k} (dt\hbox{ }dz\hbox{ }du)\nonumber\\
&\to&\int_{[0,1]\times \mathbb{R}^{2d}\times \mathbb{R}^{d}}
\mathcal{L}^m_{t,z,u}\varphi(t,z)Q_\infty(dt\hbox{ }dz\hbox{ }du),
\quad k\to\infty\nonumber\\
&=&\int_{[0,1]\times \mathbb{R}^{2d}}
\mathcal{L}^m_{t,z, E^{Q_\infty}[u|t,z]}\varphi(t,z)Q_\infty(dt\hbox{ }dz\times \mathbb{R}^d),\nonumber
\end{eqnarray}
from (A1, i, ii).
Here we used the following: from (\ref{2.4}), for $R>0$,
\begin{eqnarray}\label{4.5.0}
&&\int_{[0,1]\times \mathbb{R}^{2d}\times B_{d,R}^c} |u|Q_{n_k} (dt\hbox{ }dz\hbox{ }du)\\
& \le& \frac{1}{C_{1,R}
}\int_{[0,1]\times \mathbb{R}^{2d}\times B_{d,R}^c} L(t,z;u)Q_{n_k} (dt\hbox{ }dz\hbox{ }du)\nonumber\\
&\le&  \frac{1}{C_{1,R}
}
E\left[\int_0^1 L(t,Z_{n_k}(t);u_{X_{n_k}}(t))dt\right] 
\to 0,\nonumber
\end{eqnarray}
as $R\to\infty$, uniformly in $k$ (see the proof of Lemma \ref{lm3.3} for notation).


From (\ref{4.3.0}), Bogachev--R\"ockner--Shaposhnikov's superposition principle implies that 
there exists $Z=(X,Y)\in \mathcal{A}^m$ such that 
\begin{equation}\label{4.6.0}
P^{Z(t)}(dz)=P^{Z_\infty(t)}(dz),\quad 0\le t\le 1,
\end{equation}
and that (\ref{2.5})--(\ref{2.6}) hold (see \cite{Bog21}, Theorem 1.1).
Here notice that from (\ref{4.5.0}), for $R>0$, 
\begin{eqnarray*}
&&\int_{[0,1]\times \mathbb{R}^{2d}}
|E^{Q_\infty}[u|t,z]|Q_\infty(dt\hbox{ }dz\times \mathbb{R}^d)\nonumber\\
&\le& \int_{[0,1]\times \mathbb{R}^{2d}\times \mathbb{R}^{d}} |u|Q_{\infty} (dt\hbox{ }dz\hbox{ }du)\\
& \le& R+\liminf_{k\to\infty}\frac{1}{C_{1,R}
}\int_{[0,1]\times \mathbb{R}^{2d}\times B_{d,R}^c} L(t,z;u)Q_{n_k} (dt\hbox{ }dz\hbox{ }du)<\infty.
\end{eqnarray*}
$P^{X(t)}=P_t, t=0,1$ and $P^{Y(0)}\in B $ from (\ref{4.6.0}), since $Z_{n_k}\in\mathcal{A}^m (B ,P_0;P_1)$
and since $B$ is a closed set.

We prove (\ref{1.9.1}).
Taking a subsequence if necessary, we assume that the following  is convergent:
$$E\biggl[\int_0^1 L(t,Z_n(t);u_{X_n} (t))dt \biggr].$$
Take a weakly convergent subsequence $\{Q_{n_k}\}_{k\ge 1}$ of $\{Q_{n}\}_{n\ge 1}$,
its weak limit $Q_\infty$, and $Z=(X,Y)\in \mathcal{A}^m(B,P_0;P_1)$ that satisfies (\ref{2.5})--(\ref{2.6}).
By Skorokhod's theorem, on a probability space, there exist $[0,1]\times \mathbb{R}^{2d}\times \mathbb{R}^{d}$--valued random variables $(T_k, Z_k, U_k), k\ge 1$ and
$(T, Z, U)$ such that 
$$P^{(T_k, Z_k, U_k)}=Q_{n_k},\quad k\ge 1,\quad P^{(T, Z, U)}=Q_\infty,$$
$$\lim_{k\to\infty} (T_k, Z_k, U_k)=(T, Z, U),\quad {\rm a.s.}.$$
By Fatou's lemma and Jensen's inequality, from (A1, iii, iv), the following holds:
\begin{eqnarray}\label{4.7.0}
&&\lim_{k\to\infty}E\biggl[\int_0^1 L(t,Z_{n_k}(t);u_{X_{n_k}} (t))dt \biggr]\\
&=&\lim_{k\to\infty}
\int_{[0,1]\times \mathbb{R}^{2d}\times \mathbb{R}^{d}}
L(t,z;u)Q_{n_k}(dt\hbox{ }dz\hbox{ }du)
=\lim_{k\to\infty}E[L(T_k, Z_k;U_k)]\nonumber\\
& \ge &E[L(T, Z; U)]=\int_{[0,1]\times \mathbb{R}^{2d}\times \mathbb{R}^{d}}
L(t,z;u)Q_\infty (dt\hbox{ }dz\hbox{ }du)\nonumber\\
& \ge&\int_{[0,1]\times \mathbb{R}^{2d}}
L(t,z;E^{Q_\infty}[u|t,z])Q_\infty (dt\hbox{ }dz\times \mathbb{R}^d)
=E\biggl[\int_0^1 L(t,Z(t);u_{X} (t))dt \biggr].\nonumber
\end{eqnarray}

\end{proof}

Theorem \ref{thm2.1} can be proven in the same way as Theorem \ref{pp1.1}.
The key is Lemma \ref{lm3.4} which implies (\ref{4.7}) given below.

\begin{proof}[Proof of Theorem \ref{thm2.1}]
$\{Q_n\}_{n\ge 1}$ and $\{Z_n=(X_n, Y_n)\}_{n\ge 1}$ are tight,
from Lemma \ref{lm3.3} and  from assumption.

Take weakly convergent subsequences 
$\{Q_{n_k}\}_{k\ge 1}$ and 
$\{Z_{n_k}\}_{k\ge 1}$
of $\{Q_n \}_{n\ge 1}$ and $\{Z_n\}_{n\ge 1}$, respectively, and let $\overline Q_\infty$ denote the limit of $Q_{n_k} $ as $k\to\infty$.
Then from Lemma \ref{lm3.4},
\begin{equation}
\overline Q_\infty(dt\hbox{ }dz\hbox{ }du)=dt\delta_0(dy)\overline Q_\infty(dt\hbox{ }dx\times \mathbb{R}^d \times du).
\end{equation}
We can prove (\ref{2.13})--(\ref{2.14}) in the same way as Theorem \ref{pp1.1},
from Lemma \ref{lm3.4} and from the following:
for any $f\in C_0^{1,2}([0,1]\times \mathbb{R}^{d})$, 
\begin{eqnarray}\label{4.7}
0&=&\int_{[0,1]\times \mathbb{R}^{2d}} \left\{
\frac{\partial }{\partial t}f(t, x)+
\frac{1}{\gamma}\langle D_x f(t, x),u\rangle\right.\\
&&\qquad \left.
+\frac{1}{2\gamma^2}Trace (a(t, z)D_x^2f(t, x))\right\}
\overline Q_\infty(dt\hbox{ }dz\hbox{ }du)\nonumber
\end{eqnarray}
(see (\ref{4.5.0})).
Indeed,  by the It\^o formula, 
\begin{eqnarray}
0&=&E\left[\int_0^1 \left\{
\frac{\partial }{\partial t}f(t, \overline X_{n_k}(t))+
\frac{1}{\gamma}\langle D_x f(t, \overline X_{n_k}(t)),u_{X_{n_k}(t)}\rangle\right.\right.\nonumber\\
&&\qquad\left.\left.+\frac{1}{2\gamma^2}Trace (a(t, Z_{n_k}(t))D_x^2f(t, \overline X_{n_k}(t))\right\}dt\right]\nonumber
\end{eqnarray}
(see (\ref{3.19.0})).
From Lemma \ref{lm3.4}, 
\begin{eqnarray}\label{4.10}
&&E\left[\int_0^1 \left\{
\left|\frac{\partial }{\partial t}f(t, \overline X_{n_k}(t)) -\frac{\partial }{\partial t}f(t,X_{n_k}(t))\right|
\right.\right.\\
&&\qquad \left.\left.
+\left|D_x^2f(t, \overline X_{n_k}(t))-D_x^2f(t, X_{n_k}(t))\right|\right\}
dt\right]\to 0,\quad k\to\infty.\nonumber
\end{eqnarray}
From (A1, ii), for $R>0$,
\begin{eqnarray}\label{4.11}
&&E\left[\int_0^1 
|D_x f(t, \overline X_{n_k}(t))-D_x f(t, X_{n_k}(t))||u_{X_{n_k}(t)}|dt\right]\\
& \le& R \cdot E\left[\int_0^1 |D_x f(t, \overline X_{n_k}(t))-D_x f(t, X_{n_k}(t))|dt\right]\nonumber\\
&&\qquad +2\sup_{(s,x)\in [0,1]\times \mathbb{R}^d}|D_x f(s,x)|\frac{1}{C_{1,R}
}
E\left[\int_0^1 L(t,Z_{n_k}(t);u_{X_{n_k}(t)})dt\right]
\to 0,\nonumber
\end{eqnarray}
as $k\to\infty$ and then $R\to\infty$.

In the same way as (\ref{4.7.0}), we can prove (\ref{2.15}).
\end{proof}

Lemma \ref{lm3.2} plays a crucial role in the proof of Theorem \ref{thm2.3}.
The key idea that leads to $(X^m, Y^m)\in\mathcal{A}^m(P_0,P_1)$ to approximate 
$X\in\mathcal{A}(P_0,P_1)$ is the following.
For $(\hat X, \hat Y)\in\mathcal{A}^m(P_0,P_1)$,
substitute $t=1$ in (\ref{3.9}). Then 
$$\hat X(1)=\hat X(0)+K^m(1) \hat Y(0)+\int_{0}^{1} K^m(1-s)(u_{\hat X}(s)ds +d W(s)),
$$
which implies that $\hat X(t)$ is close to the following semimartingale:
$$\xi (t)=\hat X(0)+K^m(1) Y(0)+\int_{0}^{t} K^m(1-s)(u_{\hat X}(s)ds +d W(s)).$$
(\ref{2.24.00}) implies the we should consider the time change of $X$ under (A2, i).

\begin{proof}[Proof of Theorem \ref{thm2.3}]

From (\ref{3.23}) and (\ref{4.13.0}), 
\begin{equation}
\quad X^m(1)
=X(\varphi^m(0))+\int_{0}^{1} K^m(1-s)(u_X^m(s)f^m(s)ds +d W^m(s))=X(1)
\end{equation}
since $\varphi^m(1)=1$ (see  (\ref{3.7}) and (\ref{3.27}) for notation).

We prove that $X^m$ converges  to $X$, as $m\to 0$, locally uniformly on $[0,1)$, a.s..
From (\ref{3.26}), for $t\in [0,1)$,
\begin{eqnarray}
&&X^m(t)-X(t)\\
&=&\left(1-\frac{K^m(t)}{K^m(1)}\right)(X(0)-X(t))
-\left(\frac{1}{K^m(1)}-\frac{1}{K^m(1-t)}\right)K^m(t)X(t)\nonumber\\
&&\qquad -\frac{K^m(t)}{K^m(1-t)}(X(t)-X(\varphi^m(t)))\nonumber\\
&&\qquad 
+f^m(t)\left\{\Psi^m\left(\frac{X(\varphi^m(\cdot))}{f^m(\cdot)^2}\right)(t)
-\gamma K^m(t)\frac{X(\varphi^m(t))}{f^m(t)^2}\right\}\nonumber
\end{eqnarray}
since $\gamma K^m (1-t)=f^m(t)$ (see (\ref{2.12}) for notation).

For $\delta\in (0,1)$ and $t\in [0,1]$, considering the cases when $t\in [0,\delta]$ and when  $t\in [\delta,1]$,
\begin{eqnarray}
&&\left|\left(1-\frac{K^m(t)}{K^m(1)}\right)(X(0)-X(t))\right|\\
& \le& \sup_{0\le s\le \delta}|X(0)-X(s)|
+\frac{\exp(-\gamma\delta/m)}{1-\exp(-\gamma/m)}\times 2\|X\|_{\infty}\to 0,\nonumber
\end{eqnarray}
as $m\to 0$ and then $\delta\to 0$, since 
\begin{equation}\label{4.21}
0\le K^m(t)=\frac{1-\exp(-\gamma t/m)}{\gamma}\le K^m(1), \quad 0\le t\le 1.
\end{equation}

Take $t_0\in (0,1)$. For $t\in [0, t_0]$,
\begin{equation}
\left|\left(\frac{1}{K^m(1)}-\frac{1}{K^m(1-t)}\right)K^m(t)X(t)\right|
\le \frac{\exp(-\gamma(1-t_0)/m)}{1-\exp(-\gamma(1-t_0)/m)}\|X\|_\infty\to 0,
\end{equation}
as $m\to 0$,  from (\ref{4.21}), since $t\mapsto K^m(1-t)$ is decreasing.

From Lemma \ref{lm3.5}, 
 for $t\in [0, t_0]$,
\begin{eqnarray}
&&\left|\frac{K^m(t)}{K^m(1-t)}(X(t)-X(\varphi^m(t)))\right|\\
& \le& \frac{1}{1-\exp(-\gamma(1-t_0)/m)}\sup_{0\le s_1\le s_2\le 1, |s_1-s_2|\le 2m/\gamma}|X(s_1)-X(s_2)|\to 0, \nonumber
\end{eqnarray}
as $m\to 0$, from (\ref{4.21}), since $t\mapsto K^m(1-t)$ is decreasing.

For $t\in [0, t_0]$ and $\delta\in (0,1)$, from (\ref{3.3}) and Lemma \ref{lm3.5},
\begin{eqnarray}
&&\left|\Psi^m\left(\frac{X(\varphi^m(\cdot))}{f^m(\cdot)^2}\right)(t)
-\gamma K^m(t)\frac{X(\varphi^m(t))}{f^m(t)^2}\right|\label{3.34}\\
& \le& 2\left\|\frac{X(\varphi^m(\cdot))}{f^m(\cdot)^2}\right\|_{\infty,t_0}\exp\left (-\frac{\gamma\delta}{m} \right)
+\sup_{t-t\wedge\delta\le s\le t}\left|\frac{1}{f^m(s)^2}-\frac{1}{f^m(t)^2}\right|
| X(\varphi^m(s))|\nonumber\\
&&\qquad +\sup_{t-t\wedge\delta\le s\le t}\left|\frac{X(\varphi^m(s))-X(\varphi^m(t))}{f^m(t)^2}\right|\nonumber\\
& \le& 2\|X\|_\infty\left(\frac{1}{f^m(t_0)^2}\exp\left (-\frac{\gamma\delta}{m} \right)
+\frac{1}{f^m(t_0)^4}\exp\left(-\frac{\gamma(1-t_0)}{m}\right)\right)\nonumber\\
&&\qquad +\frac{1}{f^m(t_0)^2}\sup_{0\le s_1\le s_2\le 1, |s_1-s_2|\le \delta+2m/\gamma}
|X(s_1)-X(s_2)|\to 0,\nonumber
\end{eqnarray}
as $m\to 0$ and then $\delta\to 0$.
Indeed,  $t\mapsto f^m (t)=1-\exp (-\gamma (1-t)/m)$ is decreasing and 
$$\left|\frac{1}{f^m(s)^2}-\frac{1}{f^m(t)^2}\right|\le \frac{1}{f^m(t)^4}\times 2\exp\left(-\frac{\gamma (1-t)}{m}\right),\quad 0\le s\le t\le 1.$$

We   prove that
$Y^m$ converges  to $0$, as $m\to 0$, locally uniformly on $[0,1)$, a.s..
Take $t_0\in (0,1)$. 
Then, from (\ref{2.27}) and (\ref{3.34}), we only have to prove, to complete the proof,  that the following converges to $0$ uniformly in $t\in [0, t_0]$, as $m\to 0$:
\begin{eqnarray}\label{4.24}
&&-\frac{\exp(-\gamma t/m)}{K^m(1)}X(0)+
\frac{X(\varphi^m(t))}{K^m(1-t)}-\gamma^2K^m(t)\frac{X(\varphi^m (t))}{f^m(t)^2}\\
&=  &\frac{\exp(-\gamma t/m)}{K^m(1)}(X(\varphi^m(t))-X(0))\nonumber\\
&&\qquad +
\frac{(K^m(1-t)-K^m(t))K^m(1)-K^m(1-t)^2\exp(-\gamma t/m)}{K^m(1-t)^2K^m(1)}X(\varphi^m(t)),\nonumber
\end{eqnarray}
from (\ref{2.12}). 
We prove that (\ref{4.24}) converges to $0$ uniformly in $t\in [0, t_0]$, as $m\to 0$, a.s..
For $t\in [0, t_0]$ and $\delta\in (0,t_0)$, from Lemma \ref{lm3.5},
considering the cases when $t\in [0,\delta]$ and when  $t\in [\delta,t_0]$,
\begin{eqnarray}
&&\frac{\exp(-\gamma t/m)}{K^m(1)}|X(\varphi^m(t))-X(0)|\\
&\le&  \frac{1}{K^m(1)}\sup_{0\le s\le\min(\delta+2m/\gamma,1)} |X(s)-X(0)|
+\frac{\exp(-\gamma \delta/m)}{K^m(1)}\times 2\|X\|_\infty
\to 0,\nonumber
\end{eqnarray}
as $m\to 0$ and then $\delta\to 0$.
\begin{eqnarray}
&&\left|\frac{(K^m(1-t)-K^m(t))K^m(1)-K^m(1-t)^2\exp(-\gamma t/m)}{K^m(1-t)^2K^m(1)}X(\varphi^m(t))\right|\qquad\quad\\
&\le& 
\frac{\exp(-\gamma (1-t_0)/m)}{\gamma^2K^m(1-t_0)^3}\|X\|_\infty \to 0,\nonumber
\end{eqnarray}
as $m\to 0$,  since
\begin{eqnarray*}
&&\gamma^2|(K^m(1-t)-K^m(t))K^m(1)-K^m(1-t)^2\exp(-\gamma t/m)|\\
&=&\exp\left(-\frac{\gamma (1-t)}{m}\right)+\exp\left(-\frac{\gamma (1+t)}{m}\right)
-2\exp\left(-\frac{\gamma }{m}\right)\le \exp\left(-\frac{\gamma (1-t)}{m}\right),
\end{eqnarray*}
and since $t\mapsto K^m(1-t)$ is decreasing. 
Here, notice that $x\mapsto \exp (-x)$ is convex.

We prove (\ref{2.29}).
From (A2, iii),
\begin{eqnarray}
&&E\left[\int_0^1L(t, Z^m(t);u_X^m(t)f^m(t))dt\right]\label{4.28}\\
& \le& 
E\left[\int_0^1R_1(t, Z^m(t); u_X(\varphi^m(t)))f^m(t)^2dt\right]
+E\left[\int_0^1L(t, Z^m(t); 0)dt\right]\nonumber\\
&=&
E\left[\int_{\varphi^m(0)}^1L((\varphi^m)^{-1}(s), Z^m((\varphi^m)^{-1}(s));u_X(s))ds\right]\nonumber\\
&&\qquad +E\left[\int_0^1(-f^m(t)^2+1)L(t, Z^m(t); 0)dt\right].\nonumber
\end{eqnarray}

From Lemma \ref{lm3.5}, for $s\in [\varphi^m(0),1]$,
\begin{eqnarray}\label{4.13}
&&L((\varphi^m)^{-1}(s), Z^m((\varphi^m)^{-1}(s));u_X(s))
\\
& \le& \Delta L\left(\frac{2m}{\gamma},\infty\right)
(1+L(s, X(s), 0; u_X(s)))+L(s, X(s), 0;u_X(s)).\nonumber
\end{eqnarray}
From (A2, ii, iv), $L(t, Z^m(t); 0)$ is bounded (see Remark \ref{rk2.1}, (iii)).
(\ref{2.29}) holds from (A2, iv).

We prove (\ref{2.30}).
From Lemmas \ref{lm3.5} and what we have proven above, 
$$((\varphi^m)^{-1}(s), X^m((\varphi^m)^{-1}(s)), Y^m((\varphi^m)^{-1}(s)))\to (s, X(s), 0), \quad m\to 0,$$
locally uniformly in $s\in (0,1)$, a.s..
$\varphi^m(0)\to 0$, as $m\to 0$.
$L(t, Z^m(t); 0)$ is bounded, and 
\begin{eqnarray*}
0&\le& -f^m(t)^2+1\le 1,  \quad 0\le t<1,\\
1-f^m(t)^2&\to& 0, \quad m\to 0, \quad 0\le t<1.
\end{eqnarray*}
(A2, ii) and (\ref{4.13}) complete the proof by Lebesgue's dominated convergence theorem.

\end{proof}

We prove Corollary \ref{thm2.2}
by showing that $P^{Y^m(0)}\in \mathcal{P}_{1,C(m,P_0,P_1)}(\mathbb{R}^d)$ for $X\in\mathcal{A}(P_0;P_1)$ for which $E\left[\int_0^1L_0(t,X(t);u_X(t))dt\right]$ is sufficiently small.
 
\begin{proof}[Proof of Corollary \ref{thm2.2}]
Take $X\in\mathcal{A}(P_0;P_1)$ such that 
$$E\left[\int_0^1L_0(t,X(t);u_X(t))dt\right]\le V^0(P_0,P_1)+1.$$
Then,  
\begin{equation}
E[|X(\varphi^m (0))-X(0)|]\le  K^m(1)C(m,P_0,P_1), \quad m>0,
\end{equation}
and $Z^m=(X^m, Y^m)\in \mathcal{A}^m(P_0, P_1)$ defined by  (\ref{2.21})--(\ref{2.24})
belongs to $\mathcal{A}^m(\mathcal{P}_{1,C(m,P_0,P_1)}(\mathbb{R}^d),P_0;P_1)$ for $m>0$.
Indeed, for $R>0$,
\begin{eqnarray*}
&&E[|X(\varphi^m (0))-X(0)|]\\
&\le& \frac{1}{\gamma}\left\{\int_0^{\varphi^m (0)}E[|u_X(t)|]dt+E[|W(\varphi^m (0))|]\right\}\\
&\le& \frac{1}{\gamma}\left\{\varphi^m (0) R+\frac{1}{C_{1,R}}E
\left[\int_0^1L_0(t,X(t);u_X(t))dt\right]+E[|W(\varphi^m (0))|]\right\}.
\end{eqnarray*}

Since $\mathcal{P}_{1,C(m,P_0,P_1)}(\mathbb{R}^d)\subset D_m$, for $m\in (0,
\varepsilon_0\gamma/2]$, from (\ref{2.29}), 
\begin{eqnarray}
V^m(D_m,P_0;P_1)&\le &V^m(\mathcal{P}_{1,C(m,P_0,P_1)}(\mathbb{R}^d),P_0;P_1)\\
&\le&
\sup_{\overline m\in (0, \varepsilon_0\gamma/2]}E\left[\int_0^1L(t, Z^{\overline m}(t);u_X^{\overline m}(t)f^{\overline m}(t))dt\right]<\infty.\nonumber
\end{eqnarray}
(\ref{2.30}) completes the proof.
\end{proof}

In the proof of Theorem\ref{thm2.6},
we show that  $(\mathcal{X}^m,\mathcal{Y}^m)$ defined by (\ref{2.42})--(\ref{2.43})
is close to $(X^m, Y^m )$ defined by (\ref{2.21})--(\ref{2.24}) as $m\to 0$ and make use of 
Theorem \ref{thm2.3}.

\begin{proof}[Proof of Theorem\ref{thm2.6}]
From  (\ref{2.42})--(\ref{2.43}), (\ref{3.9}), (\ref{3.11}), (\ref{4.13.0}), and (\ref{4.19}), 
\begin{eqnarray}
\mathcal{X}^m(t)
 &=&X(0)+K^m(t)\mathcal{Y}^m(0)\label{4.35}\\
 &&\qquad +\int_0^tK^m(t-s)
\left(\left(u_{X}^m(s)
+\beta^m(0)\right)f^m(s)ds+dW^m(s)\right)\nonumber\\
&=&X^m(t)-K^m(t)\frac{X(\varphi^m(0))-X(0)-K^m(1)\mathcal{Y}^m(0)}{K^m(1)}\nonumber\\
&&\qquad +\int_0^tK^m(t-s)f^m(s)ds\cdot \beta^m(0),\nonumber\\
\mathcal{Y}^m(t)
&=&\exp\left(-\frac{\gamma t}{m}\right)\mathcal{Y}^m(0)+
\int_{0}^{t}\exp\left(-\frac{\gamma (t-s)}{m}\right)\label{4.27}\\
&&\qquad\times \left(\left(u_{X}^m(s)+\beta^m(0)\right)f^m(s)ds +dW^m(s)\right)\nonumber\\
&=&Y^m(t)-\exp\left(-\frac{\gamma t}{m}\right)\frac{X(\varphi^m(0))-X(0)-K^m(1)\mathcal{Y}^m(0)}{K^m(1)}\nonumber\\
&&\qquad +\int_{0}^{t}\exp\left(-\frac{\gamma (t-s)}{m}\right)f^m(s)ds\cdot \beta^m(0).\nonumber
\end{eqnarray}
From (\ref{2.29.00}) and (\ref{4.35}), 
\begin{equation}
\mathcal{X}^m(1)=X^m(1)=X(1),\quad \mathcal{Z}^m=(\mathcal{X}^m, \mathcal{Y}^m)\in \mathcal{A}^m(P_0,P_1)
\end{equation}
(see (\ref{2.12}), (\ref{3.27}), and (\ref{2.44}) for notation).
Indeed,
$$\int_0^1K^m(1-s)f^m(s)ds=\frac{1-\varphi^m(0)}{\gamma}.$$

From Theorem \ref{thm2.3}, Lemma \ref{lm3.5}, and (\ref{4.35})--(\ref{4.27}),
$(\mathcal{X}^m,\mathcal{Y}^m)$ converges  to $(X,0)$, as $m\to 0$, locally uniformly on $[0,1)$, a.s..
Indeed,  as $m\to0$,
\begin{eqnarray}
\quad \|\mathcal{X}^m-X^m\|_\infty&\le& 2(|X(\varphi^m(0))-X(0)|+
\gamma^{-1}|\mathcal{Y}^m(0)|)\to 0, \\
\|\mathcal{Y}^m-Y^m\|_\infty&\le& \left(\frac{1}{K^m(1)}+\frac{\gamma}{1-\varphi ^m(0)}\right)
 (|X(\varphi^m(0))-X(0)|+\gamma^{-1}|\mathcal{Y}^m(0)|)\nonumber\\
 \\
 &\to& 0, \nonumber
 \end{eqnarray}
since 
$$0\le K^m(t)\le K^m(1)<\frac{1}{\gamma}, \quad \gamma K^m(t-s)\le f^m(s)\le 1.$$

From (A2, iii),
\begin{eqnarray}\label{4.35.1}
&&E\left[\int_0^1L\left(t, \mathcal{Z}^m(t);\left(u_{X}^m(t)+\beta^m(0)\right)f^m(t)\right)dt\right]\\
& \le& 
E\left[\int_0^1R_1\left(t, \mathcal{Z}^m(t); u_X(\varphi^m(t))+\beta^m(0)\right)f^m(t)^2dt\right]
+E\left[\int_0^1L(t, \mathcal{Z}^m(t); 0)dt\right]\nonumber\\
&=&
E\left[\int_{0}^1L\left(t, \mathcal{Z}^m(t); u_X(\varphi^m(t))+\beta^m(0)\right)f^m(t)^2dt\right]\nonumber\\
&&\qquad +E\left[\int_0^1(-f^m(t)^2+1)L(t, \mathcal{Z}^m(t); 0)dt\right].\nonumber
\end{eqnarray}
From (A4),
\begin{eqnarray}
&&L\left(t, \mathcal{Z}^m(t); u_X(\varphi^m(t))+\beta^m(0)\right)\\
&\le&L\left(t, \mathcal{Z}^m(t); u_X(\varphi^m(t))\right)
+C|\beta^m(0)|\left(|u_X(\varphi^m(t))|^{r_0-1}
+\left|\beta^m(0)\right|^{r_0-1}\right).\nonumber
\end{eqnarray}
By H\"older's inequality,
\begin{eqnarray}
&&E\left[|\beta^m(0)| \int_0^1|u_X(\varphi^m(t))|^{r_0-1}f^m(t)^2dt\right]\\
&=&E\left[|\beta^m(0)| \int_{\varphi^m(0)}^1|u_X(s)|^{r_0-1}ds\right]\nonumber\\
&\le&\{E [|\beta^m(0)|^{r_0}]\}^{1/r_0}
\left \{E\left[\int_{\varphi^m(0)}^1|u_X(s)|^{r_0}ds\right]\right\}^{(r_0-1)/r_0}.\nonumber
\end{eqnarray}
\begin{eqnarray}
&&
\left|1-\varphi ^m(0)\right|^{r_0}
E\left[\left|\beta^m(0)\right|^{r_0}\right]\\
&\le & 3^{r_0-1}
E\left[\left(\int_0^{\varphi^m(0)}|u_X(s)|ds\right)^{r_0}+|W(\varphi^m(0))|^{r_0}
+|\mathcal{Y}^m(0)|^{r_0}\right]\nonumber\\
&\le & 3^{r_0-1}
E\left[\varphi^m(0)^{r_0-1}\int_0^{\varphi^m(0)}|u_X(s)|^{r_0}ds
+\varphi^m(0)^{r_0/2}|W(1)|^{r_0}
+|\mathcal{Y}^m(0)|^{r_0}\right].\nonumber
\end{eqnarray}
For sufficiently large $R>0$,
\begin{eqnarray}
&&E\left[\int_0^{\varphi^m(0)}|u_X(s)|^{r_0}ds\right]\\
&\le & \left(\varphi^m(0)R^{r_0}+\frac{1}{C_{r_0,R}}E\left[\int_{0}^{\varphi^m(0)}L_0(t, X(t);u_X(t))dt\right]\right) \to0, \quad m\to 0,\nonumber
\end{eqnarray}
\begin{equation}\label{4.40}
E\left[\int_{\varphi^m(0)}^1|u_X(s)|^{r_0}ds\right]
\le  R^{r_0}+\frac{1}{C_{r_0, R}}E\left[\int_{0}^1L_0(t, X(t);u_X(t))dt\right]<\infty.
\end{equation}

From (\ref{4.35.1})--(\ref{4.40}),  (\ref{2.48})--(\ref{2.49}) can be proven in the same way as Theorem \ref{thm2.3}.
\end{proof}

From Lemma \ref{lm3.3}, in the same way as Theorem \ref{pp1.1},
we can prove Proposition \ref{pp2.2}.

\begin{proof}[Proof of Proposition \ref{pp2.2}]
From Theorem 2.2.15 and Lemma 3.2.3 in \cite{DS}, the convex duality does the proof.
We only have to prove that $P\mapsto V^m(B,P_0;P)$ is convex and lower semicontinuous.

We first prove that $P\mapsto V^m(B,P_0;P)$ is convex.
For any  $Z_n=(X_n, Y_n)\in \mathcal{A}^m(B, P_0;P^{X_n(1)})$ such that $E[\int_0^1 |u_{X_n} (t)|dt]$ are finite, $n=0,1$,  and  for any $\lambda\in (0,1)$, let 
$$Q_\lambda(dt\hbox{ }dz\hbox{ }du):=(1-\lambda) dt P^{(Z_0(t),u_{X_0} (t))}(dz\hbox{ }du)
 +\lambda dt P^{(Z_1(t),u_{X_1} (t))}(dz\hbox{ }du).
$$
Then, in the same way as the proof of Theorem \ref{pp1.1},
 there exists $Z_\lambda \in \mathcal{A}^m$ such that 
\begin{eqnarray}
u_{X_\lambda} (t)&=&
E^{Q_\lambda}[u|t,Z_\lambda(t)],\\
P^{Z_\lambda(t)}(dz)&=&(1-\lambda) P^{Z_0(t)}(dz)+\lambda P^{Z_1(t)}(dz),
\quad 0\le t\le 1
\end{eqnarray}
by Bogachev--R\"ockner--Shaposhnikov's superposition principle (see \cite{Bog21}, Theorem 1).
Here
$E^{Q_\lambda}[u|t,z]$ denotes the conditional expectation of $u$ given $(t,z)$ under $Q_\lambda$.
Indeed, replacing $P^{Z_\infty (t)}$ and $Q_\infty$ by $(1-\lambda) P^{Z_0(t)}+\lambda P^{Z_1(t)}$
and $Q_\lambda$, respectively, (\ref{4.3.0}) holds by the It\^o formula.
Since $B$ is convex, $Z_\lambda\in \mathcal{A}^m(B, P_0;(1-\lambda) P^{X_0(1)}
+\lambda  P^{X_1(1)})$.
In particular, the following holds and implies that $P\mapsto V^m(B,P_0;P)$ is convex:
\begin{eqnarray}
&&(1-\lambda) E\biggl[\int_0^1 L(t,Z_0(t);u_{X_0} (t))dt \biggr]
+\lambda E\biggl[\int_0^1 L(t,Z_1(t);u_{X_1} (t))dt \biggr]\\
&=&\int_{[0,1]\times \mathbb{R}^{2d}\times \mathbb{R}^{d} }
L(t,z;u)Q_\lambda(dt\hbox{ }dz\hbox{ }du)\nonumber\\
&\ge& \int_{[0,1]\times \mathbb{R}^{2d}}
L(t,z;E^{Q_\lambda}[u|t,z])Q_\lambda(dt\hbox{ }dz\times \mathbb{R}^{d} )
=E\biggl[\int_0^1 L(t,Z_\lambda(t);u_{X_\lambda} (t))dt \biggr],\nonumber
\end{eqnarray}
by Jensen's inequality.
From Lemma \ref{lm3.3}, in the same way as Theorem \ref{pp1.1},
one can also show the lower--semicontinuity of $P\mapsto V^m(B,P_0;P)$.
\end{proof}

For $Z=(X,Y)\in \mathcal{A}^m$,
the SDE for $X(t)+K^m(1-t)Y(t)$ plays a crucial role in the proof of Proposition \ref{pp2.3}.

\begin{proof}[Proof of Proposition \ref{pp2.3}]
For $Z=(X,Y)\in \mathcal{A}^m(P_0,P^{X(1)})$, let
\begin{equation}\label{2.34}
\eta(t):=X(t)+K^m(1-t)Y(t)
\end{equation}
(see (\ref{3.7}) for notation).
Then
\begin{eqnarray}
d\eta (t)&=&K^m(1-t)(u_X(t)dt+dW(t)),\label{2.35}\\
\eta(1)&=&X(1).\nonumber
\end{eqnarray}
Indeed, by the It\^o formula, for $t\in (0,1)$,
\begin{eqnarray*}
d\{X(t)+K^m(1-t)Y(t)\}
&=&\frac{1}{m}Y(t)dt-\frac{1}{m}\exp\left(-\frac{\gamma}{m}(1-t)\right)Y(t)dt\\
&&\qquad +K^m(1-t)\left\{\left(u_X(t)-\frac{\gamma }{m}Y(t)\right)dt+dW(t)\right\}.
\end{eqnarray*}
From (\ref{2.35}), $u_\eta =u_X$, and 
\begin{equation}
E\biggl[f(X (1))-\int_0^1 L(t;u_X(t))dt \biggr]
=E\biggl[f(\eta (1))-\int_0^1 L(t;u_\eta(t))dt \biggr],
\end{equation}
which implies that the l.h.s. is less than or equal to the r.h.s. in (\ref{2.34.1}).

Suppose that $\{\eta (t)\}_{0\le t\le 1}$ be a semimartingale defined on a complete filtered probability space such that the following holds:
\begin{equation}
d\eta (t)=K^m(1-t)\left\{u_\eta(t)dt+dW(t)\right\}.
\end{equation}
Let $X_0$  be an $\mathbb{R}^d$--valued random variable defined
on the same probability space as  $\{\eta (t)\}_{0\le t\le 1}$ such that $P^{X_0}=P_0$.
Let $Z=(X,Y)$ be a solution to (\ref{1.1})--(\ref{1.2}) such that 
$$
u_X(t):=u_\eta(t),\quad X(0)=X_0, \quad Y(0)=\frac{\eta(0)-X_0}{K^m(1)}.$$
Then
\begin{eqnarray}
d\eta (t)&=&K^m(1-t)\left\{u_X(t)dt+dW(t)\right\},\label{4.37}\\
\eta(1)&=&X(1).\nonumber
\end{eqnarray}
Indeed, from (\ref{2.34})--(\ref{2.35}), 
\begin{eqnarray*}
X(1)&=&X(1)+K^m(1-1)Y(1)\\
&=&X(0)+K^m(1-0)Y(0)
+\int_0^1 d(X(t)+K^m(1-t)Y(t))\\
&=&\eta(0)+\int_0^1 K^m(1-t)\left\{u_X(t)dt+dW(t)\right\}\\
&=&\eta(1).
\end{eqnarray*}
From (\ref{4.37}),
\begin{equation}
E\biggl[f(\eta (1))-\int_0^1 L(t;u_\eta(t))dt \biggr]
=E\biggl[f(X (1))-\int_0^1 L(t;u_X(t))dt \biggr].
\end{equation}
Since $P^{X(0)}=P^{X_0}=P_0$,
the l.h.s. is greater than or equal to the r.h.s. in  (\ref{2.34.1}).

If $L=|u|^2/2$, then $V^0(P_0,P_1)$ is Schr\"odinger's problem  (see section \ref{sec:1}).
If $V^m(P_0,P_1)$  converges to $V^0(P_0,P_1)$ as $m\to 0$,
then $V^0(P_0,P_1)$ does not depend on $P_0$ either, which is a contradiction
 (see \cite{M2021} for more general SOTs).
 
\end{proof}


\end{document}